\def\reserveinserts#1{}
\documentclass[HARVARD,LATO1COL]{WileyNJDv5}

\usepackage[utf8]{inputenc} 

\articletype{Preprint}%

\usepackage{pgfplots}
\usepackage{comment}

\usepackage{multirow}
\usepackage{multicol}

\historydates{}
\doiheadtext{}

\makeatletter
\def\oddfoot@titlepage@info{}
\def\evenfoot@titlepage@info{}
\makeatother

\begin{document}

\title{Revisiting recursive methods for Dyson and Keldysh in NEGF: Part I}

\author[1]{Edoardo di Napoli}

\author[2]{Alessandro Pecchia}

\author[1]{Gustavo Ramirez-Hidalgo$^{*}$}

\authormark{RAMIREZ-HIDALGO \textsc{et al.}}
\titlemark{Revisiting recursive methods for Dyson and Keldysh in NEGF: Part I}


\address[1]{\orgdiv{J{\"u}lich Supercomputing Centre}, \orgname{Forschungszentrum J{\"u}lich GmbH}, \orgaddress{Wilhelm-Johnen-Stra\ss{}e 52428 J\"ulich, \country{Germany}}}

\address[2]{\orgdiv{CNR-ISMN}, \orgname{Consiglio Nazionale delle Ricerche}, \orgaddress{\state{Monterotondo}, \country{Italy}}}

\corres{$^{*}$Corresponding author: Gustavo Ramirez-Hidalgo. \email{g.ramirez.hidalgo@fz-juelich.de}}


\abstract[Abstract]{The simulation of quantum transport in nanodevices requires the solution of the Dyson and Keldysh equations, a task dominated by the inversion of massive, block-tridiagonal matrices. While the Recursive Green's Function (RGF) method has long been the standard $O(N)$ solver for quasi-1D systems, its formulation has typically been restricted to sequential execution and nearest-neighbor interactions. In this work, we carefully reformulate RGF through the lens of Domain Decomposition and Schur Complement theory. This allows us to extend the recursive formalism to block $n$-diagonal systems (handling higher-order stencils) and to derive a parallel algorithm, Domain-Decomposition based RGF (DDRGF), which stitches macroscopic domains via reduced interface systems. We explore data dependencies in DDRGF in detail, by means of block-sparse structures and tracing back to the desired output as a block tridiagonal approximation, giving a clear, reproducible and extensible formulation.
We validate these algorithms using \texttt{LibNEGF.jl}, a Julia-based implementation, demonstrating that the structural insights of domain decomposition provide a robust pathway for high-performance quantum transport simulations on modern multi-core clusters. The theory presented here lays down the base for tackling the Keldysh problem, to be similarly handled in future stages of our work. Although the target here is the acceleration of kernels in the non-equilibrium Green's function method, the algorithms and the implementations presented can be immediately used in any application involving block $n$-diagonal systems.}

\keywords{Dyson, Keldysh, NEGF, recursive inversions, block sparse matrices}

\jnlcitation{}


\maketitle

\renewcommand\thefootnote{}
\footnotetext{\textbf{Abbreviations:} NEGF, non-equilibrium Green's function; RGF, recursive Green's function.}

\renewcommand\thefootnote{\fnsymbol{footnote}}
\setcounter{footnote}{1}

\section{Introduction}

The Non-Equilibrium Green's Function (NEGF) formalism stands as the \textit{de facto} standard for modeling quantum transport in nanodevices, ranging from post-silicon transistors to molecular junctions \cite{Luisier2006, Datta2005}. As device scaling reaches the atomic limit, the computational burden of these simulations grows severely. The central bottleneck lies in solving the Dyson equation for the retarded Green's function, $G^R(E) = (E I - H - \Sigma^R)^{-1}$, and the Keldysh equation for the correlation function, $G^< = G^R \Sigma^< G^A$ \cite{Vetsch2025}. For realistic systems comprising tens of thousands of atoms, direct dense inversion with $O(N^3)$ scaling is prohibitive.\\
For decades, the specific sparsity structure of quasi-1D nanowires---typically block-tridiagonal---has been exploited by the recursive Green's function (RGF) algorithm to achieve linear $O(N)$ scaling \cite{d1990conductance,mackinnon1985calculation,svizhenko2002two,svizhenko2002role}. However, the classical RGF formulation faces two main and distinct challenges in the exascale era. First, it is inherently sequential, creating a scalability wall on modern supercomputing nodes \cite{Drouvelis2006, Petersen2009}. Second, it is rigidly tied to nearest-neighbor interactions; extending it to higher-order discretizations (block $n$-diagonal) often requires artificial block enlargement, inflating arithmetic and memory costs \cite{Kilic2013, Jain2007}.\\
In the realm of numerical linear algebra, significant progress has been made to generalize transport solvers. Techniques such as parallel Selected Inversion (P-SelInv) \cite{Jacquelin2016} and block-tridiagonal solvers on GPUs (Serinv) \cite{Maillou2025} have been recently developed to take advantage of large-scale and heterogeneous machines. Similarly, nested dissection approaches \cite{Li2008, Zhao2018, Hetmaniuk2013} and hierarchical matrix compressions \cite{Chavez2018} have pushed the boundaries of large-scale electrostatics.
Despite these advances, a disconnection remains between the high-performance numerical linear algebra community and the physics-driven need for non-equilibrium observables. In this paper, we revisit the recursive algorithms for NEGF with the specific aim of bridging this gap. Our contribution is threefold:
\begin{enumerate}
    \item We reformulate the RGF method explicitly within the framework of domain decomposition. By identifying the recursive steps as operations on Schur complements of interface separators, we generalize the algorithm to handle block $n$-diagonal systems naturally. This allows for efficient treatment of next-nearest-neighbor tight-binding models without unnecessary block inflation.
    \item We introduce DDRGF (Domain-Decomposition RGF), a parallel algorithm that partitions the device into macroscopic domains. Although it has similarities with previous parallelization strategies, e.g., \cite{Petersen2009,Maillou2025}, it is novel in its structural presentation, allowing for easy reproducibility while being extensible to stencils beyond nearest-neighbors. This structural description enables a transparent formalization of the data dependencies within the method. Furthermore, by having a domain decomposition which is a \textit{refinement} of the process decomposition, we can study strong scaling at fixed total complexity.
    \item We write down a very detailed cost model for DDRGF based on its clear structural description. By means of this model, the concurrency-to-complexity trade-off is immediately exposed, allowing a deeper understanding of the parallelization strategy and tuning it for optimal performance in an automated way.
\end{enumerate}
The algorithms presented here are implemented in \texttt{LibNEGF.jl} \cite{LibNEGF_jl_2026_20043546}, a high-performance Julia library. We validate the accuracy and scaling of our approach, demonstrating that reformulating transport problems through the lens of domain decomposition not only clarifies the underlying linear algebra but also unlocks new pathways for parallelism and algorithmic extension. Our unified approach also serves as the theoretical base for realizing similar extensions on the Keldysh problem, which will be part of subsequent stages of our work. Moreover, the high-performance implementations in \texttt{LibNEGF.jl} are the first steps towards a publicly available library that can be easily integrated into workflows dealing with block banded systems, to be equipped with various levels of parallelism and supporting several GPU backends.\\
The structure of this work is as follows. We formalize and extend the RGF method in sect.\ \ref{sec:LDU_and_RGF}, first by making clear the connection between domain decomposition and the $LDU$ factorization, and then by applying this relation recursively, enabling the solution of block $n$-diagonal systems with $n \geq 3$ via RGF, sect.\ \ref{sec:RGF_block_ndiagonal}. Then, clusterization of the principal layers via domain decomposition leads to the introduction of the DDRGF method, sect.\ \ref{sec:DDRGF}, making evident the unavoidable trade-off between concurrency and complexity when parallelizing the RGF method. Cost models are written for all these methods for understanding their scaling, but also to put their pros and cons into perspective. Finally, extensive numerical methods are presented in sect.\ \ref{sec:NumericalExperiments}, with a discussion on why these parallelization strategies are necessary, but also on what are their limitations.

\section{The LDU factorization and the recursive Green's function method}\label{sec:LDU_and_RGF}
Let us, in general, define the argument of the inversion in the Dyson equation to be a matrix $M := EI - H - \Sigma^{r}$, with $H,\Sigma^{r} \in \mathbb{C}^{n \times n}$ called \textit{Hamiltonian} and \textit{self-energy}, respectively, and $E \in \mathbb{R}$ an energy. We restrict ourselves to a physical system composed of a set of concatenated regions placed in a one-dimensional fashion, where each region has a certain number of interacting degrees of freedom.  In NEGF, the approximations are such that $M$, which models this one-dimensional arrange of regions, is block $n$-diagonal. The Dyson problem in NEGF consists of computing $\textrm{b}n{diag}(M^{-1})$, i.e., extracting the block $n$-diagonal part of the inverse of the block $n$-diagonal matrix $M$.\\
Let us assume that $M$ admits a block $LDU$ decomposition, where $L$ and $U$ are block lower- and upper-triangular, respectively, and $D$ is block diagonal, with all of $L$, $D$ and $U$ being square. To admit this decomposition, it is required that the pivots appearing in the block Gaussian elimination process are nonzero, which is fulfilled if and only if all of its leading principal minors up to order $n-1$ are nonzero. We assume from here on that the matrices appearing in NEGF computations comply with these requirements, due to their physical nature\footnote{We use synthetic matrices in our numerical experiments, sect.\ \ref{sec:NumericalExperiments}, but we ensure the compliance with these conditions for having invertible systems.}.
We start this section by establishing the relationship between the $LDU$ factorization and domain decomposition. This is followed by the use of this connection to re-derive the RGF method to solve the Dyson equation on block tridiagonal system matrices, in sect.\ \ref{sec:RGF_block_tridiagonal}. Then, we extend the RGF method in sect.\ \ref{sec:RGF_block_ndiagonal} and show how to use it on block $n$-diagonal matrices with $n \geq 3$. The approach followed in this section serves as the base for transforming the RGF method to expose concurrency, see sect.\ \ref{sec:DDRGF}, leading to the DDRGF method.

\subsection{LDU and domain decomposition}\label{sec:RGF_ldu_and_domain_decomposition}
We start by connecting domain decomposition to the LDU factorization, which is a simple but useful association that helps in understanding and extending the RGF method.
In general, the underlying degrees of freedom of the physical system associated to any given matrix $M$ can be separated into two non-overlapping but interacting domains. Let us assume we are given a matrix $\widetilde{M}$ and we possibly reorder its degrees of freedom via a permutation matrix $R$, i.e., we work with the possibly transformed matrix $M = R \widetilde{M} R^{T}$. We then group the degrees of freedom of the system in two non-overlapping but interacting regions, 1 and 2, leading to the following block structure for $M$:
$$ M = \left(
\begin{array}{c c}
    M_{11} & M_{12}   \\
    M_{21} & M_{22}
\end{array}
\right). $$
Note that the operator $R$ is the identity in sect.s \ref{sec:RGF_block_tridiagonal} and \ref{sec:RGF_block_ndiagonal}, but different from the identity in sect.\ \ref{sec:DDRGF}. We can then perform an UDL decomposition on $M$, with $U$ and $L$ block upper- and lower-triangular and $D$ block diagonal, and furthermore take its inverse:
\begin{equation} \label{eq:2x2_UDL_factorization}
M = 
\begin{pmatrix}
I & M_{12}M_{22}^{-1}\\
0 & I
\end{pmatrix}
\begin{pmatrix}
M_{S} & 0\\
0     & M_{22}
\end{pmatrix}
\begin{pmatrix}
I                 & 0\\
M_{22}^{-1}M_{21} & I
\end{pmatrix}, \ \ \ M_{S} = M_{11} - M_{12}M_{22}^{-1}M_{21},
\end{equation}
\begin{equation} \label{eq:2x2_explicit_inverse}
M^{-1} = 
\begin{pmatrix}
I                  & 0\\
-M_{22}^{-1}M_{21} & I
\end{pmatrix}
\begin{pmatrix}
M_{S}^{-1} & 0\\
0     & M_{22}^{-1}
\end{pmatrix}
\begin{pmatrix}
I & -M_{12}M_{22}^{-1}\\
0 & I
\end{pmatrix}.
\end{equation}
In the next section we make use of this factorization to rederive the RGF method.

\subsection{Recursive Green's function method for block tridiagonal systems}\label{sec:RGF_block_tridiagonal}
Let us work in this section under the assumption that any two of the regions concatenated in a one-dimensional way interact only if they are next to each other. This translates to a block tridiagonal matrix in the NEGF formalism. Let us then replace the general label $M$ by a more specific $T$, to indicate the block tridiagonal matrix modeling the system. We can make the block-sparse structure of $T$ more clear by writing it as
\begin{equation}\label{eq:parametrization_M_tridiagonal}
    T_{ab} = T_{a}^{[-1]}\delta_{a-1,b} + T_{a}^{[0]}\delta_{a,b} + T_{a}^{[+1]}\delta_{a+1,b}, \ a,b = 1, 2, ..., \ell,
\end{equation}
where $T_{a}^{[-1]}$, $T_{a}^{[0]}$ and $T_{a}^{[+1]}$ are the left, center and right dense matrices populating the block row $a$ of $T$, $\delta$ is the delta Kronecker and we have $\ell$ concatenated regions, called \textit{principal layers} in the NEGF formalism. This formulation of $T$ will be useful later in this section and the next one, and even more later in sect.\ \ref{sec:DDRGF}.\\
We make use now of the interplay of domain decomposition and the LDU factorization in the derivation of RGF, by means of a $4{\times}4$ example.
This matrix corresponds to a physical system divided in four principal layers $\{1,2,3,4\}$, with nearest neighbor interactions among the layers:
\begin{equation} \label{eq:block_tridiagonal_matrix}
T = \left(
\begin{array}{c c c c}
    T_{11} & T_{12} & 0      & 0       \\
    T_{21} & T_{22} & T_{23} & 0       \\
    0      & T_{32} & T_{33} & T_{34}  \\
    0      & 0      & T_{43} & T_{44}
\end{array}
\right).
\end{equation}
Next, we split the physical system in two non-overlapping but interacting domains: $\mathcal{D}_{1} = \{1,2,3\}$ and $\mathcal{D}_{2} = \{4\}$. Then, we take the UDL (i.e., block upper-triangular, diagonal and lower-triangular) factorization based on this domain decomposition:
\begin{equation*}
T_{S}^{(4)} := T = 
\left(
\begin{array}{c c c c}
    I      & 0      & 0      & 0              \\
    0      & I      & 0      & 0              \\
    0      & 0      & I      & T_{34}t_{44}  \\
    0      & 0      & 0      & I
\end{array}
\right)
\left(
\begin{array}{c c c c}
    T_{11} & T_{12} & 0              & 0       \\
    T_{21} & T_{22} & T_{23}         & 0       \\
    0      & T_{32} & t_{S}^{(3)}    & 0       \\
    0      & 0      & 0              & t_{S}^{(4)}
\end{array}
\right)
\left(
\begin{array}{c c c c}
    I      & 0      & 0                  & 0       \\
    0      & I      & 0                  & 0       \\
    0      & 0      & I                  & 0       \\
    0      & 0      & t_{44}T_{43}      & I
\end{array}
\right),
\end{equation*}
with $t_{S}^{(4)} = T_{44}$, $t_{44} = \left( t_{S}^{(4)} \right)^{-1}$ and $t_{S}^{(3)} = T_{33} - T_{34} t_{44} T_{43}$. Note that, in constructing $T_{S}^{(4)}$, we have simply applied the factorization in eq.\ \ref{eq:2x2_UDL_factorization}. With this factorization at hand we can take the inverse of $T$ just as in eq.\ \ref{eq:2x2_explicit_inverse}:
\begin{equation} \label{eq:one_step_LDU_for_T_inverse}
T^{-1} = \left(T_{S}^{(4)}\right)^{-1} = 
\left(
\begin{array}{c c c c}
    I      & 0      & 0                  & 0       \\
    0      & I      & 0                  & 0       \\
    0      & 0      & I                  & 0       \\
    0      & 0      & -t_{44}T_{43}      & I
\end{array}
\right)
\left(
\begin{array}{c c c c}
       &                                 &                & 0       \\
       & \left( T_{S}^{(3)} \right)^{-1} &                & 0       \\
       &                                 &                & 0       \\
    0  & 0                               & 0              & t_{44}
\end{array}
\right)
\left(
\begin{array}{c c c c}
    I      & 0      & 0      & 0              \\
    0      & I      & 0      & 0              \\
    0      & 0      & I      & -T_{34}t_{44}  \\
    0      & 0      & 0      & I
\end{array}
\right),
\end{equation}
where
\[
T_{S}^{(3)} = \left(
\begin{array}{c c c}
    T_{11} & T_{12} & 0           \\
    T_{21} & T_{22} & T_{23}      \\
    0      & T_{32} & t_{S}^{(3)}
\end{array}
\right).
\]
Had we used $\ell=5$ instead of $\ell=4$, the reduction from $T_{S}^{(5)}$
down to $T_{S}^{(4)}$ would have been equivalent, going in that case also from a block tridiagonal matrix for the original system down to a block tridiagonal Schur complement. This shows the generality of our approach which is completely independent from the number of principle layers and the specific $4{\times}4$ example used here. In other words, our conclusions from rederiving RGF in this section hold for any number of principal layers $\ell$. Let us make this more precise first for the block sparse structure of the Schur complement.
\begin{proposition}\label{propo:RGF_Schur_complement_pattern}
Let $T$, block tridiagonal, correspond to a physical system with $\ell$ principal layers, labeled $a=1,2,...,\ell$. Let us split the layers in two domains, $\mathcal{D}_{1}=\{1,2,...,\ell-1\}$ and $\mathcal{D}_{2} = \{\ell\}$. Then, if we take the factorizations in eq.s \ref{eq:2x2_UDL_factorization} and \ref{eq:2x2_explicit_inverse}, the corresponding Schur complement $T_{S}^{(\ell-1)}$ is also block tridiagonal.
\end{proposition}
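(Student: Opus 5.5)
The plan is to write the Schur complement explicitly with the block parametrization of eq.\ \ref{eq:parametrization_M_tridiagonal} and check which blocks change. With $\mathcal{D}_{1}=\{1,\dots,\ell-1\}$ and $\mathcal{D}_{2}=\{\ell\}$, the factorization in eq.\ \ref{eq:2x2_UDL_factorization} has $M_{11}$ equal to the leading $(\ell-1)\times(\ell-1)$ block submatrix of $T$, $M_{22}=T_{\ell\ell}$, $M_{12}$ the last block column restricted to $\mathcal{D}_{1}$, and $M_{21}$ the last block row restricted to $\mathcal{D}_{1}$. Then $T_{S}^{(\ell-1)} = M_{11} - M_{12}T_{\ell\ell}^{-1}M_{21}$, where $T_{\ell\ell}$ is invertible because $T$ admits the $LDU$ decomposition assumed at the start of sect.\ \ref{sec:LDU_and_RGF}.

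\textbf{Step 1: $M_{11}$ is block tridiagonal.} It is a principal submatrix of $T$, so its blocks are $T_{ab}$ for $a,b\le \ell-1$. These vanish whenever $|a-b|>1$.

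\textbf{Step 2: the correction is supported on a single block.} By the block tridiagonal structure, the only possibly nonzero block of $M_{12}$ is $T_{\ell-1,\ell}=T_{\ell-1}^{[+1]}$, located in block row $\ell-1$. Likewise, the only possibly nonzero block of $M_{21}$ is $T_{\ell,\ell-1}=T_{\ell}^{[-1]}$, in block column $\ell-1$. We can write $M_{12}=e_{\ell-1}\otimes T_{\ell-1,\ell}$ and $M_{21}=e_{\ell-1}^{T}\otimes T_{\ell,\ell-1}$, with $e_{\ell-1}$ the last block unit vector of $\mathcal{D}_{1}$. The product is then
\[
M_{12}T_{\ell\ell}^{-1}M_{21} = \left(e_{\ell-1}e_{\ell-1}^{T}\right)\otimes \left(T_{\ell-1,\ell}T_{\ell\ell}^{-1}T_{\ell,\ell-1}\right).
\]
Its only nonzero block is therefore the diagonal block $(\ell-1,\ell-1)$.

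\textbf{Step 3: conclude.} Subtracting a matrix supported on a single diagonal block from a block tridiagonal matrix keeps it block tridiagonal. Hence $T_{S}^{(\ell-1)}$ equals $M_{11}$ except that its $(\ell-1,\ell-1)$ block becomes $t_{S}^{(\ell-1)} = T_{\ell-1,\ell-1} - T_{\ell-1,\ell}\,t_{\ell\ell}\,T_{\ell,\ell-1}$, which matches the $4\times4$ example. I would also state explicitly that the off-diagonal blocks $T_{\ell-1}^{[-1]}$ and $T_{\ell-2}^{[+1]}$ are unchanged, because the recursion relies on this.

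None of these steps is a real obstacle. The only point that needs care is the bookkeeping in Step 2, namely that the sparsity of the coupling blocks confines the correction to one block. That follows directly from $T_{a\ell}=0$ for $a<\ell-1$.
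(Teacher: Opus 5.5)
Your argument is the same as the paper's: both write $(T_{S}^{(\ell-1)})_{ab} = T_{ab} - T_{a\ell}t_{\ell\ell}T_{\ell b}$ and observe that, because $T_{a\ell}=0$ for $a<\ell-1$, the correction is nonzero only in the $(\ell-1,\ell-1)$ block; your Kronecker-product notation is just a cleaner version of the paper's delta bookkeeping. One small correction: invertibility of $T_{\ell\ell}$ does not follow from the leading-principal-minor (LDU) condition stated at the start of sect.~\ref{sec:LDU_and_RGF} (e.g.\ $\begin{pmatrix}1&1\\1&0\end{pmatrix}$ has a nonzero leading minor but $T_{22}=0$); it is instead presupposed by taking the factorization in eq.~\ref{eq:2x2_UDL_factorization}, so the proof is unaffected.
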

\begin{proof}
Take the same parametrization as in eq.\ \ref{eq:parametrization_M_tridiagonal}:
$$ T_{ab} = T_{a}^{[-1]}\delta_{a-1,b} + T_{a}^{[0]}\delta_{a,b} + T_{a}^{[+1]}\delta_{a+1,b}, \ a,b = 1, 2, ..., \ell. $$
The Schur complement is then built as
$$ (T_{S}^{(\ell-1)})_{ab} = T_{ab} - T_{a\ell}t_{\ell\ell}T_{\ell b}, \ a,b = 1,2,...,\ell-1, $$
where $t_{\ell \ell} = T_{\ell \ell}^{-1}$. It is clear that the first term contributing to the Schur complement amounts to a block tridiagonal contribution. The second one takes the following parametrized form:
$$ T_{a\ell}t_{\ell\ell}T_{\ell b} = (T_{a}^{[-1]}\delta_{a-1,\ell} + T_{a}^{[0]}\delta_{a,\ell} + T_{a}^{[+1]}\delta_{a+1,\ell})t_{\ell \ell}(T_{\ell}^{[-1]}\delta_{\ell-1,b} + T_{\ell}^{[0]}\delta_{\ell,b} + T_{\ell}^{[+1]}\delta_{\ell+1,b}), \ a,b = 1,2,...,\ell-1 . $$
This term is nonzero only if $a = \ell-1$ and $b=\ell-1$, leading to a block tridiagonal Schur complement.
\end{proof}
\noindent Going back to the $4{\times}4$ example, what remains to be determined is which part of the dense matrix $(T_{S}^{(3)})^{-1}$ is actually needed to obtain the desired $\textrm{b3diag}(T^{-1})$. The following proposition gives us the answer to this.
\begin{proposition}\label{propo:RGF_which_part_of_Schur_inverse}
Take $T$, $\mathcal{D}_{1}$ and $\mathcal{D}_{2}$ to be as in prop.\ \ref{propo:RGF_Schur_complement_pattern}. 
Then, if we take the factorizations in eq.s \ref{eq:2x2_UDL_factorization} and \ref{eq:2x2_explicit_inverse}, only $\textrm{b3diag}\left((T_{S}^{(\ell-1)})^{-1}\right)$ is needed to compute $\textrm{b3diag}(T^{-1})$.
\end{proposition}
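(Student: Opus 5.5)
We will multiply out the three factors in eq.\ \ref{eq:2x2_explicit_inverse}, with $M_{11}=T_{\mathcal{D}_1\mathcal{D}_1}$, $M_{22}=T_{\ell\ell}$, $M_{12}=T_{\mathcal{D}_1,\ell}$ and $M_{21}=T_{\ell,\mathcal{D}_1}$, and write $S:=(T_{S}^{(\ell-1)})^{-1}$. The product gives the four blocks
\begin{align*}
(T^{-1})_{\mathcal{D}_1\mathcal{D}_1} &= S, \qquad (T^{-1})_{\mathcal{D}_1\ell} = -S\,T_{\mathcal{D}_1,\ell}\,t_{\ell\ell}, \qquad (T^{-1})_{\ell\mathcal{D}_1} = -t_{\ell\ell}\,T_{\ell,\mathcal{D}_1}\,S,\\
(T^{-1})_{\ell\ell} &= t_{\ell\ell} + t_{\ell\ell}\,T_{\ell,\mathcal{D}_1}\,S\,T_{\mathcal{D}_1,\ell}\,t_{\ell\ell}.
\end{align*}
We then determine which blocks $S_{ab}$ each entry of $\textrm{b3diag}(T^{-1})$ requires. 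The sparsity is the same as in the proof of prop.\ \ref{propo:RGF_Schur_complement_pattern}: in the parametrization of eq.\ \ref{eq:parametrization_M_tridiagonal}, the only nonzero block of the column $T_{\mathcal{D}_1,\ell}$ is $T_{\ell-1,\ell}=T_{\ell-1}^{[+1]}$. Likewise, the only nonzero block of the row $T_{\ell,\mathcal{D}_1}$ is $T_{\ell,\ell-1}=T_{\ell}^{[-1]}$.

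We will go through the required entries in three cases.

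First, for $a,b\le \ell-1$ with $|a-b|\le 1$, we have $(T^{-1})_{ab}=S_{ab}$. This is exactly $\textrm{b3diag}(S)$.

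Second, for the off-diagonal coupling entries, sparsity reduces the expressions to $(T^{-1})_{\ell-1,\ell}=-S_{\ell-1,\ell-1}T_{\ell-1,\ell}t_{\ell\ell}$ and $(T^{-1})_{\ell,\ell-1}=-t_{\ell\ell}T_{\ell,\ell-1}S_{\ell-1,\ell-1}$. Both involve only the diagonal block $S_{\ell-1,\ell-1}$.

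Third, for the last diagonal block, $(T^{-1})_{\ell\ell}=t_{\ell\ell}+t_{\ell\ell}T_{\ell,\ell-1}S_{\ell-1,\ell-1}T_{\ell-1,\ell}t_{\ell\ell}$. This again uses only $S_{\ell-1,\ell-1}$.

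Every block used in these cases lies in $\textrm{b3diag}(S)$, and $t_{\ell\ell}$ and the blocks of $T$ are available without further work. This proves the claim. As a by-product, the argument shows that the coupling to the removed layer needs only the single corner block of $S$, which is the fact that drives the recursion.

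The main obstacle is the careful index bookkeeping. One must confirm that contracting a block column supported only on $\ell-1$ against $S$ never reaches an $S_{ab}$ with $|a-b|\ge2$. This is immediate here because the two sparse factors multiply $S$ on opposite sides in the $(\ell,\ell)$ block, and each of the two coupling blocks has a sparse factor on one side only. It will still be stated explicitly, since the analogous step in the block $n$-diagonal extension is where the bandwidth of the required part of $S$ is determined.
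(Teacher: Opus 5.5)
Your proposal is correct and follows essentially the same route as the paper, which also expands the factored inverse of eq.~\ref{eq:one_step_LDU_for_T_inverse} entrywise (written there as $LKU$ with Kronecker deltas) and uses that $L$ and $U$ differ from the identity only in the single block coupling layers $\ell-1$ and $\ell$. That is exactly your observation that $T_{\mathcal{D}_1,\ell}$ and $T_{\ell,\mathcal{D}_1}$ are supported on layer $\ell-1$ only; your case-by-case bookkeeping is a bit more explicit, showing that the coupling entries and the $(\ell,\ell)$ entry need only the corner block $S_{\ell-1,\ell-1}$, and it keeps the non-commuting factors in the correct order $t_{\ell\ell}T_{\ell,\ell-1}S_{\ell-1,\ell-1}T_{\ell-1,\ell}t_{\ell\ell}$.
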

\begin{proof}
First, reinterpret the factorization in eq.\ \ref{eq:one_step_LDU_for_T_inverse} as an LKU decomposition, with $K$ the central factor there. The $L$ and $U$ factors are the identity except for one entry, therefore:
\begin{align*}
    (T^{-1})_{ab} &= \sum_{c,d=1}^{\ell} L_{ac} K_{cd} U_{db} = \sum_{c,d=1}^{\ell} ( L_{a}^{[-1]}\delta_{a\ell}\delta_{\ell-1,c} + I\delta_{ac} ) K_{cd} ( I\delta_{db} + U_{a}^{[+1]}\delta_{d,\ell-1}\delta_{\ell,b} ) \\
    &= \sum_{c,d=1}^{\ell} ( L_{a}^{[-1]}\delta_{a,\ell}\delta_{\ell-1,c} ) K_{cd} ( I\delta_{db} ) + 
    \sum_{c,d=1}^{\ell} ( L_{a}^{[-1]}\delta_{a,\ell}\delta_{\ell-1,c} ) K_{cd} ( U_{a}^{[+1]}\delta_{d,\ell-1}\delta_{\ell,b} ) + 
    \sum_{c,d=1}^{\ell} ( I\delta_{ac} ) K_{cd} ( I\delta_{db} ) + 
    \sum_{c,d=1}^{\ell} ( I\delta_{ac} ) K_{cd} ( U_{a}^{[+1]}\delta_{d,\ell-1}\delta_{\ell,b} ) \\
    &= L_{a}^{[-1]}K_{\ell-1,b}\delta_{a,\ell} + L_{a}^{[-1]}U_{a}^{[+1]}K_{\ell-1,\ell-1}\delta_{a\ell}\delta_{\ell b} + K_{ab} + U_{a}^{[+1]}K_{a,\ell-1}\delta_{\ell b}.
\end{align*}
Due to needing only $\textrm{b3diag}(T^{-1})$, it is clear that the block tridiagonal part of $K$ is sufficient for this.
\end{proof}
\noindent Therefore, at this point the procedure is clear: apply the same factorizations on $T_{S}^{(3)}$ to compute the block tridiagonal of its inverse. This is the \textit{upward} pass of RGF. Continuing with this pass we can write the factorization in eq.\ \ref{eq:one_step_LDU_for_T_inverse} as $$ T^{-1} = L_{3} K_{3} U_{3} , $$
noting that we have used $K_{3}$ and not $D_{3}$ to clearly indicate that this is not a block diagonal matrix yet.
If we continue moving up recursively, the factorization of $T^{-1}$ will eventually become
\begin{equation} \label{eq:full_LDU_for_T_inverse}
    T^{-1} = L_{3} L_{2} L_{1} D_{1} U_{1} U_{2} U_{3} ,
\end{equation}
where $D_{1}$ is block diagonal. Note that $D_{1}$ and the $U's$ and $L's$ are taken here to be of the same size as the initial $T$. Now we have reached the end of the upward pass of RGF.\\
On the other hand, the \textit{downward} pass consists of recursively absorbing the $L$'s and $U$'s into $D_{1}$ in eq.\ \ref{eq:full_LDU_for_T_inverse}. In doing this, we can choose how many offdiagonals we want to retain in the output; as mentioned before, we want $\mathrm{b3diag}(T^{-1})$. We moreover make use of the flow illustrated in fig.\ \ref{fig:RGF_downward_pass} to get a more comprehensive understanding of this downward pass\footnote{Besides giving us a better visual understanding of the downward pass of RGF, fig.\ \ref{fig:RGF_downward_pass} will be crucial in future work when exploring recursive solutions for the Keldysh problem.}. The dotted blocks and arrows in fig.\ \ref{fig:RGF_downward_pass} indicate that we are not storing the blocks nor computing anything corresponding to those dotted arrows to get $\mathrm{b}3\mathrm{diag}(T^{-1})$.
\begin{figure}[h]
\includegraphics[width=7cm]{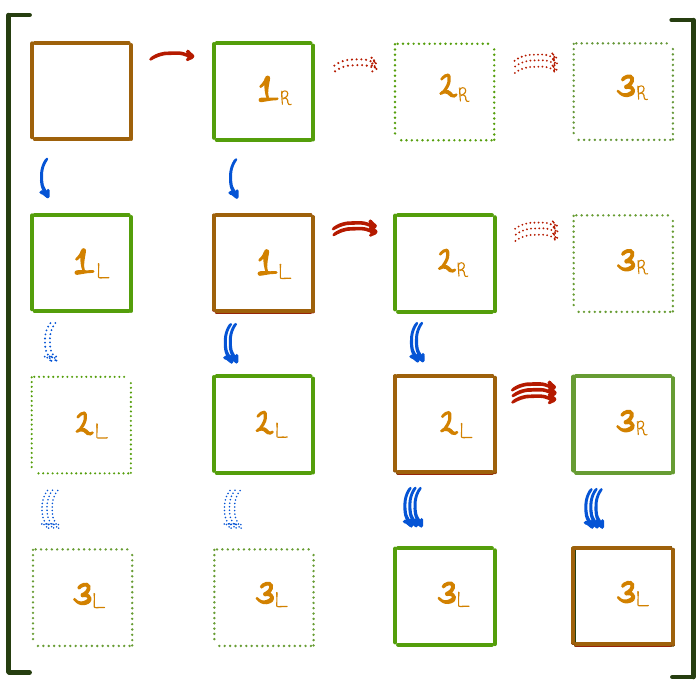}
\centering
\caption{Downward pass of RGF.} \label{fig:RGF_downward_pass}
\end{figure}

\noindent More explicitly, for this downward pass, we start with the $2{\times}2$ system:
\[
T_{S}^{(2)} = 
\left(
\begin{array}{c c}
    T_{11} & T_{12}   \\
    T_{21} & t_{S}^{(2)}
\end{array}
\right),
\]
and from this take the inverse:
\begin{align} \label{eq:2x2_exact_RGF}
\left( T_{S}^{(2)} \right)^{-1} 
&= \begin{pmatrix}
I             & 0                   \\
-t_{22}T_{21} & I
\end{pmatrix}
\begin{pmatrix}
t_{11}        & 0                   \\
0             & t_{22}
\end{pmatrix}
\begin{pmatrix}
I             & -T_{12}t_{22}  \\
0             & I
\end{pmatrix}
= \begin{pmatrix}
t_{11}               & -t_{11}T_{12}t_{22}   \\
-t_{22}T_{21}t_{11}  &  t_{22} + t_{22}T_{21}t_{11}T_{12}t_{22}
\end{pmatrix}.
\end{align}
The emergence of the elements in the rightmost matrix in eq.\ \ref{eq:2x2_exact_RGF} can be understood from the top-left $2{\times}2$ sub-matrix in fig.\ \ref{fig:RGF_downward_pass}. In that figure, we assume that we first absorb $U_{1}$ into $D_{1}$ (see eq.\ \ref{eq:full_LDU_for_T_inverse} and the only single red rightward arrow), followed by the absorption of $L_{1}$ (corresponding to the two blue single downward arrows). These right and left absorptions of $U_{1}$ and $L_{1}$ are further indicated in fig.\ \ref{fig:RGF_downward_pass} via the labels $1_{R}$ and $1_{L}$, respectively. To simplify the notation, we define:
$$ \widetilde{t}_{11} := t_{11}, \ \widetilde{t}_{22} := t_{22} + t_{22}T_{21}\widetilde{t}_{11}T_{12}t_{22}. $$
We can continue this recursion further, and after absorbing the other four ($L$ and $U$) factors in eq.\ \ref{eq:full_LDU_for_T_inverse}:
\begin{equation} \label{eq:4x4_exact_RGF}
   T^{-1} = \left(
\begin{array}{c c c c}
    \widetilde{t}_{11}              & -t_{11}T_{12}t_{22}  & t_{11}T_{12}t_{22}T_{23}t_{33}      & -t_{11}T_{12}t_{22}T_{23}t_{33}T_{34}t_{44}              \\
    -t_{22}T_{21}t_{11}             & \widetilde{t}_{22}   & -\widetilde{t}_{22}T_{23}t_{33}      & \widetilde{t}_{22}T_{23}t_{33}T_{34}t_{44}              \\
    t_{33}T_{32}t_{22}T_{21}t_{11}  & -t_{33}T_{32}\widetilde{t}_{22} & \widetilde{t}_{33}      & -\widetilde{t}_{33}T_{34}t_{44}  \\
    -t_{44}T_{43}t_{33}T_{32}t_{22}T_{21}t_{11}  & t_{44}T_{43}t_{33}T_{32}\widetilde{t}_{22} & -t_{44}T_{43}\widetilde{t}_{33}                  & \widetilde{t}_{44}
\end{array}
\right). 
\end{equation}
Again, we can connect the new terms that appear at each step of this recursion to the blocks and arrows in fig.\ \ref{fig:RGF_downward_pass}. For example, the element $\left( T^{-1}\right)_{23}$ corresponds to the only red double rightward arrow in fig.\ \ref{fig:RGF_downward_pass}, and the subsequent updates of $\left( T^{-1}\right)_{32}$ and $\left( T^{-1}\right)_{33}$ correspond to the two blue double downward arrows, and so on.\\
The concatenation of these two passes, upward and downward, gives us the RGF method. Alg.\ \ref{alg:rgf_block_tridiagonal} contains the pseudocode for RGF, where $B_{1}$ and $B_{2}$ are buffers; in practice, in the upward pass, we use the block diagonal of $M_{out}$ as $B_{2}$.
\begin{algorithm}
\caption{\enskip RGF for block tridiagonal systems.}\label{alg:rgf_block_tridiagonal}
\begin{algorithmic}
  \State \textbf{input}: $M_{inp}$, \textbf{output}: $M_{out}$
  \State // upward pass
  \State $B_{1}[\ell,\ell] = \mathrm{LU}(M_{inp}[\ell,\ell])$
  \For{$i=\ell-1:-1:1$}
    \State $B_{1}[i,i+1] = M_{inp}[i,i+1] \left( B_{1}[i+1,i+1] \right)^{-1}$
    \State $B_{1}[i+1,i] = \left( B_{1}[i+1,i+1] \right)^{-1} M_{inp}[i+1,i]$
    \State $B_{2}[i,i] = M_{inp}[i,i] - M_{inp}[i,i+1]B_{1}[i+1,i]$
    \State $B_{1}[i,i] = \mathrm{LU}(B_{2}[i,i])$
  \EndFor
  \State // downward pass
  \State $M_{out}[1,1] = \left( B_{1}[1,1] \right)^{-1}$
  \For{$i=2:\ell$}
    \State $M_{out}[i-1,i] = -M_{out}[i-1,i-1]B_{1}[i-1,i]$
    \State $M_{out}[i,i-1] = - B_{1}[i,i-1]M_{out}[i-1,i-1]$
    \State $M_{out}[i,i] = \left( B_{1}[i,i] \right)^{-1} - B_{1}[i,i-1]M_{out}[i-1,i]$
  \EndFor
\end{algorithmic}
\end{algorithm}
\vskip 0.08in
\noindent \textbf{Cost model:} to systematically understand how the execution time of our recursive methods scales with respect to the number of principal layers $\ell$ and the block size $b_{s}$, we introduce detailed analytical cost models for RGF and its variations in later sections. Beyond providing a theoretical baseline, this formalization serves a direct practical purpose: it drives an automated tuning mechanism within our \texttt{LibNEGF.jl} implementation. By evaluating these cost models using pre-measured average execution times of fundamental linear algebra kernels, the auto-tuner dynamically dictates the optimal concurrency parameters for the parallel version of RGF (see sect.\ \ref{sec:DDRGF}) prior to execution.\\
As our formulation of RGF relies on three fundamental BLAS operations: GEMM (multiplication of two dense matrices $AB$), LU (factorization of a dense matrix as $A = LU$) and GETRS (division from the left in the form $(LU)^{-1}B$). The modeling of LU and GETRS by simple FLOPs counting can be less accurate than in the GEMM case due to caching effects, which can significantly vary depending on the value of the block size $b_{s}$. To account for this, we make our cost model uniform by counting GEMMs and introducing the ratios
$$ r_{LU}(b_{s}) = \frac{t_{LU}(b_{s})}{t_{GEMM}(b_{s})}, \ r_{GETRS}(b_{s}) = \frac{t_{GETRS}(b_{s})}{t_{GEMM}(b_{s})} . $$
Here, the value of $b_{s}$ is taken as the average of the dimension of the diagonal blocks $T_{aa}$ in $T$; we assume $b_{s}$ to have relatively small variance. These ratios, for LU and GETRS, are determined on-the-fly in our implementation, i.e., whenever we need to evaluate this cost model, we use pre-computed ratios estimated from running GEMM, LU and GETRS several times and from averages of their execution times. This gives us a cost model that is easily adjustable across different architectures.
From simple counting, the reformulation of RGF presented in this section executes $\ell$ LU factorizations, $3\ell-2$ GETRSs and $4(\ell-1)$ GEMMs, totalling:
\begin{equation}\label{eq:cost_model_RGF}
    r_{RGF}(\ell,b_{s}) := \frac{t_{RGF}(\ell,b_{s})}{t_{GEMM}(b_{s})} = 4(\ell-1) + \ell r_{LU}(b_{s}) + (3\ell-2)r_{GETRS}(b_{s}) = \ell(4+r_{LU}(b_{s})+3r_{GETRS}(b_{s})) - (4+2r_{GETRS}(b_{s})) .
\end{equation}
This cost model clearly exposes the linear scaling with respect to the number of principal layers $\ell$. However, to accurately capture the dependence on the block size $b_s$, we must strictly distinguish between theoretical algorithmic complexity and empirical execution time. While the fundamental arithmetic complexity of the underlying GEMM, LU, and GETRS operations is strictly cubic ($O(b_s^3)$), the measured execution time $t_{\text{GEMM}}(b_s)$ does not follow a simple theoretical scaling law. Instead, the execution time is heavily influenced by hardware realities, such as cache hierarchy transitions, memory bandwidth saturation, and vectorization efficiency. Because these hardware constraints also cause the relative performance ratios $r_{\text{LU}}(b_s)$ and $r_{\text{GETRS}}(b_s)$ to fluctuate intricately as $b_s$ varies, the total runtime $t_{\text{RGF}}$ becomes a highly non-trivial function of the block size in practice. To rigorously characterize this practical behavior, we have included extensive microbenchmarks and a roofline model assessment in app.\ \ref{app:microbenchmarking}, providing a clear picture of the true functional dependency of $t_{\text{GEMM}}$ and these operational ratios on $b_s$.

\subsection{Recursive Green's function method for block $n$-diagonal systems}\label{sec:RGF_block_ndiagonal}
When inverting a block $n$-diagonal matrix with $n>3$, e.g.\ when computing $\mathrm{b5diag}(P^{-1})$ with $P$ a ${5\times}5$ block pentadiagonal matrix:
\[
P = 
\left(
\begin{array}{c c c c c}
    P_{11} & P_{12} & P_{13} & 0      & 0      \\
    P_{21} & P_{22} & P_{23} & P_{24} & 0      \\
    P_{31} & P_{32} & P_{33} & P_{34} & P_{35} \\
    0      & P_{42} & P_{43} & P_{44} & P_{45} \\
    0      & 0      & P_{53} & P_{54} & P_{55}
\end{array}
\right),
\]
we can use the same strategy employed in sect.\ \ref{sec:RGF_block_tridiagonal}. To this end, we can split the physical system in two domains, $\mathcal{D}_{1} = \{1,2,3,4\}$ and $\mathcal{D}_{2} = \{5\}$. Then, we can do an UDL factorization of $P$:
\begin{equation*}
P_{S}^{(5)} := P = 
\left(
\begin{array}{c c c c c}
    I      & 0      & 0      & 0   & 0                         \\
    0      & I      & 0      & 0   & 0                         \\
    0      & 0      & I      & 0   & P_{35}p_{55}       \\
    0      & 0      & 0      & I   & P_{45}p_{55}      \\
    0      & 0      & 0      & 0   & I
\end{array}
\right)
\left(
\begin{array}{c c c c c}
    P_{11} & P_{12} & P_{13}             & 0                  & 0           \\
    P_{21} & P_{22} & P_{23}             & P_{24}             & 0           \\
    P_{31} & P_{32} & (P_{S}^{(4)})_{33} & (P_{S}^{(4)})_{34} & 0           \\
    0      & P_{42} & (P_{S}^{(4)})_{43} & (P_{S}^{(4)})_{44} & 0           \\
    0      & 0      & 0                  & 0                  & p_{S}^{(5)}
\end{array}
\right)
\left(
\begin{array}{c c c c c}
    I      & 0      & 0                   & 0                          & 0       \\
    0      & I      & 0                   & 0                          & 0       \\
    0      & 0      & I                   & 0                          & 0       \\
    0      & 0      & 0                   & I                          & 0       \\
    0      & 0      & p_{55}P_{53}        & p_{55}P_{54}               & I
\end{array}
\right),
\end{equation*}
where:
$$ p_{S}^{(5)} = P_{55} , \ p_{55} = (p_{S}^{(5)})^{-1}, \ (P_{S}^{(4)})_{ab} = P_{ab} - P_{a5}p_{55}P_{5b}, \ a,b = 3,4 . $$
\noindent With the UDL factorization at hand, we then take the inverse of $P$:
\begin{equation*}
P^{-1} = \left( P_{S}^{(5)} \right)^{-1} = 
\left(
\begin{array}{c c c c c}
    I      & 0      & 0                   & 0                          & 0       \\
    0      & I      & 0                   & 0                          & 0       \\
    0      & 0      & I                   & 0                          & 0       \\
    0      & 0      & 0                   & I                          & 0       \\
    0      & 0      & -p_{55}P_{53}       & -p_{55}P_{54}              & I
\end{array}
\right)
\left(
\begin{array}{c c c c c}
           &        &                 &                & 0           \\
           & \left(P_{S}^{(4)}\right)^{-1} &                 &            & 0           \\
      &   &   &      & 0           \\
           &   &      &  & 0           \\
    0      & 0      & 0               & 0               & p_{55}
\end{array}
\right)
\left(
\begin{array}{c c c c c}
    I      & 0      & 0      & 0   & 0                         \\
    0      & I      & 0      & 0   & 0                         \\
    0      & 0      & I      & 0   & -P_{35}p_{55}       \\
    0      & 0      & 0      & I   & -P_{45}p_{55}      \\
    0      & 0      & 0      & 0   & I
\end{array}
\right),
\end{equation*}
with:
\begin{equation} \label{eq:block_pentadiagonal_4x4_system}
P_{S}^{(4)} = \left(
\begin{array}{c c c c}
    P_{11} & P_{12} & P_{13}             & 0               \\
    P_{21} & P_{22} & P_{23}             & P_{24}          \\
    P_{31} & P_{32} & (P_{S}^{(4)})_{33} & (P_{S}^{(4)})_{34}   \\
    0      & P_{42} & (P_{S}^{(4)})_{43} & (P_{S}^{(4)})_{44}
\end{array}
\right).
\end{equation}
\noindent This form for $P_{S}^{(4)}$ motivates a general statement on the structure of the Schur complement when dealing with block $n$-diagonal problems.
\begin{proposition}\label{propo:ndiag_RGF_Schur_complement_pattern}
Let $M$, block $n$-diagonal with $n \in \{3,5,7,...\}$, correspond to a physical system with $\ell$ principal layers, labeled $a=1,2,...,\ell$. Let us split the layers in two domains, $\mathcal{D}_{1}=\{1,2,...,\ell-1\}$ and $\mathcal{D}_{2} = \{\ell\}$. Then, if we take the factorizations in eq.s \ref{eq:2x2_UDL_factorization} and \ref{eq:2x2_explicit_inverse}, the corresponding Schur complement $M_{S}^{(\ell-1)}$ is also block $n$-diagonal.
\end{proposition}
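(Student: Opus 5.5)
The plan is to mimic the proof of prop.\ \ref{propo:RGF_Schur_complement_pattern}, replacing the three-term parametrization by a bandwidth one. Write $n = 2k+1$ with $k \geq 1$. Parametrize $M$ as
$$ M_{ab} = \sum_{j=-k}^{k} M_{a}^{[j]}\,\delta_{a+j,b}, \ a,b = 1,2,...,\ell, $$
or equivalently $M_{ab}=0$ whenever $|a-b|>k$. With the split $\mathcal{D}_{1}=\{1,...,\ell-1\}$, $\mathcal{D}_{2}=\{\ell\}$, eq.\ \ref{eq:2x2_UDL_factorization} gives
$$ (M_{S}^{(\ell-1)})_{ab} = M_{ab} - M_{a\ell}\,m_{\ell\ell}\,M_{\ell b}, \ a,b=1,...,\ell-1, \quad m_{\ell\ell}=M_{\ell\ell}^{-1}. $$
I will show separately that each of the two terms has block bandwidth at most $k$.

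The first term is the leading $(\ell-1)\times(\ell-1)$ block principal submatrix of $M$. It therefore inherits the block $n$-diagonal pattern directly.

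For the correction term $M_{a\ell}m_{\ell\ell}M_{\ell b}$, observe the following:
\begin{itemize}
\item $M_{a\ell}\neq 0$ only if $\ell-a\leq k$, i.e.\ $a\in\{\ell-k,...,\ell-1\}$.
\item $M_{\ell b}\neq 0$ only if $b\in\{\ell-k,...,\ell-1\}$.
\end{itemize}
The update is thus supported on the trailing $k\times k$ block square with indices in $\{\ell-k,...,\ell-1\}$ (intersected with $\{1,...,\ell-1\}$ if $\ell\le k$). In general this square is dense: for the pentadiagonal example it is exactly the $2\times 2$ square $(P_S^{(4)})_{ab}$, $a,b=3,4$. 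Any two indices $a,b$ in this square satisfy $|a-b|\leq k-1 < k$. Hence the fill-in stays strictly inside the band, and the sum of the two terms has $(M_{S}^{(\ell-1)})_{ab}=0$ whenever $|a-b|>k$, which is the claim.

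The only step needing care is this fill-in bound, i.e.\ checking that the dense trailing square created by the rank-one-in-blocks update fits within the band. That follows from the index-range argument above. The degenerate case $\ell-1<k$ is trivial, since every matrix of that size is block $n$-diagonal. The case $n=3$ recovers prop.\ \ref{propo:RGF_Schur_complement_pattern}.
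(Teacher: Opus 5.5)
Your proposal is correct and follows essentially the same route as the paper. Both parametrize the band, observe that the leading principal submatrix inherits the pattern, and note that the correction $M_{a\ell}m_{\ell\ell}M_{\ell b}$ is supported only on $a,b\in\{\ell-k,\dots,\ell-1\}$; you also state explicitly that $|a-b|\le k-1$ on this trailing square, a step the paper leaves implicit.
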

\begin{proof}
First, we write the parametrized form of $M$:
$$ M_{ab} =  \sum_{i=-(n-1)/2}^{(n-1)/2} M_{a}^{[i]}\delta_{a+i,b}. $$
The Schur complement is built as
$$ (M_{S}^{(\ell-1)})_{ab} = M_{ab} - M_{a\ell}m_{\ell\ell}M_{\ell b}, \ a,b = 1,2,...,\ell-1, $$
where $m_{\ell \ell} = M_{\ell \ell}^{-1}$. It is clear that the first term contributing to the Schur complement amounts to a block $n$-diagonal contribution. The second one takes the following parametrized form:
$$ M_{a\ell}m_{\ell\ell}M_{\ell b} = 
\left(\sum_{i=-(n-1)/2}^{(n-1)/2} M_{a}^{[i]}\delta_{a+i,\ell}\right)m_{\ell \ell}\left(\sum_{j=-(n-1)/2}^{(n-1)/2} M_{\ell}^{[j]}\delta_{\ell+j,b}\right) = \sum_{i=-(n-1)/2}^{(n-1)/2}\sum_{j=-(n-1)/2}^{(n-1)/2} M_{a}^{[i]} m_{\ell \ell} M_{\ell}^{[j]} \delta_{a+i,\ell} \delta_{\ell+j,b} , \ a,b = 1,2,...,\ell-1 . $$
This term is nonzero only if
$$ a, b = \ell-1-\left(\frac{n-1}{2}-1\right), \ell-1-\left(\frac{n-1}{2}-2\right),...,\ell-1, $$
leading to a block $n$-diagonal Schur complement.\\
\end{proof}
\noindent In a similar way to the block tridiagonal case, we can also show that we need only $\textrm{b}n\textrm{diag}((M_{S}^{(\ell-1)})^{-1})$ to compute $\mathrm{b}n\textrm{diag}(M^{-1})$; we skip this proof here. The recursion at this point becomes general and we can take the inverse of $P_{S}^{(4)}$ by recursively employing the same corresponding steps used for $P_{S}^{(5)}$.\\
To explicitly detail the downward pass for $n>3$, we must move beyond the simple nearest-neighbor absorption depicted for the block tridiagonal case. When recursively absorbing the L and U factors into the central block-diagonal to extract $\textrm{bndiag}(P^{-1})$, the algorithmic dependencies expand according to the off-diagonal bandwidth $w = (n-1)/2$. Starting from the $w{\times}w$ top-left system, the downward recursion step for a target principal layer $i$ (where $i$ goes from 2 to $\ell$) requires updating $w$ upper off-diagonal blocks, $w$ lower off-diagonal blocks, and the central diagonal block. Specifically, the computation of $(P^{-1})_{i-k,i}$ and $(P^{-1})_{i,i-k}$ (for $k = 1,...,\textrm{min}(w,i-1)$) requires multiplying the previously resolved $(P^{-1})_{i-k,i-k}$ with the generalized L and U multipliers. Consequently, the update for the diagonal element $(P^{-1})_{i,i}$ is no longer a single matrix addition as in eq.\ \ref{eq:2x2_exact_RGF}, but an accumulation of w distinct matrix products involving the newly computed off-diagonal blocks. This expanded dependency web guarantees that exactly - and only - the required block $n$-diagonal elements are computed.
\vskip 0.08in
\noindent \textbf{Cost model:} to explicitly quantify the efficiency of our generalized block $n$-diagonal formulation, we derive an exact cost model that generalizes eq.\ \ref{eq:cost_model_RGF}. Let $w = (n-1)/2$ represent the off-diagonal bandwidth. In the upward pass, at step $i$ (going from $\ell-1$ down to 1), the boundary constraints dictate that the algorithm calculates left and right multipliers requiring $2 \min(w, i)$ GETRS operations, and applies the Schur complement update requiring $\min(w, i)^2$ GEMMs. In the downward pass, for layer $i$ (going from 2 to $\ell$), extracting the block $n$-diagonal elements requires $2 \min(w, i-1)^2$ GEMMs for the upper and lower off-diagonals, and $\min(w, i-1)$ GEMMs plus 1 GETRS for the central diagonal update. Both passes collectively execute exactly $\ell$ LU factorizations.\\
By precisely evaluating the summations over these loop bounds (assuming $\ell > w$), the exact number of fundamental operations for the $n$-diagonal RGF is:
\begin{itemize}
    \item \textbf{LU:} $\ell$
    \item \textbf{GETRS:} $\ell(2w + 1) - w^2 - w$
    \item \textbf{GEMMs:} $(3w^2 + w)\ell - 2w^3 - 2w^2$
\end{itemize}
\noindent The total execution cost, normalized to $t_{\text{GEMM}}(b_s)$, is therefore:
\begin{equation} \label{eq:cost_model_general_RGF}
    r_{\text{nRGF}}(\ell, b_s, w) = \left[ (3w^2 + w)\ell - 2w^3 - 2w^2 \right] + \ell \cdot r_{\text{LU}}(b_s) + \left[ \ell(2w + 1) - w^2 - w \right] r_{\text{GETRS}}(b_s).
\end{equation}
Note that for $w=1$, this perfectly reduces to $4(l-1)$ GEMMs and $3\ell-2$ GETRS, recovering eq.\ \ref{eq:cost_model_RGF}. An alternative and commonly used approach to handle $n>3$ systems is ``blocks fusing". In this strategy, $w$ adjacent principal layers of size $b_s$ are artificially grouped into a single ``super-block'' of size $W_s = w \cdot b_s$. This physically reduces the block $n$-diagonal system back to a standard block tridiagonal ($n=3$) system, allowing the direct use of the traditional RGF algorithm on a matrix with $\ell' = \lceil \ell/w \rceil$ principal layers. Using eq.\ \ref{eq:cost_model_RGF} directly on the fused system, the exact cost is:
\begin{equation} \label{eq:cost_model_general_RGF_fused}
    r_{\text{fused}}(\ell', W_s) = 4(\ell'-1) + \ell' \cdot r_{\text{LU}}(W_s) + (3\ell'-2) r_{\text{GETRS}}(W_s).
\end{equation}
While mathematically equivalent, comparing these exact models reveals a stark computational trade-off. Because the arithmetic complexity of GEMM, LU, and GETRS scales approximately cubically with the block size (e.g., $t_{\text{GEMM}}(W_s) \approx w^3 \cdot t_{\text{GEMM}}(b_s)$), the fusing strategy severely inflates the base computational cost. The fused approach effectively requires a runtime proportional to $4(\ell/w) \cdot w^3 = 4w^2 \ell$ base GEMMs, which is asymptotically heavier than the $(3w^2 + w)\ell$ GEMM scaling of the generalized $n$-diagonal RGF.\\
More critically, block fusing forces dense $O(W_s^3)$ GEMMs, LU factorizations and GETRS operations on the artificially bloated super-blocks. Conversely, our native $n$-diagonal formulation preserves the smaller $b_s \times b_s$ block granularity. This native approach strictly restricts computations to the non-zero sparsity pattern, completely avoiding the zero-padding overhead inherent to super-blocks and keeping the computationally expensive factorizations bound to approximately $O(b_s^3)$.

\section{Domain Decomposition Based Recursive Green's Function Method}\label{sec:DDRGF}
We now formulate a variation of RGF, exposing concurrency, for computing $\mathrm{b}3\mathrm{diag}(T^{-1})$. We call this method domain decomposition based RGF (DDRGF). To this end, the physical system is split into contiguous but disjoint sub-domains, in the way illustrated in fig.\ \ref{fig:parallel_RGF_domains}, where each rectangular colored block is a physical principal layer. In the representation of the original system matrix $T$, the principal layers in fig.\ \ref{fig:parallel_RGF_domains} are indexed as $1,2,3,...,\ell$. We turn now to a new indexing, induced by a permutation operator $R_{3}$ which indexes first the principal layers from domain $\mathcal{D}_{1}$ and then those of domain $\mathcal{D}_{2}$, leading to:
\begin{equation} \label{eq:T_hat_overall_structure}
    \widehat{T} = R_{3} T R_{3}^{T} = \left(
\begin{array}{c c}
    \widehat{T}_{11}  & \widehat{T}_{12}   \\
    \widehat{T}_{21}  & \widehat{T}_{22}
\end{array}
\right).
\end{equation}
\begin{figure}[h]
    \centering
        \begin{tikzpicture}

            \definecolor{dombluefill}{RGB}{164,194,230}
            \definecolor{domblueborder}{RGB}{46,116,181}
            \definecolor{domorangefill}{RGB}{244,177,131}
            \definecolor{domredborder}{RGB}{192,0,0}
            \definecolor{textorange}{RGB}{227,108,9}
            \definecolor{textred}{RGB}{152,38,73}
        
            \tikzset{
                bblock/.style={fill=dombluefill, draw=black, line width=0.5pt},
                oblock/.style={fill=domorangefill, draw=black, line width=0.5pt},
                bdomain/.style={draw=domblueborder, dashed, line width=1pt},
                odomain/.style={draw=domredborder, dashed, line width=1pt}
            }
        
            
            \newcommand{\drawblue}[2]{
                \begin{scope}[shift={(#1,0)}]
                    \draw[bdomain] (0, -1.6) rectangle (2.0, 1.6);
                    
                    \draw[bblock] (0, -0.7) rectangle (0.4, 0.7);
                    \draw[bblock] (0.4, -0.4) rectangle (0.8, 0.4);
                    \draw[bblock] (0.8, -1.2) rectangle (1.2, 1.2);
                    \draw[bblock] (1.2, -1.2) rectangle (1.6, 1.2);
                    \draw[bblock] (1.6, -0.7) rectangle (2.0, 0.7);
                    
                    \node[text=textorange] at (1.0, -2.5) {\LARGE $\mathcal{D}_2^{#2}$};
                \end{scope}
            }
        
            \newcommand{\draworange}[2]{
                \begin{scope}[shift={(#1,0)}]
                    \draw[odomain] (0, -1.6) rectangle (2.0, 1.6);
                    
                    \draw[oblock] (0, -0.9) rectangle (0.4, 0.9);
                    \draw[oblock] (0.4, -1.0) rectangle (0.8, 1.0);
                    \draw[oblock] (0.8, -0.5) rectangle (1.2, 0.5);
                    \draw[oblock] (1.2, -1.3) rectangle (1.6, 1.3);
                    \draw[oblock] (1.6, -0.8) rectangle (2.0, 0.8);
                    
                    \node[text=textred] at (1.0, -2.5) {\LARGE $\mathcal{D}_1^{#2}$};
                \end{scope}
            }
        
            
            \drawblue{0}{1}
            \draworange{2}{1}
            \drawblue{4}{2}
            \draworange{6}{2}
        
            \fill[domredborder] (8.5, 0) circle (3pt);
            \fill[domredborder] (9.1, 0) circle (3pt);
            \fill[domredborder] (9.7, 0) circle (3pt);
        
            \draworange{10.2}{\ell_1-1}
            \drawblue{12.2}{\ell_2}
            \draworange{14.2}{\ell_1}
        
    \end{tikzpicture}
    \caption{Domain decomposition of the original physical system in two domains, $\mathcal{D}_{1}$ and $\mathcal{D}_{2}$.}\label{fig:parallel_RGF_domains}
\end{figure}
\noindent We always enforce $\ell_{1}=\ell_{2}$ in this work; this is driven by the need to have good load balance in a parallel implementation of this algorithm. The sub-index in $R_{3}$ indicates the reordering of a block $n$-diagonal matrix with $n=3$; $R_{n}$ is the reordering of block $n$-diagonal matrices. Note that $R_{3}R_{3}^{T} = I$ and therefore
\begin{equation} \label{eq:reordered_Tinverse_for_parallel_RGF}
    T^{-1} = R_{3}^{T} \widehat{T}^{-1} R_{3}.
\end{equation}
The task at hand is to compute
\begin{equation}\label{eq:mapping_b3diag_T_to_That}
    \left( \widehat{T}^{-1} \right)_{\mathrm{b3diag}} := R_{3} \left(\mathrm{b3diag}(T^{-1})\right) R_{3}^{T},
\end{equation}
implying the extraction of blocks from within $\widehat{T}^{-1}$ that match the nonzero pattern in $\widehat{T}$. Hence, our focus is on $\widehat{T}^{-1}$ from hereon.
The matrices $\widehat{T}_{11}$ and $\widehat{T}_{22}$ are block-diagonal at the level of the sub-domains within $\mathcal{D}_{1}$ and $\mathcal{D}_{2}$, respectively, and each of their submatrices are block tridiagonal within each sub-domain. Furthermore, we can use eq.s \ref{eq:2x2_UDL_factorization} and \ref{eq:2x2_explicit_inverse}:
\begin{equation} \label{eq:2x2_explicit_inverse_parallel_DDRGF}
\widehat{T}^{-1} = 
\begin{pmatrix}
I                  & 0\\
-\widehat{T}_{22}^{-1}\widehat{T}_{21} & I
\end{pmatrix}
\begin{pmatrix}
\widehat{T}_{S}^{-1} & 0\\
0          & \widehat{T}_{22}^{-1}
\end{pmatrix}
\begin{pmatrix}
I & -\widehat{T}_{12}\widehat{T}_{22}^{-1}\\
0 & I
\end{pmatrix} = 
\begin{pmatrix}
\widehat{T}_{S}^{-1}               & -\widehat{T}_{S}^{-1}\widehat{T}_{12}\widehat{T}_{22}^{-1}                      \\
-\widehat{T}_{22}^{-1}\widehat{T}_{21}\widehat{T}_{S}^{-1}  & \widehat{T}_{22}^{-1} + \widehat{T}_{22}^{-1}\widehat{T}_{21}\widehat{T}_{S}^{-1}\widehat{T}_{12}\widehat{T}_{22}^{-1}
\end{pmatrix},
\end{equation}
\begin{equation} \label{eq:Schur_complement_in_parallel_DDRGF}
    \widehat{T}_{S} = \widehat{T}_{11} - \widehat{T}_{12}\widehat{T}_{22}^{-1}\widehat{T}_{21} .
\end{equation}
We show now that, just as in RGF, the Schur complement here is also always block tridiagonal.
\begin{proposition}\label{propo:DDRGF_Schur_complement_pattern}
Let $T$, block tridiagonal, correspond to a physical system with $\ell$ principal layers, labeled $a=1,2,...,\ell$. Let us split the layers in two domains, $\mathcal{D}_{1}$ and $\mathcal{D}_{2}$, in the same manner as in fig.\ \ref{fig:parallel_RGF_domains}. Then, if we take the reordering in eq.\ \ref{eq:T_hat_overall_structure} and the factorizations in eq.s \ref{eq:2x2_explicit_inverse_parallel_DDRGF} and \ref{eq:Schur_complement_in_parallel_DDRGF}, the Schur complement $\widehat{T}_{S}$ is also block tridiagonal.
\end{proposition}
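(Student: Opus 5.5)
The plan is to mirror the proof of prop.\ \ref{propo:RGF_Schur_complement_pattern}: compute the block entries of $\widehat{T}_{S}=\widehat{T}_{11}-\widehat{T}_{12}\widehat{T}_{22}^{-1}\widehat{T}_{21}$ with the parametrization of eq.\ \ref{eq:parametrization_M_tridiagonal}, and show that every nonzero entry falls inside the block tridiagonal pattern. Here the pattern is taken with respect to the induced ordering of the layers of $\mathcal{D}_{1}$. Under $R_{3}$, the layers of $\mathcal{D}_{1}^{1},\mathcal{D}_{1}^{2},\dots,\mathcal{D}_{1}^{\ell_{1}}$ are listed consecutively and keep their original relative order. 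Consequently, the last layer of $\mathcal{D}_{1}^{k-1}$ is immediately followed by the first layer of $\mathcal{D}_{1}^{k}$.

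\textbf{Step 1: the first term.} Write $\widehat{T}_{11}$ in this ordering. Since $T$ is block tridiagonal and distinct sub-domains of $\mathcal{D}_{1}$ are never adjacent (a sub-domain of $\mathcal{D}_{2}$ always sits between them), $\widehat{T}_{11}$ is block diagonal over the sub-domains. Each diagonal block is the block tridiagonal restriction of $T$ to that sub-domain. Hence $\widehat{T}_{11}$ is block tridiagonal.

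\textbf{Step 2: structure of $\widehat{T}_{22}^{-1}$.} By the same argument, $\widehat{T}_{22}$ is block diagonal over the sub-domains $\mathcal{D}_{2}^{k}$. So $\widehat{T}_{22}^{-1}=\mathrm{diag}_{k}\big((\widehat{T}_{22}^{(k)})^{-1}\big)$. Each factor is dense within its sub-domain but does not couple different sub-domains. Writing the correction entrywise gives
$$(\widehat{T}_{12}\widehat{T}_{22}^{-1}\widehat{T}_{21})_{ab}=\sum_{k}\sum_{c,d\in\mathcal{D}_{2}^{k}}T_{ac}\,\big((\widehat{T}_{22}^{(k)})^{-1}\big)_{cd}\,T_{db},\qquad a,b\in\mathcal{D}_{1}.$$
By eq.\ \ref{eq:parametrization_M_tridiagonal}, $T_{ac}\neq 0$ with $a\in\mathcal{D}_{1}$ and $c\in\mathcal{D}_{2}^{k}$ requires $|a-c|=1$. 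This means $a$ is a boundary layer of a $\mathcal{D}_{1}$ sub-domain adjacent to $\mathcal{D}_{2}^{k}$. Moreover $c$ must be the first or last layer of $\mathcal{D}_{2}^{k}$, and the same holds for $b$ and $d$.

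\textbf{Step 3: case split on $k$.} For an interior $\mathcal{D}_{2}^{k}$ ($k\geq 2$), the neighbours in $\mathcal{D}_{1}$ are exactly the last layer of $\mathcal{D}_{1}^{k-1}$ and the first layer of $\mathcal{D}_{1}^{k}$. The contribution is therefore confined to the $2\times 2$ block formed by these two layers, which are consecutive in the new ordering. It thus produces two diagonal corrections and the new couplings $(\widehat{T}_{S})_{a,a+1}$ and $(\widehat{T}_{S})_{a+1,a}$. For $\mathcal{D}_{2}^{1}$ at the left end, the only neighbour is the first layer of $\mathcal{D}_{1}^{1}$, so it yields only a diagonal correction. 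The rightmost piece $\mathcal{D}_{1}^{\ell_{1}}$ has no $\mathcal{D}_{2}$ neighbour on its right. Summing over $k$, every nonzero block of the correction lies on the diagonal or on the first off-diagonals, and combining with Step 1 gives the claim.

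\textbf{Main obstacle.} The delicate part is not algebraic but the index bookkeeping. One must make precise that, after applying $R_{3}$, the two $\mathcal{D}_{1}$ layers flanking any $\mathcal{D}_{2}^{k}$ become adjacent indices. One must also check that the fill-in they receive, a product through the dense $(\widehat{T}_{22}^{(k)})^{-1}$, is the only inter-sub-domain coupling. Finally, each $\mathcal{D}_{2}^{k}$ must touch at most two $\mathcal{D}_{1}$ layers, which holds because the sub-domains are contiguous and alternate. I would handle this by fixing explicit index ranges for $\mathcal{D}_{1}^{k}$ and $\mathcal{D}_{2}^{k}$ from fig.\ \ref{fig:parallel_RGF_domains} and checking the Kronecker deltas against them.
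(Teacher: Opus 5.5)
Your proof is correct and follows essentially the same route as the paper. Both arguments split $\widehat{T}_{S}$ into $\widehat{T}_{11}$, which is block diagonal over sub-domains and hence block tridiagonal, and the correction $\widehat{T}_{12}\widehat{T}_{22}^{-1}\widehat{T}_{21}$, then use the block-diagonality of $\widehat{T}_{22}^{-1}$ and the boundary-only sparsity of $\widehat{T}_{12},\widehat{T}_{21}$ to show that the correction couples only the $\mathcal{D}_{1}$ layers flanking each $\mathcal{D}_{2}^{k}$. The paper does this bookkeeping with sub-domain-level Kronecker deltas; you work with the original layer indices and state explicitly that the two flanking layers become consecutive under $R_{3}$, a point the paper leaves implicit when it concludes tridiagonality ``at the sub-domain level.''
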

\begin{proof}
We call a matrix \textit{almost} block tridiagonal when its non-zero blocks are within the matrix band defined by the tridiagonal blocks.
The addition of two almost block tridiagonal matrices is clearly almost block tridiagonal as well. We start by noting that $\widehat{T}_{11}$ is almost block tridiagonal, with the blocks connecting the different sub-domains in $\mathcal{D}_{1}$ being zero due to the disjointness of the constructed domain decomposition. Therefore, we need to show only that $\widehat{T}_{12}\widehat{T}_{22}^{-1}\widehat{T}_{21}$ is almost block tridiagonal to prove this proposition. For this, we need to introduce indices $i$ and $j$ labeling the sub-domains $\mathcal{D}_{1}^{i}$ and $\mathcal{D}_{2}^{j}$, respectively. Now, let us first parametrize the $\widehat{T}_{12}$ factor as:
\begin{align*}
    \left((\widehat{T}_{12})_{im}\right)_{ac} = (\widehat{T}_{12})_{i}^{[-1]}\delta_{i-1,m}\delta_{a,1}\delta_{c,\ell_{2}} + (\widehat{T}_{12})_{i}^{[0]}\delta_{i,m}\delta_{a,\ell_{1}}\delta_{c,1} , \\ i = 1,2,...,\ell_{1}, \ m = 1,2,...,\ell_{2}, \ a = 1,2,...,|\mathcal{D}_{1}^{i}|, \ c = 1,2,...,|\mathcal{D}_{2}^{m}| ,
\end{align*}
where, just as in eq.\ \ref{eq:parametrization_M_tridiagonal}, $(\widehat{T}_{12})_{i}^{[\pm 1]}$ and $(\widehat{T}_{12})_{i}^{[0]}$ are dense blocks populating the block matrix, and the indices $a$ and $c$ run over the layers in the sub-domains in $\mathcal{D}_{1}$ and $\mathcal{D}_{2}$, respectively. Correspondingly:
\begin{align*}
    \left((\widehat{T}_{21})_{nj}\right)_{db} = (\widehat{T}_{12})_{n}^{[+1]}\delta_{n+1,j}\delta_{d,\ell_{1}}\delta_{b,1} + (\widehat{T}_{12})_{n}^{[0]}\delta_{n,j}\delta_{d,1}\delta_{b,\ell_{2}} , \\ n = 1,2,...,\ell_{1}, \ j = 1,2,...,\ell_{2}, \ d = 1,2,...,|\mathcal{D}_{1}^{n}|, \ b = 1,2,...,|\mathcal{D}_{2}^{j}| .
\end{align*}
Then:
\begin{align*}
\left((\widehat{T}_{12}\widehat{T}_{22}^{-1}\widehat{T}_{21})_{ij}\right)_{ab} &= \sum_{m,n}\sum_{c,d} \left((\widehat{T}_{12})_{im}\right)_{ac} \left( (\widehat{T}_{22}^{-1})_{mn}\delta_{mn} \right)_{cd} \left((\widehat{T}_{21})_{nj}\right)_{db} \\
&= \sum_{m} \sum_{c,d} \left( (\widehat{T}_{12})_{i}^{[-1]}\delta_{i-1,m}\delta_{a,1}\delta_{c,\ell_{2}} + (\widehat{T}_{12})_{i}^{[0]}\delta_{i,m}\delta_{a,\ell_{1}}\delta_{c,1} \right) \left( (\widehat{T}_{22}^{-1})_{mm} \right)_{cd} \left( (\widehat{T}_{12})_{m}^{[+1]}\delta_{m+1,j}\delta_{d,\ell_{1}}\delta_{b,1} + (\widehat{T}_{12})_{m}^{[0]}\delta_{m,j}\delta_{d,1}\delta_{b,\ell_{2}} \right) \\
&= \sum_{m} \sum_{c} \left( (\widehat{T}_{12})_{i}^{[-1]}\delta_{i-1,m}\delta_{a,1}\delta_{c,\ell_{2}} + (\widehat{T}_{12})_{i}^{[0]}\delta_{i,m}\delta_{a,\ell_{1}}\delta_{c,1} \right) \left( \left( (\widehat{T}_{22}^{-1})_{mm} \right)_{c,\ell_{1}}(\widehat{T}_{12})_{m}^{[+1]}\delta_{m+1,j}\delta_{b,1} + \left( (\widehat{T}_{22}^{-1})_{mm} \right)_{c,1}(\widehat{T}_{12})_{m}^{[0]}\delta_{m,j}\delta_{b,\ell_{2}} \right) \\
&= (\widehat{T}_{12})_{i}^{[-1]}\delta_{i-1,j-1}\delta_{a,1} \left( (\widehat{T}_{22}^{-1})_{j-1,j-1} \right)_{\ell_{2},\ell_{1}}(\widehat{T}_{12})_{j-1}^{[+1]}\delta_{b,1} + (\widehat{T}_{12})_{i}^{[-1]}\delta_{i-1,j}\delta_{a,1} \left( (\widehat{T}_{22}^{-1})_{jj} \right)_{\ell_{2},1}(\widehat{T}_{12})_{j}^{[0]}\delta_{b,\ell_{2}} \\
&+ (\widehat{T}_{12})_{i}^{[0]}\delta_{i,j-1}\delta_{a,\ell_{1}} \left( (\widehat{T}_{22}^{-1})_{j-1,j-1} \right)_{1,\ell_{1}}(\widehat{T}_{12})_{j-1}^{[+1]}\delta_{b,1} + (\widehat{T}_{12})_{i}^{[0]}\delta_{i,j}\delta_{a,\ell_{1}} \left( (\widehat{T}_{22}^{-1})_{jj} \right)_{1,1}(\widehat{T}_{12})_{j}^{[0]}\delta_{b,\ell_{2}} .
\end{align*}
By examining the Kronecker deltas in this final expression, we can rigorously determine the resulting block sparsity pattern. The presence of terms proportional to $\delta_{i,j}$ and $\delta_{i-1,j-1}$ dictates self-interactions within the individual sub-domains. More importantly, the cross-terms containing $\delta_{i-1,j}$ and $\delta_{i,j-1}$ explicitly couple a given sub-domain $i$ exclusively to its immediate nearest neighbors, $j=i+1$ and $j=i-1$. Because all other domain combinations evaluate to zero, the non-zero elements of the product $\widehat{T}_{12}\widehat{T}_{22}^{-1}\widehat{T}_{21}$ are strictly confined to the main diagonal and the first upper and lower sub-diagonals at the sub-domain level. Consequently, this product exhibits an almost block tridiagonal structure. Since the initial sub-matrix $\widehat{T}_{11}$ shares this exact same property, their difference---the Schur complement $\widehat{T}_S$---inherently preserves a block tridiagonal structure in general.
\end{proof}
\begin{figure}[h]
    \centering
        \begin{tikzpicture}[scale=0.8,
            bx/.style={draw=olive!80!black, thick, minimum size=0.55cm, inner sep=0pt, fill=white},
            bxB/.style={draw=blue!80!black, thick, minimum size=0.55cm, inner sep=0pt, fill=white},
            mathnode/.style={font=\Huge}
        ]
        
        \newcommand{\dotTL}[2]{\fill[#2] ([xshift=2.5pt, yshift=-4.5pt]#1.north west) +(-1pt,-1pt) rectangle +(3pt,3pt);}
        \newcommand{\dotTR}[2]{\fill[#2] ([xshift=-4.5pt, yshift=-4.5pt]#1.north east) +(-1pt,-1pt) rectangle +(3pt,3pt);}
        \newcommand{\dotBL}[2]{\fill[#2] ([xshift=2.5pt, yshift=2.8pt]#1.south west) +(-1pt,-1pt) rectangle +(3pt,3pt);}
        \newcommand{\dotBR}[2]{\fill[#2] ([xshift=-4.5pt, yshift=2.8pt]#1.south east) +(-1pt,-1pt) rectangle +(3pt,3pt);}
        
        \newcommand{\drawbrackets}[4]{
            \draw[thick, rounded corners=0.5pt] ([xshift=-4pt,yshift=4pt]#1.north west) -- +(-4pt,0) |- ([xshift=-4pt,yshift=-4pt]#2.south west);
            \draw[thick, rounded corners=0.5pt] ([xshift=4pt,yshift=4pt]#3.north east) -- +(4pt,0) |- ([xshift=4pt,yshift=-4pt]#4.south east);
        }
        
        
        \begin{scope}[shift={(-1.7,0)}]
        \foreach \i in {1,...,4} {
            \foreach \j in {1,...,4} {
                \node[bx] (m1-\i-\j) at (\j*0.8, -\i*0.8) {};
                \ifnum\i=\j \dotBL{m1-\i-\j}{red} \fi
                \pgfmathtruncatemacro{\im}{\i-1}
                \ifnum\j=\im \dotTR{m1-\i-\j}{red} \fi
            }
        }
        \drawbrackets{m1-1-1}{m1-4-1}{m1-1-4}{m1-4-4}
        \end{scope}
        
        \begin{scope}[shift={(2.5,0)}] 
        \foreach \i in {1,...,4} {
            \foreach \j in {1,...,4} {
                \node[bxB] (m2-\i-\j) at (\j*0.8, -\i*0.8) {};
                \ifnum\i=\j 
                    \dotTL{m2-\i-\j}{red} 
                    \dotBR{m2-\i-\j}{red} 
                    \dotTR{m2-\i-\j}{red} 
                    \dotBL{m2-\i-\j}{red} 
                \fi
            }
        }
        \drawbrackets{m2-1-1}{m2-4-1}{m2-1-4}{m2-4-4}
        \end{scope}
        
        
        \begin{scope}[shift={(6.7,0)}]
        \foreach \i in {1,...,4} {
            \foreach \j in {1,...,4} {
                \node[bx] (m3-\i-\j) at (\j*0.8, -\i*0.8) {};
                \ifnum\i=\j
                    \ifnum\i=4 \dotTR{m3-\i-\j}{red} \else \dotTR{m3-\i-\j}{red} \fi
                \fi
                \pgfmathtruncatemacro{\ip}{\i+1}
                \ifnum\j=\ip \dotBL{m3-\i-\j}{red} \fi
            }
        }
        \drawbrackets{m3-1-1}{m3-4-1}{m3-1-4}{m3-4-4}
        \end{scope}
        
        \draw[decorate, decoration={brace, amplitude=10pt, mirror}, very thick, orange!90!red] 
            ([yshift=-10pt]m1-4-1.south west) -- ([yshift=-10pt]m1-4-1.south west -| m2-4-4.south east);
        
        
        \begin{scope}[shift={(0,-4.5)}]
        
        \begin{scope}[shift={(0,0)}]
        \foreach \i in {1,...,4} {
            \foreach \j in {1,...,4} {
                \node[bx] (m4-\i-\j) at (\j*0.8, -\i*0.8) {};
                \ifnum\i=\j
                    \dotBL{m4-\i-\j}{red}
                    \dotBR{m4-\i-\j}{red}
                \fi
                \pgfmathtruncatemacro{\im}{\i-1}
                \ifnum\j=\im
                    \dotTL{m4-\i-\j}{red}
                    \dotTR{m4-\i-\j}{red}
                \fi
            }
        }
        \drawbrackets{m4-1-1}{m4-4-1}{m4-1-4}{m4-4-4}
        \end{scope}
        
        \begin{scope}[shift={(4.0,0)}]
        \foreach \i in {1,...,4} {
            \foreach \j in {1,...,4} {
                \node[bx] (m5-\i-\j) at (\j*0.8, -\i*0.8) {};
                \ifnum\i=\j
                    \ifnum\i=5 \dotBL{m5-\i-\j}{red} \else \dotTR{m5-\i-\j}{red} \fi
                \fi
                \pgfmathtruncatemacro{\ip}{\i+1}
                \ifnum\j=\ip \dotBL{m5-\i-\j}{red} \fi
            }
        }
        \drawbrackets{m5-1-1}{m5-4-1}{m5-1-4}{m5-4-4}
        \end{scope}
        
        \node[mathnode] at (8.5, -2.1) {$=$};

        \node[mathnode] at (-1.2, -2.1) {$=$};
        
        \begin{scope}[shift={(9.0,0)}]
        \foreach \i in {1,...,4} {
            \foreach \j in {1,...,4} {
                \node[bx] (m6-\i-\j) at (\j*0.8, -\i*0.8) {};
                \ifnum\i=\j
                    \ifnum\i=1
                        \dotBR{m6-\i-\j}{red}
                    \else
                        \dotTL{m6-\i-\j}{red}
                        \dotBR{m6-\i-\j}{red}
                    \fi
                \fi
                \pgfmathtruncatemacro{\im}{\i-1}
                \ifnum\j=\im
                    \dotTR{m6-\i-\j}{orange!90!yellow}
                \fi
                \pgfmathtruncatemacro{\ip}{\i+1}
                \ifnum\j=\ip
                    \dotBL{m6-\i-\j}{orange!90!yellow}
                \fi
            }
        }
        \drawbrackets{m6-1-1}{m6-4-1}{m6-1-4}{m6-4-4}
        \end{scope}
        
        \end{scope} 
        
        \end{tikzpicture}    
    
    \caption{General data dependencies in the term $\widehat{T}_{12}\widehat{T}_{22}^{-1}\widehat{T}_{21}$ in the Schur complement of DDRGF, with $\ell_{1}=4$ and $\ell_{2}=4$.}
    \label{fig:parallel_RGF_Schur_assembly}
\end{figure}
\noindent We can also show that the matrix $\widehat{T}_{S}$ is block tridiagonal in a more visual way, see fig.\ \ref{fig:parallel_RGF_Schur_assembly}. There, the top three matrices represent the product $\widehat{T}_{12} \widehat{T}_{22}^{-1}\widehat{T}_{21}$. Each matrix is split into sub-matrices, with each sub-matrix in turn being composed of various dense sub-blocks; those sub-matrices correspond to the interaction of the various sub-domains illustrated in fig.\ \ref{fig:parallel_RGF_domains}. The dense (red or orange) blocks drawn in fig.\ \ref{fig:parallel_RGF_domains} are the only data involved in the product $\widehat{T}_{12} \widehat{T}_{22}^{-1}\widehat{T}_{21}$; for example, the sub-matrices along the diagonal of $\widehat{T}_{22}^{-1}$ are in principle dense, but the block sparsity in $\widehat{T}_{12}$ and $\widehat{T}_{21}$ leads to only needing the corner blocks from each sub-matrix in $\widehat{T}_{22}^{-1}$. The orange off-diagonal blocks in the bottom-right matrix in fig.\ \ref{fig:parallel_RGF_Schur_assembly} are the effective couplings between neighboring principal layers in $\mathcal{D}_{1}$, which interact via full sub-domains in $\mathcal{D}_{2}$. Due to this distant interaction, one might intuitively expect that those orange blocks could be omitted without much loss in the accuracy of the final result, while enhancing parallelism and reducing the computational cost. Our numerical experiments showed, however, that the omission of the orange blocks is not possible in our case, although this does not exclude the possibility of trimming them for other physical systems.
\noindent Now that we have shown that $\widehat{T}_{S}$ is also block tridiagonal, we can turn to analyzing the data dependencies that emerge in the computation of the various building blocks of $(\widehat{T}^{-1})_{\textrm{b}3\textrm{diag}}$, i.e., $((\widehat{T}^{-1})_{\textrm{b}3\textrm{diag}})_{ij}, \ i,j \in \{1,2\}$; see eq.\ \ref{eq:2x2_explicit_inverse_parallel_DDRGF}. We do this in a visual way, similarly to what was done for $\widehat{T}_{S}$ via fig.\ \ref{fig:parallel_RGF_domains}, from hereon; corresponding formalizations, such as that in prop.\ \ref{propo:DDRGF_Schur_complement_pattern}, can be also constructed, but we omit them here. Let us explore the four building blocks in eq.\ \ref{eq:2x2_explicit_inverse_parallel_DDRGF}, keeping eq.\ \ref{eq:mapping_b3diag_T_to_That} in mind.
\vskip 0.08in
\noindent \textbf{Data dependencies:}
\begin{itemize}
\item \textbf{$((\widehat{T}^{-1})_{\textrm{b}3\textrm{diag}})_{11}$}: from eq.\ 
\ref{eq:2x2_explicit_inverse_parallel_DDRGF}, this amounts to computing nonzero block elements from $\widehat{T}_{S}^{-1}$ that correspond to the nonzero block sparse pattern of $\widehat{T}_{11}$ in eq.\ \ref{eq:T_hat_overall_structure}. It is then sufficient to take $\textrm{b}3\textrm{diag}(\widehat{T}_{S}^{-1})$ for computing this part of $(\widehat{T}^{-1})_{\textrm{b}3\textrm{diag}}$.
\begin{figure}[h]
    \centering
        \begin{tikzpicture}[scale=0.975,
            bx/.style={draw=olive!80!black, thick, minimum size=0.55cm, inner sep=0pt, fill=white},
            bxB/.style={draw=blue!80!black, thick, minimum size=0.55cm, inner sep=0pt, fill=white},
            mathnode/.style={font=\Huge}
        ]
        
        \newcommand{\dotTL}[2]{\fill[#2] ([xshift=2.5pt, yshift=-4.5pt]#1.north west) +(-1pt,-1pt) rectangle +(3pt,3pt);}
        \newcommand{\dotTR}[2]{\fill[#2] ([xshift=-4.5pt, yshift=-4.5pt]#1.north east) +(-1pt,-1pt) rectangle +(3pt,3pt);}
        \newcommand{\dotBL}[2]{\fill[#2] ([xshift=2.5pt, yshift=2.8pt]#1.south west) +(-1pt,-1pt) rectangle +(3pt,3pt);}
        \newcommand{\dotBR}[2]{\fill[#2] ([xshift=-4.5pt, yshift=2.8pt]#1.south east) +(-1pt,-1pt) rectangle +(3pt,3pt);}

        \newcommand{\barL}[2]{\fill[#2] ([xshift=2.5pt, yshift=-13.75pt]#1.north west) +(-1pt,-1pt) rectangle +(3pt,12pt);}
        \newcommand{\barR}[2]{\fill[#2] ([xshift=12.0pt, yshift=-13.75pt]#1.north west) +(-1pt,-1pt) rectangle +(3pt,12pt);}
        
        \newcommand{\barB}[2]{\fill[#2] ([xshift=2.75pt, yshift=-13.75pt]#1.north west) +(-1pt,-1pt) rectangle +(12pt,3pt);}
        \newcommand{\barT}[2]{\fill[#2] ([xshift=2.75pt, yshift=-4.5pt]#1.north west) +(-1pt,-1pt) rectangle +(12pt,3pt);}

        \newcommand{\barDiag}[2]{%
            \draw[line width=2.5pt, #2] 
            ([xshift=2pt, yshift=-2pt]#1.north west) -- ([xshift=-2pt, yshift=2pt]#1.south east);%
        }
        \newcommand{\barDiagL}[2]{%
            \draw[line width=2.5pt, #2] 
            ([xshift=2pt, yshift=-6.5pt]#1.north west) -- ([xshift=-6.5pt, yshift=2pt]#1.south east);%
        }
        \newcommand{\barDiagR}[2]{%
            \draw[line width=2.5pt, #2] 
            ([xshift=6.5pt, yshift=-2pt]#1.north west) -- ([xshift=-1.7pt, yshift=6.5pt]#1.south east);%
        }

        \newcommand{\drawbrackets}[4]{
            \draw[thick, rounded corners=0.5pt] ([xshift=-4pt,yshift=4pt]#1.north west) -- +(-4pt,0) |- ([xshift=-4pt,yshift=-4pt]#2.south west);
            \draw[thick, rounded corners=0.5pt] ([xshift=4pt,yshift=4pt]#3.north east) -- +(4pt,0) |- ([xshift=4pt,yshift=-4pt]#4.south east);
        }


        \begin{scope}[shift={(-5.3,0)}] 
        \foreach \i in {1,...,4} {
            \foreach \j in {1,...,4} {
                \node[bxB] (m1-\i-\j) at (\j*0.8, -\i*0.8) {};
                \ifnum\i=\j 
                    \ifnum\i=4
                        \barL{m1-\i-\j}{red}
                    \else
                        \barL{m1-\i-\j}{red}
                        \barR{m1-\i-\j}{red}
                    \fi
                \fi
            }
        }
        \drawbrackets{m1-1-1}{m1-4-1}{m1-1-4}{m1-4-4}
        \end{scope}
                
        \begin{scope}[shift={(-1.7,0)}]
        \foreach \i in {1,...,4} {
            \foreach \j in {1,...,4} {
                \node[bx] (m2-\i-\j) at (\j*0.8, -\i*0.8) {};
                \ifnum\i=\j
                    \ifnum\i=4 \dotBR{m2-\i-\j}{red} \else \dotTR{m2-\i-\j}{red} \fi
                \fi
                \pgfmathtruncatemacro{\ip}{\i+1}
                \ifnum\j=\ip \dotBL{m2-\i-\j}{red} \fi
            }
        }
        \drawbrackets{m2-1-1}{m2-4-1}{m2-1-4}{m2-4-4}
        \end{scope}

        \begin{scope}[shift={(1.9,0)}] 
        \foreach \i in {1,...,4} {
            \foreach \j in {1,...,4} {
                \node[bxB] (m3-\i-\j) at (\j*0.8, -\i*0.8) {};
                \ifnum\i=\j
                    \ifnum\i=1
                        \dotBR{m3-\i-\j}{red} 
                    \else
                        \dotTL{m3-\i-\j}{red} 
                        \dotBR{m3-\i-\j}{red} 
                    \fi
                \fi
                \pgfmathtruncatemacro{\ip}{\i+1}
                \ifnum\j=\ip \dotBL{m3-\i-\j}{red} \fi
                \pgfmathtruncatemacro{\im}{\i-1}
                \ifnum\j=\im \dotTR{m3-\i-\j}{red} \fi
            }
        }
        \drawbrackets{m3-1-1}{m3-4-1}{m3-1-4}{m3-4-4}
        \end{scope}

        
        \begin{scope}[shift={(5.5,0)}]
        \foreach \i in {1,...,4} {
            \foreach \j in {1,...,4} {
                \node[bx] (m4-\i-\j) at (\j*0.8, -\i*0.8) {};
                \ifnum\i=\j \dotBL{m4-\i-\j}{red} \fi
                \pgfmathtruncatemacro{\im}{\i-1}
                \ifnum\j=\im \dotTR{m4-\i-\j}{red} \fi
            }
        }
        \drawbrackets{m4-1-1}{m4-4-1}{m4-1-4}{m4-4-4}
        \end{scope}
        
        \begin{scope}[shift={(9.1,0)}] 
        \foreach \i in {1,...,4} {
            \foreach \j in {1,...,4} {
                \node[bxB] (m5-\i-\j) at (\j*0.8, -\i*0.8) {};
                \ifnum\i=\j 
                    \ifnum\i=4
                        \barT{m5-\i-\j}{red}
                    \else
                        \barT{m5-\i-\j}{red}
                        \barB{m5-\i-\j}{red}
                    \fi
                \fi
            }
        }
        \drawbrackets{m5-1-1}{m5-4-1}{m5-1-4}{m5-4-4}
        \end{scope}

        \draw[decorate, decoration={brace, amplitude=10pt, mirror}, very thick, orange!90!red] 
            ([yshift=-10pt]m1-4-1.south west) -- ([yshift=-10pt]m1-4-1.south west -| m2-4-4.south east);
        \draw[decorate, decoration={brace, amplitude=10pt, mirror}, very thick, orange!90!red] 
            ([yshift=-10pt]m4-4-1.south west) -- ([yshift=-10pt]m4-4-1.south west -| m5-4-4.south east);

        
        \begin{scope}[shift={(-2.5,-4.5)}]
        \foreach \i in {1,...,4} {
            \foreach \j in {1,...,4} {
                \node[bx] (m1-\i-\j) at (\j*0.8, -\i*0.8) {};
                \ifnum\i=\j \barR{m1-\i-\j}{red} \fi
                \pgfmathtruncatemacro{\ip}{\i+1}
                \ifnum\j=\ip \barL{m1-\i-\j}{red} \fi
            }
        }
        \drawbrackets{m1-1-1}{m1-4-1}{m1-1-4}{m1-4-4}
        \end{scope}
        
        \begin{scope}[shift={(1.9,-4.5)}] 
        \foreach \i in {1,...,4} {
            \foreach \j in {1,...,4} {
                \node[bxB] (m2-\i-\j) at (\j*0.8, -\i*0.8) {};
                \ifnum\i=\j
                    \ifnum\i=1
                        \dotBR{m2-\i-\j}{red} 
                    \else
                        \dotTL{m2-\i-\j}{red} 
                        \dotBR{m2-\i-\j}{red} 
                    \fi
                \fi
                \pgfmathtruncatemacro{\ip}{\i+1}
                \ifnum\j=\ip \dotBL{m2-\i-\j}{red} \fi
                \pgfmathtruncatemacro{\im}{\i-1}
                \ifnum\j=\im \dotTR{m2-\i-\j}{red} \fi
            }
        }
        \drawbrackets{m2-1-1}{m2-4-1}{m2-1-4}{m2-4-4}
        \end{scope}
        
        \node[mathnode] at (-3.0, -6.6) {$=$};

        
        \begin{scope}[shift={(6.3,-4.5)}]
        \foreach \i in {1,...,4} {
            \foreach \j in {1,...,4} {
                \node[bx] (m3-\i-\j) at (\j*0.8, -\i*0.8) {};
                \ifnum\i=\j
                    \barB{m3-\i-\j}{red}
                \fi
                \pgfmathtruncatemacro{\im}{\i-1}
                \ifnum\j=\im \barT{m3-\i-\j}{red} \fi
            }
        }
        \drawbrackets{m3-1-1}{m3-4-1}{m3-1-4}{m3-4-4}
        \end{scope}
        
        \draw[decorate, decoration={brace, amplitude=10pt, mirror}, very thick, orange!90!red] 
            ([yshift=-10pt]m1-4-1.south west) -- ([yshift=-10pt]m1-4-1.south west -| m2-4-4.south east);
        
        
        \begin{scope}[shift={(0,-9.0)}]
        
        \begin{scope}[shift={(0,0)}]
        \foreach \i in {1,...,4} {
            \foreach \j in {1,...,4} {
                \node[bx] (m1-\i-\j) at (\j*0.8, -\i*0.8) {};
                \ifnum\i=\j \barR{m1-\i-\j}{red} \fi
                \pgfmathtruncatemacro{\ip}{\i+1}
                \ifnum\j=\ip \barL{m1-\i-\j}{red} \fi
            }
        }
        \drawbrackets{m1-1-1}{m1-4-1}{m1-1-4}{m1-4-4}
        \end{scope}
        
        \begin{scope}[shift={(4.0,0)}]
        \foreach \i in {1,...,4} {
            \foreach \j in {1,...,4} {
                \node[bx] (m2-\i-\j) at (\j*0.8, -\i*0.8) {};
                \ifnum\i=\j
                    \barB{m2-\i-\j}{red}
                \fi
                \pgfmathtruncatemacro{\im}{\i-1}
                \ifnum\j=\im \barT{m2-\i-\j}{red} \fi
            }
        }
        \drawbrackets{m2-1-1}{m2-4-1}{m2-1-4}{m2-4-4}
        \end{scope}
        
        \node[mathnode] at (-0.6, -2.1) {$=$};

        \node[mathnode] at (8.5, -2.1) {$\rightarrow$};
        
        \begin{scope}[shift={(9.0,0)}]
        \foreach \i in {1,...,4} {
            \foreach \j in {1,...,4} {
                \node[bx] (m3-\i-\j) at (\j*0.8, -\i*0.8) {};
                \ifnum\i=\j
                    \barDiag{m3-\i-\j}{red}
                    \barDiagL{m3-\i-\j}{red}
                    \barDiagR{m3-\i-\j}{red}
                \fi
            }
        }
        \drawbrackets{m3-1-1}{m3-4-1}{m3-1-4}{m3-4-4}
        \end{scope}

        \end{scope} 
        \end{tikzpicture}
    \caption{Dependencies in the term $\widehat{T}_{22}^{-1}\widehat{T}_{21}\widehat{T}_{S}^{-1}\widehat{T}_{12}\widehat{T}_{22}^{-1}$ in DDRGF, with $\ell_{1}=4$ and $\ell_{2}=4$.}\label{fig:parallel_RGF_term_22}
\end{figure}
\item \textbf{$((\widehat{T}^{-1})_{\textrm{b}3\textrm{diag}})_{22}$}: this is a more complicated term than the previous one. We need the part of $\widehat{T}_{22}^{-1} + \widehat{T}_{22}^{-1}\widehat{T}_{21}\widehat{T}_{S}^{-1}\widehat{T}_{12}\widehat{T}_{22}^{-1}$ matching the nonzero block sparse pattern of $\widehat{T}_{22}$. Let us skip the $\widehat{T}_{22}^{-1}$ part, which should be easy to infer at this point. For the second term, $\widehat{T}_{22}^{-1}\widehat{T}_{21}\widehat{T}_{S}^{-1}\widehat{T}_{12}\widehat{T}_{22}^{-1}$, we can understand the data dependencies via fig.\ \ref{fig:parallel_RGF_term_22}, where the red bars represent full block columns or full block rows. The arrow pointing towards the bottom right matrix indicates that we want to extract a pattern matching that of $\widehat{T}_{22}$. This term makes it clear that computing $\textrm{b}3\textrm{diag}(\widehat{T}_{S}^{-1})$ is not only sufficient but necessary when dealing with both $((\widehat{T}^{-1})_{\textrm{b}3\textrm{diag}})_{11}$ and $((\widehat{T}^{-1})_{\textrm{b}3\textrm{diag}})_{22}$. Fig.\ \ref{fig:parallel_RGF_term_22} also tells us what is needed from $\widehat{T}_{22}^{-1}$ to compute this part of $(\widehat{T}^{-1})_{\textrm{b}3\textrm{diag}}$, being clear from the data dependencies there that we need to compute the following elements ($i=1,2,...,\ell_{2}$):
$$ \textrm{b}3\textrm{diag}\left((\widehat{T}_{22}^{-1})_{ii}\right), $$
$$ \left((\widehat{T}_{22}^{-1})_{ii}\right)_{ab}, \ b = 1, a = 2,3,...,|\mathcal{D}_{2}^{i}| , \ \ \left((\widehat{T}_{22}^{-1})_{ii}\right)_{ab}, \ b = |\mathcal{D}_{2}^{i}|, a = 1,2,3,...,|\mathcal{D}_{2}^{i}|-1 , $$
$$ \left((\widehat{T}_{22}^{-1})_{ii}\right)_{ab}, \ a = 1, b = 2,3,...,|\mathcal{D}_{2}^{i}|-1 , \ \ \left((\widehat{T}_{22}^{-1})_{ii}\right)_{ab}, \ a = |\mathcal{D}_{2}^{i}|, b = 2,3,...,|\mathcal{D}_{2}^{i}|-1 , $$
That is, within each sub-domain of $\mathcal{D}_{2}$, we take the block tridiagonal part of the inverse plus any missing blocks to form the internal halos. Finally, due to the structural symmetry of this term, the second row of matrices in fig.\ \ref{fig:parallel_RGF_term_22} can be grouped either from the left or from the right, i.e., that operation is associative.
\item \textbf{$((\widehat{T}^{-1})_{\textrm{b}3\textrm{diag}})_{12}$ and $((\widehat{T}^{-1})_{\textrm{b}3\textrm{diag}})_{21}$}: the two off-diagonal parts of $(\widehat{T}^{-1})_{\textrm{b}3\textrm{diag}}$ are structurally symmetric, therefore we illustrate one of them only. Fig.\ \ref{fig:parallel_RGF_term_12} displays the data dependencies in this case. The data dependencies are simpler in these two cases, compared to $((\widehat{T}^{-1})_{\textrm{b}3\textrm{diag}})_{11}$ and $((\widehat{T}^{-1})_{\textrm{b}3\textrm{diag}})_{22}$.
\end{itemize}
\begin{figure}
    \centering
        \begin{tikzpicture}[scale=0.9,
            bx/.style={draw=olive!80!black, thick, minimum size=0.55cm, inner sep=0pt, fill=white},
            bxB/.style={draw=blue!80!black, thick, minimum size=0.55cm, inner sep=0pt, fill=white},
            mathnode/.style={font=\Huge}
        ]
        
        \newcommand{\dotTL}[2]{\fill[#2] ([xshift=2.5pt, yshift=-4.5pt]#1.north west) +(-1pt,-1pt) rectangle +(3pt,3pt);}
        \newcommand{\dotTR}[2]{\fill[#2] ([xshift=-4.5pt, yshift=-4.5pt]#1.north east) +(-1pt,-1pt) rectangle +(3pt,3pt);}
        \newcommand{\dotBL}[2]{\fill[#2] ([xshift=2.5pt, yshift=2.8pt]#1.south west) +(-1pt,-1pt) rectangle +(3pt,3pt);}
        \newcommand{\dotBR}[2]{\fill[#2] ([xshift=-4.5pt, yshift=2.8pt]#1.south east) +(-1pt,-1pt) rectangle +(3pt,3pt);}

        \newcommand{\barL}[2]{\fill[#2] ([xshift=2.5pt, yshift=-13.75pt]#1.north west) +(-1pt,-1pt) rectangle +(3pt,12pt);}
        \newcommand{\barR}[2]{\fill[#2] ([xshift=12.0pt, yshift=-13.75pt]#1.north west) +(-1pt,-1pt) rectangle +(3pt,12pt);}
        
        \newcommand{\barB}[2]{\fill[#2] ([xshift=2.75pt, yshift=-13.75pt]#1.north west) +(-1pt,-1pt) rectangle +(12pt,3pt);}
        \newcommand{\barT}[2]{\fill[#2] ([xshift=2.75pt, yshift=-4.5pt]#1.north west) +(-1pt,-1pt) rectangle +(12pt,3pt);}

        \newcommand{\barDiag}[2]{%
            \draw[line width=2.5pt, #2] 
            ([xshift=2pt, yshift=-2pt]#1.north west) -- ([xshift=-2pt, yshift=2pt]#1.south east);%
        }
        \newcommand{\barDiagL}[2]{%
            \draw[line width=2.5pt, #2] 
            ([xshift=2pt, yshift=-6.5pt]#1.north west) -- ([xshift=-6.5pt, yshift=2pt]#1.south east);%
        }
        \newcommand{\barDiagR}[2]{%
            \draw[line width=2.5pt, #2] 
            ([xshift=6.5pt, yshift=-2pt]#1.north west) -- ([xshift=-1.7pt, yshift=6.5pt]#1.south east);%
        }

        \newcommand{\drawbrackets}[4]{
            \draw[thick, rounded corners=0.5pt] ([xshift=-4pt,yshift=4pt]#1.north west) -- +(-4pt,0) |- ([xshift=-4pt,yshift=-4pt]#2.south west);
            \draw[thick, rounded corners=0.5pt] ([xshift=4pt,yshift=4pt]#3.north east) -- +(4pt,0) |- ([xshift=4pt,yshift=-4pt]#4.south east);
        }


        \begin{scope}[shift={(-2.3,0)}] 
        \foreach \i in {1,...,4} {
            \foreach \j in {1,...,4} {
                \node[bxB] (m1-\i-\j) at (\j*0.8, -\i*0.8) {};
                \ifnum\i=\j 
                    \ifnum\i=4
                        \dotTL{m1-\i-\j}{red}
                    \else
                        \dotTL{m1-\i-\j}{red}
                        \dotBR{m1-\i-\j}{red}
                    \fi
                \fi
            }
        }
        \drawbrackets{m1-1-1}{m1-4-1}{m1-1-4}{m1-4-4}
        \end{scope}
                
        \begin{scope}[shift={(1.8,0)}]
        \foreach \i in {1,...,4} {
            \foreach \j in {1,...,4} {
                \node[bx] (m2-\i-\j) at (\j*0.8, -\i*0.8) {};
                \ifnum\i=\j
                    \ifnum\i=4 \dotBR{m2-\i-\j}{red} \else \dotTR{m2-\i-\j}{red} \fi
                \fi
                \pgfmathtruncatemacro{\ip}{\i+1}
                \ifnum\j=\ip \dotBL{m2-\i-\j}{red} \fi
            }
        }
        \drawbrackets{m2-1-1}{m2-4-1}{m2-1-4}{m2-4-4}
        \end{scope}

        \begin{scope}[shift={(5.9,0)}] 
        \foreach \i in {1,...,4} {
            \foreach \j in {1,...,4} {
                \node[bxB] (m3-\i-\j) at (\j*0.8, -\i*0.8) {};
                \ifnum\i=\j
                    \ifnum\i=1
                        \dotBR{m3-\i-\j}{red} 
                    \else
                        \dotTL{m3-\i-\j}{red} 
                        \dotBR{m3-\i-\j}{red} 
                    \fi
                \fi
            }
        }
        \drawbrackets{m3-1-1}{m3-4-1}{m3-1-4}{m3-4-4}
        \end{scope}

        \draw[decorate, decoration={brace, amplitude=10pt, mirror}, very thick, orange!90!red] 
            ([yshift=-10pt]m1-4-1.south west) -- ([yshift=-10pt]m1-4-1.south west -| m2-4-4.south east);

        
        \begin{scope}[shift={(-0.5,-4.5)}]
        \foreach \i in {1,...,4} {
            \foreach \j in {1,...,4} {
                \node[bx] (m1-\i-\j) at (\j*0.8, -\i*0.8) {};
                \ifnum\i=\j \dotTR{m1-\i-\j}{red} \fi
                \pgfmathtruncatemacro{\ip}{\i+1}
                \ifnum\j=\ip \dotBL{m1-\i-\j}{red} \fi
            }
        }
        \drawbrackets{m1-1-1}{m1-4-1}{m1-1-4}{m1-4-4}
        \end{scope}
        
        \begin{scope}[shift={(3.6,-4.5)}] 
        \foreach \i in {1,...,4} {
            \foreach \j in {1,...,4} {
                \node[bxB] (m2-\i-\j) at (\j*0.8, -\i*0.8) {};
                \ifnum\i=\j
                    \ifnum\i=1
                        \dotBR{m2-\i-\j}{red} 
                    \else
                        \dotTL{m2-\i-\j}{red} 
                        \dotBR{m2-\i-\j}{red} 
                    \fi
                \fi
            }
        }
        \drawbrackets{m2-1-1}{m2-4-1}{m2-1-4}{m2-4-4}
        \end{scope}
        
        \node[mathnode] at (-1.0, -6.6) {$=$};

        \node[mathnode] at (8.5, -6.6) {$\rightarrow$};
        
        \begin{scope}[shift={(9.0,-4.5)}]
        \foreach \i in {1,...,4} {
            \foreach \j in {1,...,4} {
                \node[bx] (m3-\i-\j) at (\j*0.8, -\i*0.8) {};
                \ifnum\i=\j \dotTR{m3-\i-\j}{red} \fi
                \pgfmathtruncatemacro{\ip}{\i+1}
                \ifnum\j=\ip \dotBL{m3-\i-\j}{red} \fi
            }
        }
        \drawbrackets{m3-1-1}{m3-4-1}{m3-1-4}{m3-4-4}
        \end{scope}

        \end{tikzpicture}

        \caption{Data dependencies in the computation of $((\widehat{T}^{-1})_{\textrm{b}3\textrm{diag}})_{21}$ from the term  $\widehat{T}_{22}^{-1}\widehat{T}_{21}\widehat{T}_{S}^{-1}$ in DDRGF with $\ell_{1}=4$ and $\ell_{2}=4$.}
    \label{fig:parallel_RGF_term_12}
\end{figure}
\vskip 0.08in
\noindent As we can see, the data dependencies in DDRGF are non-trivial. To clarify this further, fig.\ \ref{fig:nonzero_T22inv_pattern_needed} shows, for a particular example where $i$ is a sub-domain within $\mathcal{D}_{2}$ and such that $|\mathcal{D}_{2}^{i}| = 8$ is the number of principal layers in that sub-domain, the blocks needed from the inverse of $(\widehat{T}_{22})_{ii}$ to compute the wanted solution $\mathrm{b3diag}(T^{-1})$. Pseudo-code for DDRGF is presented in alg.\ \ref{alg:ddrgf}, where we have clearly outlined the parallel regions.
\begin{algorithm}
\caption{DDRGF for block tridiagonal systems}
\label{alg:ddrgf}
\begin{algorithmic}[1]
\Require $T$ (block tridiagonal matrix), $\mathcal{D}_1, \mathcal{D}_2$ (domain decompositions)
\Ensure $\text{b3diag}(T^{-1})$
\State \textbf{// Domain Reordering}
\State $\hat{T} \gets R_3 T R_3^T = \begin{pmatrix} \hat{T}_{11} & \hat{T}_{12} \\ \hat{T}_{21} & \hat{T}_{22} \end{pmatrix}$

\State \textbf{// Step 1: Invert $\mathcal{D}_2$ sub-domains (Parallel over $n_{\text{tasks}}$)}
\ForAll{sub-domain $i \in \mathcal{D}_2$} \Comment{Execute concurrently}
    \State Compute $(\hat{T}_{22}^{-1})_{ii}$ \Comment{Requires extended RGF for internal halos}
\EndFor

\State \textbf{// Step 2: Build the Schur Complement (Parallel)}
\State $\hat{T}_S \gets \hat{T}_{11} - \hat{T}_{12} \hat{T}_{22}^{-1} \hat{T}_{21}$

\State \textbf{// Step 3: Invert the Schur Complement (Recursive or Sequential)}
\State $\hat{T}_S^{-1} \gets \text{b3diag}(\hat{T}_S^{-1})$ \Comment{Via RGF (Alg 1) or recursive DDRGF}

\State \textbf{// Step 4: Compute final blocks of $\hat{T}^{-1}_{\text{b3diag}}$ (Parallel)}
\State $((\hat{T}^{-1})_{\text{b3diag}})_{11} \gets \hat{T}_S^{-1}$
\ForAll{sub-domain $i \in \mathcal{D}_2$}
    \State $((\hat{T}^{-1})_{\text{b3diag}})_{12} \gets -(\hat{T}_S^{-1} \hat{T}_{12} \hat{T}_{22}^{-1})_{\text{b3diag}}$
    \State $((\hat{T}^{-1})_{\text{b3diag}})_{21} \gets -(\hat{T}_{22}^{-1} \hat{T}_{21} \hat{T}_S^{-1})_{\text{b3diag}}$
    \State $((\hat{T}^{-1})_{\text{b3diag}})_{22} \gets (\hat{T}_{22}^{-1} + \hat{T}_{22}^{-1} \hat{T}_{21} \hat{T}_S^{-1} \hat{T}_{12} \hat{T}_{22}^{-1})_{\text{b3diag}}$
\EndFor

\State \textbf{// Step 5: Inverse Reordering}
\State $\text{b3diag}(T^{-1}) \gets R_3^T ((\hat{T}^{-1})_{\text{b3diag}}) R_3$
\end{algorithmic}
\end{algorithm}
\begin{figure}
    \centering
        \begin{tikzpicture}[scale=0.4]
            \definecolor{boxred}{RGB}{245, 65, 25}
            \definecolor{linepurple}{RGB}{140, 20, 160}
            \definecolor{frameblue}{RGB}{20, 80, 210}
        
            \draw[frameblue, line width=4pt] (-0.3, -0.3) rectangle (8.3, 8.3);
        
            \foreach \x in {0,...,7} {
                \foreach \y in {0,...,7} {
                    
                    \pgfmathsetmacro{\iswhite}{
                        (\x==1 && \y==2) || 
                        (\x==2 && \y==2) || 
                        (\x==3 && \y==2) || 
                        (\x==3 && \y==1) ||
                        (\x==2 && \y==1) ||
                        (\x==1 && \y==1) ||
                        (\x==4 && \y==1) ||
                        (\x==1 && \y==4) ||
                        (\x==6 && \y==3) ||
                        (\x==6 && \y==6) ||
                        (\x==5 && \y==6) ||
                        (\x==4 && \y==6) ||
                        (\x==3 && \y==6) ||
                        (\x==1 && \y==3) || 
                        (\x==2 && \y==3) || 
                        (\x==5 && \y==4) || 
                        (\x==6 && \y==4) || 
                        (\x==4 && \y==5) || 
                        (\x==5 && \y==5) || 
                        (\x==6 && \y==5) ? 1 : 0
                    }
                    
                    \ifnum\iswhite=0
                        \draw[linepurple, thick, fill=boxred] (\x, \y) rectangle ++(1, 1);
                    \fi
                }
            }
        \end{tikzpicture}
    \caption{For a particular sub-domain $i$ in $\mathcal{D}_{2}$, with $|\mathcal{D}_{2}^{i}| = 8$, these are the nonzero blocks needed from $(\widehat{T}_{22})_{ii}^{-1}$ to compute the wanted solution $\mathrm{b3diag}(T^{-1})$.}
    \label{fig:nonzero_T22inv_pattern_needed}
\end{figure}
\vskip 0.08in
\noindent\textbf{Cost model:} to get a better understanding of the expected behavior of DDRGF, we write a cost model for it. Just as with RGF, we express the execution cost relative to a single GEMM operation. To this end, first note that a particular difference of DDRGF compared to RGF is that, for the sub-domains in $\mathcal{D}_{2}$ (see fig.\ \ref{fig:parallel_RGF_domains}), we need to compute fuller outputs when RGF is applied to their corresponding block tridiagonal matrices. Namely, we need to compute more than the block tridiagonal of $\widehat{T}_{22}^{-1}$, as discussed above (see fig.\ \ref{fig:nonzero_T22inv_pattern_needed}).
The number of extra GEMMs needed to compute the full inverse with RGF when $\ell\leq4$ is:
$$ r_{RGF}^{extra}(\ell) =
\begin{cases} 
 0 & \text{if } \ell \leq 2 \\
 1{\cdot}2       & \text{if } \ell = 3 \\
 3{\cdot}2       & \text{if } \ell = 4
\end{cases} . $$
To rigorously define the cost of DDRGF, we must clarify its recursive nature. DDRGF is applied layer by layer (or ``level by level''). At any given recursion level $k$, the active physical system of size $\ell^{(k)}$ is partitioned into non-overlapping, repeating domain pairs $(\mathcal{D}_1^i, \mathcal{D}_2^i)$. To guarantee perfect load balancing among parallel workers, we strictly enforce that all $\mathcal{D}_{1}$ sub-domains at level $k$ contain exactly $s_{1}^{(k)}$ principal layers, and all $\mathcal{D}_{2}$ sub-domains contain exactly $s_{2}^{(k)}$ principal layers. For the purpose of this analytical cost model and the subsequent numerical experiments, we assume a geometrically homogeneous system where the dimension of the principal layers ($b_s$) remains constant across the device. Because the structure is uniform, the linear algebra operations executed within each domain pair are identical, allowing us to encapsulate the workload of processing one such domain pair as a single standard ``task''. While realistic physical systems can often be structurally heterogeneous with varying cross-sections, we consider uniform systems here for theoretical simplicity. Nevertheless, the cost model derived below is fundamentally adaptable: for inhomogeneous systems, the algorithm can be straightforwardly modified to compute layer-specific block weights to find the most adequate, load-balanced partitioning. Therefore, for our uniform analysis, the total number of independent tasks generated at level $k$ is simply the total number of layers divided by the footprint of one domain pair: $n_{tasks}^{(k)} = \lceil \ell^{(k)} / (s_{1}^{(k)} + s_{2}^{(k)}) \rceil$.\\
To model the parallel execution time for a single recursion level, we drop the superscript $(k)$ for the rest of this section for notational simplicity. We assume the $n_{tasks}$ independent tasks are distributed evenly across $n_{threads}$ hardware threads. Under an ideal load-balancing assumption, the wall-time cost scales by a factor of approximately $1/n_{threads}$ relative to the total accumulated work.\\
Consequently, the contribution to the total parallel cost of DDRGF due to the computation of $\widehat{T}_{22}^{-1}$ is quantified as:
$$ r_{22}^{inv}(s_{2},b_{s}) := \frac{n_{tasks}}{n_{threads}} (r_{RGF}(s_{2},b_{s}) + r_{RGF}^{extra}(s_{2})) . $$
Next, we need to compute both $\widehat{T}_{22}^{-1}\widehat{T}_{21}$ and $\widehat{T}_{12}\widehat{T}_{22}^{-1}$, one of which will be used to build DDRGF's Schur complement $\widehat{T}_{S}$, while the other one will be used in a later computation. These two costs amount to:
$$ r_{11}^{12}(s_{2}) = r_{11}^{21}(s_{2}) := 2s_{2}\frac{n_{tasks}}{n_{threads}} , $$
and then to finish building the Schur complement we need the following extra cost:
$$ r_{S} = 4\frac{n_{tasks}}{n_{threads}} . $$
The cost for the inverse of the Schur complement is less simple to write, as we might choose to use sequential RGF directly to get the block tridiagonal of its inverse, or we might call DDRGF recursively (moving to level $k+1$); in either case, we call this subsequent contribution $r_{S}^{inv}$. On the other hand, for fully building $((\widehat{T}^{-1})_{\textrm{b}3\textrm{diag}})_{12}$ and $((\widehat{T}^{-1})_{\textrm{b}3\textrm{diag}})_{21}$ we need some further computations:
$$ r_{12}(s_{2}) := 4s_{2}\frac{n_{tasks}}{n_{threads}}, \ r_{21} := 4\frac{n_{tasks}}{n_{threads}} . $$
Finally, $((\widehat{T}^{-1})_{\textrm{b}3\textrm{diag}})_{22}$ requires the following extra operations:
$$ r_{22}(s_{2}) := \frac{n_{tasks}}{n_{threads}}(4(s_{2}-1) + 2s_{2}) . $$
The total parallel cost for one level of DDRGF is the addition of all of the above:
\begin{equation}\label{eq:cost_model_DDRGF}
    r_{DDRGF}(s_{2},b_{s}) = r_{22}^{inv}(s_{2},b_{s}) + (r_{11}^{12}(s_{2})+r_{11}^{21}(s_{2})) + r_{S} + (r_{12}(s_{2}) + r_{21}) + r_{22}(s_{2}) + r_{S}^{inv}(\ell_{S},b_{s}) ,
\end{equation}
where the number of principal layers composing the next-level Schur complement $\widehat{T}_{S}$ is
$$ \ell_{S} = n_{tasks} \cdot s_{1} = \left\lceil\frac{\ell}{s_{1} + s_{2}} \right\rceil s_{1}. $$
This cost model allows us to explicitly analyze the concurrency-to-complexity trade-off in DDRGF. The larger the value of $s_{2}$ is, the more aggressive the domain coarsening becomes. This rapidly shrinks $\ell_{S}$, leading to a much cheaper Schur complement inversion ($r_{S}^{inv}$), and effectively reducing the necessary depth of the recursion. However, this mathematical compression comes with two penalties: first, a smaller $\ell_S$ inherently generates fewer concurrent tasks, directly reducing the available parallelism. Second, enlarging $s_{2}$ thickens the local dense matrix operations within the sub-domains, drastically increasing the local sequential cost terms (particularly $r_{22}^{inv}$). To achieve optimal performance, one must carefully tune the sequence of $s_2$ values across the recursion levels to balance these opposing forces.

\section{Numerical experiments}\label{sec:NumericalExperiments}
We now present numerical experiments studying and comparing the scalability, and change under several parameters, of the various methods presented and developed in the previous sections. The analysis is done with respect to four parameters: number of principal layers $\ell$, block size $b_{s}$, number of concurrent tasks $n_{tasks}$ and number of parallelizing threads $n_{threads}$ (the latter two in DDRGF only, see sect.\ \ref{sec:DDRGF}). The functional behavior of all of these methods with respect to $b_{s}$ is encapsulated through the analysis in app.\ \ref{app:microbenchmarking}, therefore we focus in this section on the dependence on the other three parameters. The focus of most of the developments presented in the previous sections are on block tridiagonal matrices, except sect.\ \ref{sec:RGF_block_ndiagonal}. Consequently, most of the numerical experiments follow the same logic focusing on tridiagonal matrices with the exception of sect.\ \ref{sec:results_rgf_ngt3}, where we extend RGF to block $n$-diagonal with $n>3$, i.e., beyond block tridiagonal. Therefore, with the formulations and developments presented here, we have covered all possible cases (i.e., sequential, parallel, general $n$), leading to a clear and unified framework for writing general implementations for Dyson for block $n$-diagonal systems for any $n=3, 5, 7, \dots$; the parallel implementation for $n>3$ will be part of a future release of our \texttt{LibNEGF.jl} library.\\
Our implementation is generic in the underlying data type, namely, one can choose either real or complex data, using double, single or even half precision\footnote{Note that, in order for the half precision execution to perform well, there needs to be support not only from the hardware but also from the underlying BLAS.}. Our numerical experiments consistently showed a factor of approximately 1.5x-1.7x reduction in the execution time when moving from complex-double down to complex-single. To keep a cleaner outlining of the results, all of the runs presented here are only for complex double data. All of our numerical experiments are run on a single node of the JUWELS Cluster Module, consisting of two Intel Xeon Platinum 8168 (Skylake-SP) CPUs (each having 24 cores) operating at 2.7 GHz, with 96 (12${\times}$8) GB DDR4. Because this architecture features two AVX-512 FMA units per core, it executes 32 FLOPs per cycle (in double precision), establishing a clear baseline for the Theoretical Peak Performance (TPP) of our benchmarks.

\subsection{RGF ($n=3$)}\label{sec:results_rgf_neq3}
The execution time of RGF, which was formulated in sect.\ \ref{sec:RGF_block_tridiagonal} by means of domain decomposition concepts, is fundamentally driven by two parameters: the number of principal layers ($\ell$) and the block size ($b_{s}$). We built diagonally-dominant synthetic matrices with underlying random data to verify that our implementation of RGF behaves as expected when both of these parameters are varied. Fig.\ \ref{fig:execTimeSequentialRGF} illustrates the resulting behavior.\\
In the left panel, the horizontal axis represents the number of principal layers $\ell$, with each curve corresponding to a different fixed block size $b_{s}$. As strictly predicted by the cost model in eq.\ \ref{eq:cost_model_RGF}, our implementation of RGF scales perfectly linearly with $\ell$.\\
Conversely, the right panel of fig.\ \ref{fig:execTimeSequentialRGF} isolates the scaling behavior with respect to the block size $b_s$ for fixed values of $\ell$. Because the RGF execution time is overwhelmingly dominated by dense matrix multiplications (GEMMs) alongside LU factorizations and triangular solves, the theoretical algorithmic complexity is strictly $O(b_s^3)$. This is reflected in the empirical data: as the block size increases towards $b_s = 512$, the measured execution time (most notably for the largest system, $\ell=160$) gets closer to the ideal cubic scaling reference line. This behavior is in agreement with the microbenchmarking observations detailed in app.\ \ref{app:microbenchmarking}, which demonstrate that these fundamental Level-3 BLAS kernels overcome memory-latency bottlenecks and successfully transition into a highly efficient, compute-bound regime at larger block dimensions.
\begin{figure}[h]
\centering
\begin{tikzpicture}
\begin{axis}[
    title={},
    xlabel={number of principal layers},
    ylabel={time (milliseconds)},
    xmin=13, xmax=180,
    ymin=7, ymax=90000,
    ymode = log, xmode = log,
    xtick={10, 20, 40, 80, 160, 200},
    xticklabels={10, 20, 40, 80, 160, 200},
    ytick={1.0, 10.0, 100.0, 391.0, 1000.0, 3250.0, 20000.0},
    yticklabels={1.0, 10.0, 100.0, 391.0, 1000.0, 3250.0, 20000.0},
    legend pos=north west,
    ymajorgrids=true,
    grid style=dashed,
    width=7.5cm,
]
{\addplot[
    color=blue,
    mark=square,
    ] coordinates {
    (20,2350.0)(40,4810.0)(80,9790.0)(160,19900.0)
    };\addlegendentry{$b_{s}$=512}}
{\addplot[
    color=green,
    mark=*,
    ] coordinates {
    (20,391.0)(40,804.0)(80,1620.0)(160,3250.0)
    };\addlegendentry{$b_{s}$=256}}
{\addplot[
    color=red,
    mark=x,
    ] coordinates {
    (20,68.1)(40,142.0)(80,292.0)(160,586.0)
    };\addlegendentry{$b_{s}$=128}}
{\addplot[
    color=orange,
    mark=+,
    ] coordinates {
    (20,11.9)(40,24.9)(80,51.8)(160,107.0)
    };\addlegendentry{$b_{s}$=64}}
\end{axis}
\end{tikzpicture}
\begin{tikzpicture}
\begin{axis}[
    title={},
    xlabel={block size $b_s$},
    ylabel={time (milliseconds)},
    xmin=45, xmax=700,
    ymin=7, ymax=90000,
    ymode = log, xmode = log,
    xtick={64, 128, 256, 512},
    xticklabels={64, 128, 256, 512},
    ytick={10.0, 100.0, 1000.0, 10000.0},
    yticklabels={10.0, 100.0, 1000.0, 10000.0},
    legend pos=north west,
    ymajorgrids=true,
    grid style=dashed,
    width=7.5cm,
]
{\addplot[
    color=blue,
    mark=square,
    ] coordinates {
    (64,107.0)(128,586.0)(256,3250.0)(512,19900.0)
    };\addlegendentry{$\ell$=160}}
{\addplot[
    color=green,
    mark=*,
    ] coordinates {
    (64,51.8)(128,292.0)(256,1620.0)(512,9790.0)
    };\addlegendentry{$\ell$=80}}
{\addplot[
    color=red,
    mark=x,
    ] coordinates {
    (64,24.9)(128,142.0)(256,804.0)(512,4810.0)
    };\addlegendentry{$\ell$=40}}
{\addplot[
    color=orange,
    mark=+,
    ] coordinates {
    (64,11.9)(128,68.1)(256,391.0)(512,2350.0)
    };\addlegendentry{$\ell$=20}}
\addplot[
    color=black,
    mark=none,
    dashed,
    thick
] coordinates {
    (64,38.8)
    (512,19900.0)
}; \addlegendentry{$O(b_s^3)$}
\end{axis}
\end{tikzpicture}
\caption{Execution time of sequential RGF with 1 BLAS thread on a single core in a node of the JUWELS Cluster Module. \textbf{Left panel:} Scaling with respect to the number of principal layers $\ell$ for fixed block sizes $b_s$. On this log-log scale, the data follows a linear trend, confirming the dependency $\log(t) = \log(f_{\ell}) + \log(\ell)$. \textbf{Right panel:} Scaling with respect to the block size $b_s$ for fixed $\ell$. The dashed black line represents the ideal theoretical algorithmic complexity of $O(b_s^3)$.
We have added axes labels with the original non-log values. Each point corresponds to an average over 30 runs of RGF.}\label{fig:execTimeSequentialRGF}
\end{figure}

\subsection{RGF ($n>3$)}\label{sec:results_rgf_ngt3}
For $n>3$, the exact theoretical cost models for the native block $n$-diagonal RGF and the fused-based method are given by eq.s \ref{eq:cost_model_general_RGF} and \ref{eq:cost_model_general_RGF_fused}, respectively. We have implemented both methods in our \texttt{LibNEGF.jl} code and run them on a single core of a JUWELS Cluster node, with the corresponding execution times and theoretical scaling curves presented in fig.\ \ref{fig:execTimesGeneralRGF}. As the block bandwidth $w$ increases, the native $n$-diagonal RGF significantly outperforms the fused approach, rapidly widening its performance lead. This empirical result validates our theoretical analysis: by preserving the native $b_s \times b_s$ block granularity, our algorithm restricts expensive arithmetic strictly to the non-zero sparsity pattern, actively avoiding the severe $O(W_s^3)$ complexity penalty incurred by the bloated super-blocks of the fused method.\\
While the theoretical curves track the measured execution times well, the physical realities of modern hardware architecture introduce hardware-induced deviations at larger bandwidths (e.g., at $w=3$, the fused super-blocks reach a massive dimension of $W_s = w \cdot b_s = 3 \times 512 = 1536$). For the native approach, the measured execution time slightly exceeds the theoretical prediction. This minor discrepancy is driven by the unmodeled overhead due to data movement in memory---such as zeroing data buffers and copying matrices---which begins to saturate the memory bandwidth and add latency as the overall memory footprint grows. Conversely, for the fused approach at $w=3$, the strict $O(N^3)$ theoretical scaling actually over-predicts the measured execution time. This occurs because the cache-aware blocking strategy utilized by OpenBLAS (the GotoBLAS architecture) successfully reclaims some computational efficiency when operating on larger, denser matrices. While this creates a hardware-dependent sweet spot where the fused model leverages dense GEMM efficiency, the fused approach ultimately computes strictly more operations; our native implementation remains fundamentally superior by avoiding structural zeros entirely.
\begin{figure}[h]
\centering
\begin{tikzpicture}
\begin{axis}[
    title={},
    xlabel={block bandwidth $w$},
    ylabel={time (seconds)},
    xmin=0.8, xmax=3.2,
    ymin=10, ymax=160,
    ymode = linear, xmode = linear,
    xtick={1, 2, 3},
    xticklabels={1, 2, 3},
    ytick={19.8, 48.2, 65.8, 90.9, 136.0},
    yticklabels={19.8, 48.2, 65.8, 90.9, 136.0},
    legend pos=north west,
    ymajorgrids=true,
    grid style=dashed,
]
{\addplot[
    color=green,
    mark=*,
    ] coordinates {
    (1,19.8)(2,65.8)(3,136.0)
    };\addlegendentry{fused}}
{\addplot[
    color=green,
    mark=none,
    dashed,
    thick,
    ] coordinates {
    (1,18.8)(2,67.5)(3,145.5)
    };\addlegendentry{fused (theory)}}
{\addplot[
    color=blue,
    mark=square,
    ] coordinates {
    (1,19.8)(2,48.2)(3,90.9)
    };\addlegendentry{RGF}}
{\addplot[
    color=blue,
    mark=none,
    dashed,
    thick,
    ] coordinates {
    (1,18.8)(2,45.1)(3,83.3)
    };\addlegendentry{RGF (theory)}}
\end{axis}
\end{tikzpicture}
\caption{Execution time for solving a block $n$-diagonal system, with either the native $n$-diagonal RGF presented in sect.\ \ref{sec:RGF_block_ndiagonal} or the fused approach with the block tridiagonal RGF solver, using 1 BLAS thread on a single core in a node of the JUWELS Cluster Module. Each point corresponds to an average over 30 runs. The system has parameters $\ell=160$ and $b_{s} = 512$. The theoretical scaling of the exact cost models for both approaches are plotted as dashed lines for direct comparison, illustrating the severe penalties of super-block padding.}
\label{fig:execTimesGeneralRGF}
\end{figure}

\subsection{DDRGF ($n=3$)}\label{sec:results_ddrgf}
In this section, we benchmark the domain decomposition based RGF method (DDRGF) introduced in sect.~\ref{sec:DDRGF}. While DDRGF shares fundamental system parameters with RGF (number of principal layers $\ell$ and block size $b_s$), its recursive, parallel nature introduces several operational parameters. Because DDRGF calls itself recursively to solve the reduced Schur complement, we define these parameters on a level-by-level basis. Let $k$ denote the current recursion level, spanning from 1 (the finest level) up to $n_{levels}$ (the deepest level, where sequential RGF is finally called). The parameters are:
\begin{itemize}
    \item $s_{1}^{(k)}$: The number of principal layers in each sub-domain of $\mathcal{D}_{1}$ at level $k$. Throughout our experiments, we uniformly fix $s_{1}^{(k)} = 1$ for all levels to maximize boundary sparsity.
    \item $s_{2}^{(k)}$: The number of principal layers in each sub-domain of $\mathcal{D}_{2}$ at level $k$. Because this dictates the coarsening rate, we can choose a different value at each depth, effectively defining a sequence $\mathbf{s}_2 = (s_2^{(1)}, s_2^{(2)}, \dots, s_2^{(n_{levels})})$.
    \item $n_{tasks}^{(k)}$: The number of independent, concurrent matrix operations generated at level $k$, mathematically given by $\lceil \ell^{(k)} / (s_{1}^{(k)} + s_{2}^{(k)}) \rceil$.
    \item $n_{threads}$: The total number of hardware threads available to embarrassingly parallelize the $n_{tasks}^{(k)}$ tasks at any given level.
\end{itemize}
\noindent We implemented DDRGF in our \texttt{LibNEGF.jl} library, using Julia's native threads for shared-memory parallelism. To prevent resource contention between Julia's task scheduler and OpenBLAS's internal thread pool, we turn off BLAS threading for all DDRGF recursion levels except the deepest sequential one. In contrast, when profiling the baseline standard RGF, we fully enable BLAS threading to ensure a fair comparison.\\
Recall that our \texttt{LibNEGF.jl} implementation features a dynamic auto-tuner (introduced in sect.~\ref{sec:RGF_block_tridiagonal}) that automatically selects the optimal recursion parameters based on the hardware. However, to practically illustrate the concurrency-to-complexity trade-off predicted by our cost model, our analysis begins with fig.\ \ref{fig:execTimeAndMemDDRGF}, where we explicitly bypass this auto-tuner and manually force specific $\mathbf{s}_2$ sequences. We hold $\ell = 1440$ and $b_{s} = 256$ constant. It is crucial to note, from this experiment, how the number of available hardware threads dictates the optimal recursion geometry.\\
In the left panel, where a high number of threads ($n_{threads}=24$) is available, the blue curve (using a thick initial coarsening $s_2^{(1)}=4$, i.e., $\mathbf{s}_2=(4,1,1,1,1)$) yields the fastest execution. By absorbing more layers early, we drastically shrink the size of the subsequent Schur complement. The heavy local arithmetic is easily digested by the 24 waiting threads, making deeper recursion (with the minimum on the blue curve at $n_{levels}=4$) highly effective. Conversely, looking at the right panel where threads are severely restricted ($n_{threads}=8$), the narrative flips. The heavy local blocks of $s_2^{(1)}=4$ become a bottleneck because there are not enough threads to process them concurrently. Instead, the orange curve ($\mathbf{s}_2=(1,1,1,1)$) becomes the superior strategy. Because thread scarcity prevents us from hiding the arithmetic overhead of domain stitching, the optimal execution point for the orange curve occurs at $n_{levels}=1$. By maintaining minimal coarsening and strictly halting at a shallow recursion depth, we avoid the mathematical penalty that parallelism cannot compensate for.\\
Because manually probing these recursion depths and domain sizes is tedious and highly hardware-dependent, we rely on the DDRGF auto-tuner for the remainder of our experiments (fig.s \ref{fig:scalingDDRGF} and \ref{fig:scalingDDRGF_2880_512}). Before the main execution, the auto-tuner probes the host architecture to measure the exact $r_{LU}(b_s)$ and $r_{GETRS}(b_s)$ hardware ratios. It feeds these directly into the analytical cost model (eq.\ \ref{eq:cost_model_DDRGF}) to dynamically predict and select the optimal $\mathbf{s}_2$ sequence and $n_{levels}$ depth for the given thread count.
\begin{figure}[h]
\centering
\begin{tikzpicture}
\begin{axis}[
    title={},
    xlabel={number of levels},
    ylabel={time (seconds)},
    xmin=0.9, xmax=5.1,
    ymin=4, ymax=11,
    ymode = linear, xmode = linear,
    xtick={1, 2, 3, 4, 5},
    xticklabels={1, 2, 3, 4, 5},
    ytick={5.0, 5.47, 5.8, 6.64, 10.0, 15.0},
    yticklabels={5.0, 5.47, 5.8, 6.64, 10.0, 15.0},
    legend pos=north east,
    ymajorgrids=true,
    grid style=dashed,
]
{\addplot[
    color=orange,
    mark=+,
    ] coordinates {
    (1,6.64)(2,5.81)(3,5.65)(4,5.51)(5,5.46)
    };\addlegendentry{$s_{2}=1,1,1,1,1$}}
{\addplot[
    color=red,
    mark=x,
    ] coordinates {
    (1,5.8)(2,5.22)(3,4.93)(4,4.88)(5,5.05)
    };\addlegendentry{$s_{2}=2,1,1,1,1$}}
{\addplot[
    color=green,
    mark=*,
    ] coordinates {
    (1,5.6)(2,5.29)(3,5.08)(4,4.99)(5,5.22)
    };\addlegendentry{$s_{2}=3,1,1,1,1$}}
{\addplot[
    color=blue,
    mark=square,
    ] coordinates {
    (1,5.47)(2,5.22)(3,5.0)(4,4.92)(5,5.09)
    };\addlegendentry{$s_{2}=4,1,1,1,1$}}
\end{axis}
\end{tikzpicture}
\begin{tikzpicture}
\begin{axis}[
    title={},
    xlabel={number of levels},
    ylabel={time (seconds)},
    xmin=0.9, xmax=4.1,
    ymin=4, ymax=11,
    ymode = linear, xmode = linear,
    xtick={1, 2, 3, 4},
    xticklabels={1, 2, 3, 4},
    ytick={5.0, 9.24, 9.91, 15.0},
    yticklabels={5.0, 9.24, 9.91, 15.0},
    legend pos=south east,
    ymajorgrids=true,
    grid style=dashed,
]
{\addplot[
    color=orange,
    mark=+,
    ] coordinates {
    (1,9.24)(2,9.25)(3,9.77)(4,9.86)
    };\addlegendentry{$s_{2}=1,1,1,1$}}
{\addplot[
    color=blue,
    mark=square,
    ] coordinates {
    (1,9.91)(2,10.0)(3,10.1)(4,10.2)
    };\addlegendentry{$s_{2}=4,1,1,1$}}
\end{axis}
\end{tikzpicture}
\caption{Execution time of DDRGF on one node of the JUWELS Cluster Module. Each curve corresponds to fixed $\mathbf{s}_2$ sequences, see the legends, where we see that $s_{2}=1$ for all levels except the finest. We have also set $s_{1} = 1$ and $\ell = 1440$. We vary the number of levels with a fixed number of Julia threads, $n_{threads} = 24$ on the left panel and $n_{threads} = 8$ on the right, with block size $b_{s} = 256$ in all cases. Each point in either panel corresponds to an average over 30 runs of DDRGF.}\label{fig:execTimeAndMemDDRGF}
\end{figure}
The exact parameter sequences chosen by the auto-tuner for our scaling runs are detailed in tab.\ \ref{tab:values_s2_DDRGF_forced_dims}. Note how the auto-tuner intelligently modulates the vector $\mathbf{s}_2$ as the available thread count shifts.\\
In fig.\ \ref{fig:scalingDDRGF}, we study the strong scaling of auto-tuned DDRGF on the JUWELS Cluster Module ($\ell=1440$, $b_{s}=256$). This plot perfectly encapsulates the central premise of our algorithm: at a low thread count (1 to 3 threads), the baseline RGF outperforms DDRGF simply because DDRGF inherently executes more total arithmetic operations to stitch the domains together. 
\begin{table}[h]
    \centering
    \begin{tabular}{ |c|c|c|c|c|c| }
     \hline
     \multirow{2}{*}{$\ell$, $b_{s}$} & \multicolumn{5}{c|}{$s_{2}$, $n_{threads}$} \\
     \cline{2-6}
      & level = 1 & level = 2 & level = 3 & level = 4 & level = 5\\
     \hline
     \multirow{6}{*}{1440, 256} & 4, 1 & -, - & -, - & -, - & -, -\\
      & 4, 3 & 1, 3 & 1, 3 & 1, 3 & 2, 3\\
      & 4, 6 & 1, 6 & 1, 6 & 2, 6 & 3, 6\\
      & 4, 12 & 1, 12 & 2, 12 & 3, 12 & 2, 4\\
      & 4, 24 & 2, 24 & 3, 24 & 2, 8 & 2, 3\\
      & 4, 48 & 1, 48 & 3, 36 & 2, 12 & 2, 4\\
     \hline
     \multirow{6}{*}{2880, 512} & 4, 1 & - & - & - & -\\
      & 4, 3 & 1, 3 & 1, 3 & 2, 3 & 3, 3\\
      & 4, 6 & 1, 6 & 2, 6 & 3, 6 & 3, 6\\
      & 4, 12 & 2, 12 & 3, 12 & 3, 12 & 2, 4\\
      & 4, 24 & 1, 24 & 2, 24 & 3, 24 & 1, 12\\
      & 4, 48 & 2, 48 & 3, 48 & 1, 24 & 1, 12\\
     \hline
    \end{tabular}
    \caption{Values, per level, of $s_{2}$ and the number of threads, dynamically set by DDRGF's auto-tuner in the runs in fig.s \ref{fig:scalingDDRGF} and \ref{fig:scalingDDRGF_2880_512}.}
    \label{tab:values_s2_DDRGF_forced_dims}
\end{table}
However, as the thread count increases, the strict sequential dependencies of standard RGF cause its performance to stagnate. Meanwhile, DDRGF effectively amortizes its extra arithmetic complexity by exploiting the exposed concurrency, overtaking RGF at 12 threads and continuing to scale efficiently up to 48 threads.\\
Finally, in fig.\ \ref{fig:scalingDDRGF_2880_512}, we double the problem size to $\ell=2880$ and $b_{s}=512$. Due to the memory limitations of a single node in the JUWELS Cluster Module for a system of this magnitude, we executed these runs on a node of the JUWELS Booster Module. While we still execute purely on CPU cores for these benchmarks, a Booster node possesses a different architecture consisting of eight NUMA domains.
\begin{figure}[h]
\centering
\begin{tikzpicture}
\begin{axis}[
    title={},
    xlabel={number of threads},
    ylabel={time (seconds)},
    xmin=-3, xmax=56,
    ymin=3.0, ymax=100,
    ymode = log, xmode = log,
    xtick={1, 3, 6, 12, 24, 48},
    xticklabels={1, 3, 6, 12, 24, 48},
    ytick={1.0, 3.57, 4.9, 7.3, 9.28, 13.3, 25.8, 32.2, 71.2, 100.0},
    yticklabels={1.0, 3.57, 4.9, 7.3, 9.28, 13.3, 25.8, 32.2, 71.2, 100.0},
    legend pos=north east,
    ymajorgrids=true,
    grid style=dashed,
]
{\addplot[
    color=blue,
    mark=+,
    ] coordinates {
    (1,71.2)(3,25.8)(6,13.3)(12,7.34)(24,4.9)(48,3.57)
    };\addlegendentry{DDRGF}
    }
{\addplot[
    color=green,
    mark=*,
    ] coordinates {
    (1,32.2)(3,15.3)(6,12.2)(12,9.28)(24,12.5)
    };\addlegendentry{RGF}
    }
\end{axis}
\end{tikzpicture}
\caption{Execution time of DDRGF on one node of the JUWELS Cluster Module, with varying number of threads. We have set $b_{s} = 256$ and $\ell=1440$. The execution time of RGF is added for comparison, where the horizontal axis indicates the number of BLAS threads in that case. Each point in either panel corresponds to an average over 30 runs of DDRGF.}\label{fig:scalingDDRGF}
\end{figure}
We observe two key phenomena in this scaling plot. First, the baseline RGF execution exhibits a severe performance degradation (a jump in execution time) starting at 12 BLAS threads. This occurs because spanning OpenBLAS threads across the dense NUMA boundaries of the Booster node incurs massive memory access latencies. DDRGF completely circumvents this bottleneck by enforcing strict memory locality within its independent, isolated domain tasks. Second, DDRGF starts ``winning'' (overtaking RGF) slightly later in the thread count compared to fig.\ \ref{fig:scalingDDRGF}. This delayed crossover is expected: the larger block size ($b_{s}=512$) cubically increases the induced arithmetic complexity in DDRGF's boundary updates, requiring a correspondingly larger pool of concurrent threads to offset the extra work.\\
While the strong scaling results in fig.s \ref{fig:scalingDDRGF} and \ref{fig:scalingDDRGF_2880_512} prove that DDRGF can successfully overcome its mathematical overhead given enough threads, one must carefully contextualize these single-node performance gains within a typical NEGF workflow. In many practical simulations, researchers evaluate transport observables over a vast grid of independent momentum ($k$) and energy ($E$) points.
\begin{figure}[h]
\centering
\begin{tikzpicture}
\begin{axis}[
    axis y line*=right,
    axis x line=none,
    title={},
    xlabel={number of threads},
    ylabel={time (seconds)},
    xmin=-3, xmax=60,
    ymin=30.0, ymax=2000,
    ymode = log, xmode = log,
    xtick={1, 3, 6, 12, 24, 48},
    xticklabels={1, 3, 6, 12, 24, 48},
    ytick={1.0, 48.9, 74.7, 143.0, 280.0, 545.0, 1623.0},
    yticklabels={1.0, 48.9, 74.7, 143.0, 280.0, 545.0, 1623.0},
    legend pos=north east,
    ymajorgrids=true,
    grid style=dashed,
]
{\addplot[
    color=orange,
    mark=*,
    ] coordinates {
    (1,1623.0)(3,545.0)(6,280.0)(12,143.0)(24,74.7)(48,48.9)
    };\addlegendentry{DDRGF}
    }
{\addplot[
    color=blue,
    mark=x,
    ] coordinates {
    (24,127.0)
    };\addlegendentry{RGF}
    }
\end{axis}
\begin{axis}[
    title={},
    xlabel={number of threads},
    ylabel={time (seconds)},
    xmin=-3, xmax=60,
    ymin=30.0, ymax=2000,
    ymode = log, xmode = log,
    xtick={1, 3, 6, 12, 24, 48},
    xticklabels={1, 3, 6, 12, 24, 48},
    ytick={1.0, 106.0, 127.0, 163.0, 241.0, 579.0},
    yticklabels={1.0, 106.0, 127.0, 163.0, 241.0, 579.0},
    legend pos=north east,
    ymajorgrids=true,
    grid style=dashed,
]
{\addplot[
    color=blue,
    mark=x,
    ] coordinates {
    (1,579.0)(3,241.0)(6,163.0)(12,106.0)(24,127.0)
    };
    }
\end{axis}
\end{tikzpicture}
\caption{Execution time of DDRGF on one node of the JUWELS Booster Module, with varying number of threads. We solve in this case a system with $\ell=2880$ and $b_{s}=512$. The execution time of RGF is added for comparison, where the horizontal axis indicates the number of BLAS threads in that case. Each point in either panel corresponds to an average over 30 runs of DDRGF.}\label{fig:scalingDDRGF_2880_512}
\end{figure}
If the device cross-section is small enough that a single RGF calculation easily fits into the memory of a single node, the most efficient resource utilization is often an embarrassingly parallel distribution of standard RGF executions across the available cores. In such high-throughput scenarios, blindly forcing all threads to cooperate on a single DDRGF calculation would unnecessarily degrade overall throughput due to the domain-stitching overhead.\\
However, the direct empirical comparisons presented here are crucial for establishing the strict performance boundaries of both methods. By precisely quantifying where DDRGF overtakes RGF, we lay the groundwork for an intelligent ``orchestrator'' mechanism within the \texttt{LibNEGF.jl} library. This orchestrator will dynamically evaluate the available hardware and system dimensions to automatically dispatch either RGF (for highly-parallel $k-E$ sweeps of smaller devices) or DDRGF (for massively parallel, constrained single-point calculations), ensuring optimal execution without blind reliance on a single algorithm.\\
Most importantly, the true necessity of DDRGF becomes absolute when scaling toward realistic, full-device simulations. As device cross-sections grow, a single block-tridiagonal inversion will eventually exceed the RAM capacity of an individual node, rendering standard RGF entirely unusable. Because DDRGF isolates domains into independent tasks, it provides the exact mathematical framework required to gracefully scale out to distributed-memory clusters (e.g., via MPI). Finally, this task-based isolation is the key to unlocking multi-GPU acceleration. While sequential RGF is fundamentally bottlenecked by the execution pipeline of a single GPU, the exposed concurrency of DDRGF allows independent domain calculations to be mapped across multiple accelerators simultaneously, unlocking extreme performance leaps that are mathematically impossible under the standard RGF formulation.

\section{Conclusions and outlook}\label{sec5}

The simulation of quantum transport in realistic nanodevices demands highly efficient solutions to the Dyson and Keldysh equations. In this work, we set out to address the scalability and flexibility limitations of the standard Recursive Green's Function (RGF) algorithm by carefully reformulating it through the mathematical lens of Domain Decomposition and Schur Complement theory. This structural re-evaluation allowed us to naturally generalize the recursive formalism to block $n$-diagonal systems ($n>3$). By processing these higher-order stencils in their native granularity, we mathematically demonstrated and empirically validated that our approach actively avoids the severe complexity penalties associated with the dense artificial super-blocks of traditional fusing strategies.\\
Furthermore, this domain decomposition framework directly enabled the development of Domain-Decomposition based RGF (DDRGF). By explicitly formalizing the block-sparse data dependencies across macroscopic device partitions, we exposed a transparent layer of concurrency obscured in the inherently sequential RGF. Alongside this algorithm, we derived hardware-aware cost models that rigorously map the fundamental trade-off between the increased arithmetic complexity of the boundary updates and the time saved via exposed parallelism. Integrated into our Julia-based \texttt{LibNEGF.jl} library, these cost models power a dynamic auto-tuner that consistently identifies the optimal concurrency strategy. Ultimately, our reformulation successfully bridges the gap between the physics-driven need for non-equilibrium observables and the rigorous demands of modern high-performance numerical linear algebra.\\
Looking forward, while our shared-memory implementation of DDRGF effectively accelerates simulations for moderate system sizes, tackling massive physical systems requires transcending the limits of a single computational node. For such large-scale problems on CPU architectures, a hybrid paradigm combining \texttt{MPI.jl} with Julia threads represents the natural next step. Consequently, we plan to extend \texttt{LibNEGF.jl} to fully support distributed memory environments, enabling seamless multi-node scaling.\\
Beyond distributed CPU environments, the DDRGF architecture is uniquely positioned to exploit GPU accelerators. Because traditional CPU-based BLAS threading is discarded in a GPU paradigm, the task-based concurrency exposed by DDRGF becomes essential for orchestrating asynchronous GPU kernel dispatches. This approach provides a robust pathway to attain extreme performance on next-generation supercomputers, such as the JUPITER exascale system. We intend to realize this by first porting the core computational kernels via \texttt{CUDA.jl}. Once this GPU foundation is established, we plan to integrate \texttt{NCCL.jl} to highly optimize multi-GPU communication across distributed nodes, followed eventually by a generalization to diverse GPU backends using \texttt{Juliana.jl}.\\
Alongside these high-performance computing optimizations, we will expand the algorithmic scope of the parallel DDRGF implementation to fully support block $n$-diagonal systems for $n>3$. Finally, while this paper focuses exclusively on the Dyson equation, we have already laid the theoretical groundwork for solving the Keldysh equation recursively via an algorithm we term Recursive Keldysh (RKD). The immediate next phase of our research will translate the domain decomposition parallelization schemes developed here and the upcoming GPU adaptations into the RKD solver, ultimately establishing a comprehensive, large-scale-ready framework for NEGF simulations.

\bmsection*{Acknowledgments}
We acknowledge financial support from the EoCoE-III project, which has received funding from the European High Performance Computing Joint Undertaking under grant agreement No. 101144014. The authors gratefully acknowledge the Gauss Centre for Supercomputing e.\ V.\ for funding this project by providing computing time through the John von Neumann Institute for Computing (NIC) on the GCS Supercomputer JUWELS at Julich Supercomputing Centre (JSC), under the project with id \texttt{mat4energy}.

\bmsection*{Conflict of interest}

The authors declare no potential conflict of interests.

\bibliography{wileyNJD-AMA}

@article{Vetsch2025,
  author = {Vetsch, N. and Maeder, A. and Maillou, V. and Winka, A. and Cao, J. and Kwasniewski, G. and Luisier, M.},
  title = {Ab-initio Quantum Transport with the {GW} Approximation, 42,240 Atoms, and Sustained Exascale Performance},
  journal = {SC '25: Proceedings of the International Conference for High Performance Computing, Networking, Storage and Analysis},
  year = {2025}
}

@article{Maillou2025,
  author = {Maillou, V. and Ziogas, A. N. and Gaedke-Merzhäuser, L. and Schenk, O. and Luisier, M.},
  title = {Parallel Selected Inversion of Block-Tridiagonal with Arrowhead Matrices},
  journal = {IEEE Cluster},
  year = {2025}
}

@article{Petersen2009,
  author = {Petersen, D. E. and Li, S. and Stokbro, K. and S{\o}rensen, H. H. B. and Hansen, P. C. and Skelboe, S. and Darve, E.},
  title = {A hybrid method for the parallel computation of Green's functions},
  journal = {Journal of Computational Physics},
  volume = {228},
  number = {14},
  pages = {5020--5039},
  year = {2009}
}

@article{Drouvelis2006,
  author = {Drouvelis, P. S. and Schmelcher, P. and Bastian, P.},
  title = {Parallel implementation of the recursive Green's function method},
  journal = {Journal of Computational Physics},
  volume = {215},
  pages = {741--756},
  year = {2006}
}

@article{Jacquelin2016,
  author = {Jacquelin, M. and Lin, L. and Yang, C.},
  title = {PSelInv: A Distributed Shared Memory Parallel Algorithm for Selected Inversion: The Symmetric Case},
  journal = {ACM Trans. Math. Softw.},
  year = {2016}
}

@article{Li2008,
  author = {Li, S. and Ahmed, S. and Klimeck, G. and Darve, E.},
  title = {Computing entries of the inverse of a sparse matrix using the FIND algorithm},
  journal = {Journal of Computational Physics},
  volume = {227},
  pages = {9408--9427},
  year = {2008}
}

@article{Kilic2013,
  author = {K{\i}l{\i}{\c{c}}, E. and Stanica, P.},
  title = {The inverse of banded matrices},
  journal = {Journal of Computational and Applied Mathematics},
  volume = {237},
  pages = {126--135},
  year = {2013}
}

@article{Jain2007,
  author = {Jain, J. and Li, H. and Cauley, S. and Koh, C. and Balakrishnan, V.},
  title = {Numerically Stable Algorithms for Inversion of Block Tridiagonal and Banded Matrices},
  journal = {Purdue University ECE Technical Reports},
  year = {2007}
}

@article{Chavez2018,
  author = {Chavez, G. and Turkiyyah, G. and Zampini, S. and Ltaief, H. and Keyes, D.},
  title = {Accelerated Cyclic Reduction: A distributed-memory fast solver for structured linear systems},
  journal = {Parallel Computing},
  volume = {74},
  pages = {65--83},
  year = {2018}
}

@article{Zhao2018,
  title={Nested dissection solver for transport in 3D nano-electronic devices},
  author={Zhao, Y and Hetmaniuk, U and Patil, SR and Qi, J and Anantram, MP},
  journal={Journal of Computational Electronics},
  volume={15},
  number={2},
  pages={708--720},
  year={2016},
  publisher={Springer}
}

@article{Hetmaniuk2013,
  author = {Hetmaniuk, U. and Zhao, Y. and Anantram, M. P.},
  title = {A Nested Dissection Approach to Modeling Transport in Nanodevices},
  journal = {International Journal for Numerical Methods in Engineering},
  year = {2013}
}

@book{Datta2005,
  author = {Datta, S.},
  title = {Quantum Transport: Atom to Transistor},
  publisher = {Cambridge University Press},
  year = {2005}
}

@article{Luisier2006,
  author = {Luisier, M. and Schenk, A. and Fichtner, W. and Klimeck, G.},
  title = {Atomistic simulation of nanowires in the $sp^3d^5s^*$ tight-binding formalism: From boundary conditions to strain calculations},
  journal = {Phys. Rev. B},
  volume = {74},
  pages = {205323},
  year = {2006}
}

@article{d1990conductance,
  title={Conductance of a disordered linear chain including inelastic scattering events},
  author={d’Amato, Jorge L and Pastawski, Horacio M},
  journal={Physical Review B},
  volume={41},
  number={11},
  pages={7411},
  year={1990},
  publisher={APS}
}

@article{mackinnon1985calculation,
  title={The calculation of transport properties and density of states of disordered solids},
  author={MacKinnon, A},
  journal={Zeitschrift f{\"u}r Physik B Condensed Matter},
  volume={59},
  number={4},
  pages={385--390},
  year={1985},
  publisher={Springer}
}

@article{svizhenko2002two,
  title={Two-dimensional quantum mechanical modeling of nanotransistors},
  author={Svizhenko, A and Anantram, MP and Govindan, TR and Biegel, B and Venugopal, R},
  journal={Journal of Applied Physics},
  volume={91},
  number={4},
  pages={2343--2354},
  year={2002},
  publisher={American Institute of Physics}
}

@article{svizhenko2002role,
  title={Role of scattering in nanotransistors},
  author={Svizhenko, Alexei and Anantram, MP},
  journal={arXiv preprint cond-mat/0211069},
  year={2002}
}

@article{goto2008anatomy,
  title={Anatomy of high-performance matrix multiplication},
  author={Goto, Kazushige and Geijn, Robert A van de},
  journal={ACM Transactions on Mathematical Software (TOMS)},
  volume={34},
  number={3},
  pages={1--25},
  year={2008},
  publisher={ACM New York, NY, USA}
}

@software{LibNEGF_jl_2026_20043546,
  author       = {Conrads, Christoph and
                  Di Napoli, Edoardo Angelo and
                  Pecchia, Alessandro and
                  Ramirez-Hidalgo, Gustavo and
                  Robeyns, Matthieu},
  title        = {SimLabQuantumMaterials/LibNEGF.jl: LibNEGF.jl
                   v0.2.0
                  },
  month        = may,
  year         = 2026,
  publisher    = {Zenodo},
  version      = {v0.2.0},
  doi          = {10.5281/zenodo.20043546},
  url          = {https://doi.org/10.5281/zenodo.20043546},
}

@article{mackinnon85,
  author    = {MacKinnon, A.},
  title     = {The calculation of transport properties and density of states of disordered solids},
  journal   = {Zeitschrift f{\"u}r Physik B Condensed Matter},
  volume    = {59},
  number    = {4},
  pages     = {385--390},
  year      = {1985}
}

@article{peters09,
  author    = {Peters, A. and others},
  title     = {Efficient and parallel strategy for the inversion of block tridiagonal matrices},
  journal   = {Journal of Computational Physics},
  volume    = {228},
  number    = {14},
  pages     = {5020--5035},
  year      = {2009}
}

@article{meurant12,
  author    = {Meurant, G.},
  title     = {Numerically stable algorithms for inversion of block tridiagonal and banded matrices},
  journal   = {Numerical Linear Algebra with Applications},
  volume    = {19},
  number    = {3},
  pages     = {541--556},
  year      = {2012}
}

@article{somasunderam12,
  author    = {Somasunderam, N. and Chandrasekaran, S.},
  title     = {On the infinitesimal limits of the Schur complements of tridiagonal matrices},
  journal   = {Linear Algebra and its Applications},
  volume    = {436},
  number    = {3},
  pages     = {659--681},
  year      = {2012}
}

@article{osorghin24,
  author    = {Osorghin, N. and others},
  title     = {Massively parallel quantum transport simulations},
  journal   = {arXiv preprint arXiv:2405.14288},
  year      = {2024}
}

@article{stuckermann25,
  author    = {Stuckermann, F. and others},
  title     = {High-throughput quantum transport},
  journal   = {Nature Computational Science},
  volume    = {5},
  pages     = {123--134},
  year      = {2025}
}

@article{afzalian17,
  author    = {Afzalian, A. and others},
  title     = {Distributed non-equilibrium Green's function algorithms},
  journal   = {arXiv preprint arXiv:1702.05167},
  year      = {2017}
}

@article{jiang11,
  author    = {Jiang, L. and others},
  title     = {Parallel algorithms for the Green's function in quantum transport},
  journal   = {Journal of Applied Physics},
  volume    = {110},
  number    = {4},
  pages     = {043713},
  year      = {2011}
}

@article{Hirt1974,
      author = "Hirt, C W and Amsden, A A and Cook, J L",
       title = "An arbitrary {L}agrangian-{E}ulerian computing method for all flow speeds",
        year = "1974",
     journal = "J {C}omput {P}hys",
      volume = "14",
      number = "3",
       pages = "227--253"
}

@article{Benson1992,
      author = "Benson, D J",
       title = "Computational methods in {L}agrangian and {E}ulerian hydrocodes",
        year = "1992",
     journal = "Comput {M}ethod {A}ppl {M}",
      volume = "99",
      number = "2--3",
       pages = "235--394"
}

@article{Dukowicz1984,
      author = "Dukowicz, J",
       title = "Conservative rezoning (remapping) for general quadrilateral meshes",
        year = "1984",
     journal = "J {C}omput {P}hys",
      volume = "54",
      number = "3",
       pages = "411--424"
}

@article{Margolin2003,
      author = "Margolin, L G and Shashkov, M",
       title = "Second-order sign-preserving conservative interpolation (remapping) on general grids",
        year = "2003",
     journal = "J {C}omput {P}hys",
      volume = "184",
      number = "1",
       pages = "266--298"
}

@inproceedings{Kenamond2013,
      author = "Kenamond, M A and Burton, D E",
       title = "Exact intersection remapping of multi-material domain-decomposed polygonal meshes",
      series = "Talk at {M}ultimat 2013, {I}nternational {C}onference on {N}umerical {M}ethods for {M}ulti-{M}aterial {F}luid {F}lows",
organization = "The Organization",
     address = "San {F}rancisco",
        year = "September 2--6, 2013",
        note = "LA-UR-13-26794"
}

@inproceedings{Burton2013,
      author = "Burton, D E and Kenamond, M A and Morgan, N R and Carney, T C and Shashkov, M J and Author, A B",
       title = "An intersection based {ALE} scheme {(xALE)} for cell centered hydrodynamics {(CCH)}",
      series = "Talk at {M}ultimat 2013, {I}nternational {C}onference on {N}umerical {M}ethods for {M}ulti-{M}aterial {F}luid {F}lows",
     address = "San {F}rancisco",
organization = "The Organization", 
        year = "September 2--6, 2013",
        note = "LA-UR-13-26756.2"
}

@article{Berndt2011,
      author = "Berndt, M and Breil, J and Galera, S and Kucharik, M and Maire, P H and Shashkov, M",
       title = "Two-step hybrid conservative remapping for multimaterial arbitrary {L}agrangian-{E}ulerian methods",
        year = "2011",
     journal = "J {C}omput {P}hys",
      volume = "230",
      number = "17",
       pages = "6664--6687"
}

@article{Kucharik2012,
      author = "Kucharik, M and Shashkov, M",
       title = "One-step hybrid remapping algorithm for multi-material arbitrary {L}agrangian-{E}ulerian methods",
        year = "2012",
     journal = "J {C}omput {P}hys",
      volume = "231",
      number = "7",
       pages = "2851--2864"
}

@article{Breil2015,
      author = "Breil, J and Alcin, H and Maire, P H",
       title = "A swept intersection-based remapping method for axisymmetric {ReALE} computation",
        year = "2015",
     journal = "Int {J} {N}umer {M}eth {F}l",
      volume = "77",
      number = "11",
       pages = "694--706",
        note = "Fld.3996"
}

@incollection{Barth1997,
      author = "Barth, T J",
       title = "Numerical methods for gasdynamic systems on unstructured meshes",
   booktitle = "An {I}ntroduction to {R}ecent {D}evelopments in {T}heory and {N}umerics for {C}onservation {L}aws, {P}roceedings of the {I}nternational {S}chool on {T}heory and {N}umerics for {C}onservation {L}aws",
      editor = "Kroner, D and Rohde, C and Ohlberger, M",
      series = "Lecture {N}otes in {C}omputational {S}cience and {E}ngineering",
	  edition = "2",
   publisher = "Springer",
        year = "1997",
}

@article{Liska2010,
      author = "Liska, R and Shashkov, M and Vachal, P and Wendroff, B and Author, A B and Author, B B and Author, C C",
       title = "Optimization-based synchronized flux-corrected conservative interpolation (remapping) of mass and momentum for arbitrary {L}agrangian-{E}ulerian methods",
        year = "2010",
     journal = "J {C}omput {P}hys",
      volume = "229",
      number = "5",
       pages = "1467--1497"
}

@article{Kucharik2003,
      author = "Kucharik, M and Shashkov, M and Wendroff, B",
       title = "An efficient linearity-and-bound-preserving remapping method",
        year = "2003",
     journal = "J {C}omput {P}hys",
      volume = "188",
      number = "2",
       pages = "462--471"
}

@misc{Blanchard2015,
      author = "Blanchard, G and Loubere, R",
       title = "High-Order {C}onservative {R}emapping with a posteriori {MOOD} stabilization on polygonal meshes",
        year = "2015",
howpublished = "Details on how published",
	note = "Accessed January 13, 2016. \url{https://hal.archives-ouvertes.fr/hal-01207156}, the {HAL} {O}pen {A}rchive, hal-01207156."
}

@article{Lauritzen2011,
      author = "Lauritzen, P and Erath, C and Mittal, R",
       title = "On simplifying `incremental remap'-based transport schemes",
        year = "2011",
     journal = "J {C}omput {P}hys",
      volume = "230",
      number = "22",
       pages = "7957--7963"
}

@article{Klima2017,
      author = "Klima, M and Kucharik, M and Shashkov, M",
       title = "Local error analysis and comparison of the swept- and intersection-based remapping methods",
        year = "2017",
     journal = "Commun {C}omput {P}hys",
      volume = "21",
      number = "2",
       pages = "526--558"
}

@article{Dukowicz2000,
      author = "Dukowicz, J K and Baumgardner, J R",
       title = "Incremental remapping as a transport/advection algorithm",
        year = "2000",
     journal = "J {C}omput {P}hys",
      volume = "160",
      number = "1",
       pages = "318--335"
}

@incollection{Kucharik2011,
      author = "Kucharik, M and Shashkov, M",
       title = "Flux-based approach for conservative remap of multi-material quantities in {2D} arbitrary {L}agrangian-{E}ulerian simulations",
   booktitle = "Finite {V}olumes for {C}omplex {A}pplications {VI} {P}roblems \& {P}erspectives",
      editor = "Fo\v{r}t, J and F{\"{u}}rst, J and Halama, J and Herbin, R and Hubert, F",
      series = "Springer {P}roceedings in {M}athematics",
	 edition = "1",
   publisher = "Springer",
        year = "2011",
       pages = "623--631"
}

@article{Kucharik2014,
      author = "Kucharik, M and Shashkov, M",
       title = "Conservative multi-material remap for staggered multi-material arbitrary {L}agrangian-{E}ulerian methods",
        year = "2014",
     journal = "J {C}omput {P}hys",
      volume = "258",
       pages = "268--304"
}

@article{Loubere2005,
      author = "Loubere, R and Shashkov, M",
       title = "A subcell remapping method on staggered polygonal grids for arbitrary-{L}agrangian-{E}ulerian methods",
        year = "2005",
     journal = "J {C}omput {P}hys",
      volume = "209",
      number = "1",
       pages = "105--138"
}

@techreport{Margolin2002,
      author = "Margolin, L G and Shashkov, M",
       title = "Second-order sign-preserving remapping on general grids",
     address = "The address",
 institution = "Los {A}lamos {N}ational {L}aboratory",
        year = "2002",
      number = "Technical Report LA-UR-02-525"
}

@inproceedings{Mavriplis2003,
      author = "Mavriplis, D J",
       title = "Revisiting the least-squares procedure for gradient reconstruction on unstructured meshes",
        year = "June 23--26, 2003",
      series = "AIAA 2003-3986. 16th {AIAA} {C}omputational {F}luid {D}ynamics {C}onference",
organization = "The organization",
     address = "Orlando, {F}lorida"
}

@article{Scovazzi2008,
      author = "Scovazzi, G and Love, E and Shashkov, M",
       title = "Multi-scale {L}agrangian shock hydrodynamics on {Q1/P0} finite elements: {T}heoretical framework and two-dimensional computations",
        year = "2008",
     journal = "Comput {M}ethod {A}ppl {M}",
      volume = "197",
      number = "9--12",
       pages = "1056--1079"
}

@article{Caramana1998,
      author = "Caramana, E J and Shashkov, M J",
       title = "Elimination of artificial grid distortion and hourglass-type motions by means of {L}agrangian subzonal masses and pressures",
        year = "1998",
     journal = "J {C}omput {P}hys",
      volume = "142",
      number = "2",
       pages = "521--561"
}

@techreport{Hoch2009,
      author = "Hoch, P",
       title = "An arbitrary {L}agrangian-{E}ulerian strategy to solve compressible fluid flows",
      number = "Technical {R}eport",
 institution = "CEA",
     address = "The address",
        year = "2009",
        note = "Accessed January 13, 2016. HAL: hal-00366858. https://hal.archives-ouvertes.fr/docs/00/36/68/58/PDF/ale2d.pdf."
}

@book{Shashkov1996,
      author = "Shashkov, M",
       title = "Conservative {F}inite-{D}ifference {M}ethods on {G}eneral {G}rids",
   publisher = "CRC {P}ress",
        year = "1996"
}

@article{Knupp1999,
      author = "Knupp, P M",
       title = "Winslow smoothing on two-dimensional unstructured meshes",
        year = "1999",
     journal = "Eng {C}omput",
      volume = "15",
       pages = "263--268"
}

@techreport{kamm2000,
      author = "Kamm, J",
       title = "Evaluation of the {S}edov-von {N}eumann-{T}aylor blast wave solution",
 institution = "Los {A}lamos {N}ational {L}aboratory",
     address = "The address",
        year = "2000",
      number = "Technical {R}eport LA-UR-00-6055"
}

@article{Taylor1937,
      author = "Taylor, G I and Green, A E",
       title = "Mechanism of the production of small eddies from large ones",
        year = "1937",
     journal = "P {R}oy {S}oc {L}ond {A} {M}at",
      volume = "158",
      number = "895",
       pages = "499--521",
        note = "\url{https://doi.org/10.1098/rspa.1937.0036}, \url{http://rspa.royalsocietypublishing.org/content/158/895/499}"
}

@article{Yin2025,
  author = {Yin, J. and Ibrahim, K. Z. and Del Ben, M. and Deslippe, J. and Chan, Y. and Yang, C.},
  title = {GPU Acceleration of Non-equilibrium Green's Function Calculation using OpenACC and CUDA FORTRAN},
  journal = {arXiv preprint arXiv:2505.19467},
  year = {2025}
}

@article{Godoy1994,
  author = {Godoy, A. and Svizhenko, A. and Anantram, M. P.},
  title = {Recursive Green's Function Method},
  journal = {Phys. Rev. B},
  year = {1994}
}

@article{Somasunderam2012,
  author = {Somasunderam, N. and Chandrasekaran, S.},
  title = {On the infinitesimal limits of the Schur complements of tridiagonal matrices},
  journal = {Linear Algebra and its Applications},
  volume = {436},
  pages = {659--681},
  year = {2012}
}

@article{Gichev1995,
  author = {Gichev, D.},
  title = {Parallel Inversion of Block Tridiagonal Matrices},
  journal = {Mathematica Balkanica},
  volume = {9},
  number = {2-3},
  year = {1995}
}

@article{Petersen2008,
  author = {Petersen, D. E. and S{\o}rensen, H. H. B. and Hansen, P. C. and Skelboe, S. and Stokbro, K.},
  title = {Block tridiagonal matrix inversion and fast transmission calculations},
  journal = {Journal of Computational Physics},
  volume = {227},
  pages = {3174--3190},
  year = {2008}
}

@article{Gaury2014,
  author = {Gaury, B. and Weston, J. and Santin, M. and Houzet, M. and Groth, C. and Waintal, X.},
  title = {Numerical simulations of time-resolved quantum electronics},
  journal = {Physics Reports},
  volume = {534},
  pages = {1--37},
  year = {2014}
}

@article{Mahmood1991,
  author = {Mahmood, A. and Lynch, D. J. and Philipp, L. D.},
  title = {A Fast Banded Matrix Inversion Using Connectivity of Schur's Complements},
  journal = {Washington State University Technical Report},
  year = {1991}
}

@article{Grady2019,
	author = "Grady, J. S. and Her, M. and Moreno, G. and Perez, C. and Yelinek, J.",
	title = "Emotions in storybooks: A comparison of storybooks that represent ethnic and racial groups in the United States",
	year = "2019", 
	journal = "Psychology of Popular Media Culture",
	volume = "8",
	number = "3", 
	pages = "207--217"
}

@article{Grady2020,
	author = "Grady, J. S. and Her, M. and Moreno, G. and Perez, C. and Yelinek, J.",
	title = "Emotions in storybooks: A comparison of storybooks that represent ethnic and racial groups in the United States",
	year = "2020",
	journal = "Psychology of Popular Media Culture",
	volume = "8",
	number = "3", 
	pages = "207--217",
	note = "\url{https://doi.org/10.1037/ppm0000185}"
}

@article{Light2008,
	author = "Light, M. A. and Light, I. H.",
	title = "The geographic expansion of Mexican immigration in the United States and its implications for local law enforcement",
	year = "2008",
	journal = "Law Enforcement Executive Forum Journal",
	volume = "8",
	number = "1",
	pages = "73--82"
}

@article{Ledebur2007,
	author = "Von Ledebur, S. C.",
	title = "Optimizing knowledge transfer by new employees in companies",
	year = "2007",
	journal = "Knowledge Management Research \& Practice",
	note = "Advance online publication. \url{https://doi.org/10.1057/palgrave.kmrp.8500141}"
}

@article{Sanchiz2017,
	author = "Sanchiz, M. and Chevalier, A. and Amadieu, F.",
	title = "How do older and young adults start searching for information? Impact of age, domain knowledge and problem complexity on the different steps of information searching", 
	year = "2017",
	journal = "Computers in Human Behavior", 
	volume = "72", 
	pages = "67--78",
	note = "\url{https://doi.org/10.1016/j.chb.2017.02.038}"
}

@magazine{Lyons2009,
      author = "Lyons, D.",
       title = "Don't ‘iTune’ us: It’s geeks versus writers. {G}uess who’s winning",
        year = "2009",
		month = "June 15",
     journal = "Newsweek",
      volume = "153",
      number = "24",
       pages = "27"
}

@book{Jackson2019, 
	author = "Jackson, L. M.",
	title = "The psychology of prejudice: From attitudes to social action",
	year = "2019",
	edition = "2",
	publisher = "American Psychological Association",
	note = "\url{https://doi.org/10.1037/0000168-000}"
}

@book{Sapolsky2017,
	author = "Sapolsky, R. M.",
	title = "Behave: The biology of humans at our best and worst",
	year = "2017",
	publisher = "Penguin Books"
}

@book{Hygum2010,
	editor = "Hygum, E. and Pedersen, P. M.",
	title = "Early childhood education: Values and practices in Denmark",
	year = "2010",
	publisher = "Hans Reitzels Forlag",
	note = "\url{https://earlychildhoodeducation.digi.hansreitzel.dk/}"
}

@book{Watson2013,
	author = "Watson, J. B. and Rayner, R.",
	title = "Conditioned emotional reactions: The case of Little Albert",
	year = "2013",
	editor = "D. Webb", 
	publisher = "CreateSpace Independent Publishing Platform",
	note = "\url{http://a.co/06Se6Na (Original work published 1920)}"
}

@inproceedings{Aron2019,
	author = "Aron, L. and Botella, M. and Lubart, T.",
	title = "Culinary arts: Talent and their development",
	year = "2019",
	editor = "R. F. Subotnik and P. Olszewski-Kubilius and F. C. Worrell", 
	booktitle = "The psychology of high performance: Developing human potential into domain-specific talent",
	publisher = "American Psychological Association",
	pages = "345--359",
	note = "\url{https://doi.org/10.1037/0000120-016}"
}

@inproceedings{Dillard2020,
	author = "Dillard, J. P.",
	title = "Currents in the study of persuasion",
	year = "2020",
	editor = "M. B. Oliver and A. A. Raney and J. Bryant",
	booktitle = "Media effects: Advances in theory and research",
	edition = "4",
	publisher = "Routledge",
	pages = "115--129" 
}

@techreport{Terry2010,
      author = "Terry, M.A. and Lopez, F. M.",
       title = "Racism and poverty in the Bay Area",
 institution = "Embarcadero Institute",
        year = "2010",
      number = "Research Report No. 10.4",
	  note = "\url{http://www.bayarearesearch.org}"
}

@inproceedings{Abrams1973,
	author = "Abrams, D. M.",
	booktitle = "Conductive Polymers", 
	editor = "Seymour, R. S. and Smith, A.",
	publisher = "Springer",
	address = "Berlin Heidelberg New York", 
	year = "1973", 
	pages = "307"
}

@book{Ibach1996,
	author = "Ibach, H. and Lüth, H.",
	title = "Solid-State Physics,", 
	edition = "2",
	publisher = "Springer",
	address = "New York",
	year = "1996"
}

@inproceedings{MacKay1973,
	author = "MacKay, D. M.",
	booktitle = "Handbook of Sensory Physiology",
	editor = "Jung, R. and MacKay,  D.M.",
	publisher = "Springer",
	address = "Heidelberg",
	year = "1973", 
	pages = "307"
}

@inbook{Smith1976,
	author = "Smith, S. E.",
	booktitle = "Neuromuscular Junction",
	editor = "Zaimis., E.",
	series = "Handbook of Experimental Pharmacology", 
	volume = "42",
	publisher = "Springer", 
	address = "Heidelberg",
	year = "1976", 
	pages = "593"
}

@inbook{Zowghi1996,
	author = "Zowghi, D. and et al.", 
	booktitle = "PRICAI '96: Topics in Artificial Intelligence",
	editor = "Foo, N. and Goebel., R.", 
	series = "4th Pacific Rim Conference on Artificial Intelligence, Cairns, August 1996. Lecture Notes in Computer Science. Lecture notes in artificial intelligence",
	volume = "1114",
	publisher = "Springer",
	address = "Heidelberg", 
	year = "1996", 
	pages = "157"
}

@proceedings{Chung1978,
	author = "Chung, S. T. and Morris, R. L.",
	booktitle = "Abstracts of the 3rd International Symposium on the Genetics of Industrial Microorganisms", 
	publisher = "University of Wisconsin", 
	address = "Madison", 
	year = "4–9 June 1978"
}

@article{LaSalle2017,
      author = "LaSalle, Peter and Jose V. Lopez",
       title = "Conundrum: A Story about Reading",
        year = "2017",
     journal = "New England Review",
      volume = "38",
      number = "1",
       pages = "95--109"
}

@article{Campbell2015,
      author = "Campbell, Alexandra M. and Jay Fleisher and Christopher Sinigalliano and James R. White and Jose V. Lopez",
       title = "Dynamics of Marine Bacterial Community Diversity of the Coastal Waters of the Reefs, Inlets, and Wastewater Outfalls of Southeast Florida",
        year = "2015",
     journal = "Microbiology Open",
      volume = "4",
      number = "2",
       pages = "1--19",
	   note = "\url{https://doi.org/10.1002/mbo3.245}"
}

@article{Spieler1971,
      author = "Spieler, Richard E. and James R. White",
       title = "A Carp-Goldfish Hybrid with No Caudal Fin",
        year = "1971",
     journal = "Transactions of the Kansas Academy of Science",
      volume = "74",
      number = "3/4",
       pages = "342--43",
	   note = "\url{http://nsuworks.nova.edu/occ_facarticles/215/}"
}

@article{Campbell2019,
		author = "Campbell, Jesse W. and Richard E.",
        title = "Obtrusive, Obstinate and Conspicuous: Red Tape from a Heideggerian Perspective",
        year = "2019",
     journal = "International Journal of Organizational Analysis",
      volume = "27",
      number = "5",
       pages = "1657--72"
}

@magazine{Stolberg2010,
	author = "Stolberg, Sheryl Gay and Robert Pear",
	title = "Wary Centrists Posing Challenge in Health Care Vote",
	year = "2010",
	journal = "New York Times",
	date = "February 28",
	note = "\url{http://www.nytimes.com/2010/02/28/us/politics/28health.html}"
}

@article{Kossinets2009,
      author = "Kossinets, Gueorgi and Duncan J. Watts",
       title = "Origins of Homophily in an Evolving Social Network",
        year = "2009",
     journal = "American Journal of Sociology",
      volume = "~115",
       pages = "405--50",
        note = "\url{https://doi.org/10.1086/599247}"
}

@paper{Ku2000,
	author = "Ku, Bhatt and Helen Bake",
	title = "Re-reading the ‘167 event’: The Politics of Numbers and the Making of Hong Kong ‘Others’",
	year = "2000",
	number = "54",
	howpublished = "~Twelfth Annual Meeting on Socioeconomics",
	address = "London",
	date = "July 7--10"
}

@inproceedings{Chiswick1977,
      author = "Chiswick, Bake R.",
       title = "A Longitudinal Analysis of the Occupational Mobility of Immigrants",
   booktitle = "Proceedings of the 30th Annual Winter Meetings, Industrial Relations Research Association",
      editor = "Barbara D. Dennis",
     address = "20–7 Madison",
   publisher = "WI: IRRA",
        year = "1977"
}

@misc{Royko1992,
	author = "Royko, Mike.",
	title = "Next Time, Dan, Take Aim at Arnold",
	year = "1992",
	howpublished = "Chicago Tribune",
	date = "September 23, 1992"
}

@misc{Pai2017,
	author = "Pai, Tanya.",
	title = "The Squishy, Sugary History of Peeps",
	year = "2017",
	howpublished = "Vox",
	date = "April 11, 2017",
	note = "\url{http://www.vox.com/culture/2017/4/11/15209084/peeps-easter}"
}

@phdthesis{Rutz2013,
	author = "Rutz, Cynthia Lillian.",
	title = "King Lear and Its Folktale Analogues",
	year = "2013",
	series = "PhD diss.",
	publisher = "University of Chicago"
}

@mastersthesis{Pruzinsky2018,
	author = "Pruzinsky, Nina.",
	title = "Identification and Spatiotemporal Dynamics of Tuna (Family: Scombridae; Tribe: Thunnini) Early Life Stages in the Oceanic Gulf of Mexico",
	year = "2018",
	series = "MS thesis",
	publisher = "Nova Southeastern University",
	note = "\url{https://nsuworks.nova.edu/occ_stuetd/472/}"
}

@book{Pollan2006,
      author = "Pollan, Michael.",
       title = "The Omnivore's Dilemma: A Natural History of Four Meals",
	    year = "2006",
   publisher = "Penguin",
     address = "New York"
}

@book{Heatherton2007,
	author = "Heatherton, Joyce and James Fitzgilroy and Jackson Hsu",
	title = "Meteors and Mudslides: A Trip through",
	year = "2007",
	address = "New York",
	publisher = "Knopf"
}

@book{Purkis2011,
	author = "Purkis, Samuel and Victor Klemas",
	title = "Remote Sensing and Global Environmental Change",
	year = "2011",
	address = "Oxford",
	publisher = "Wiley-Blackwell"
}

@booked{Woodward1987,
	author = "Woodward, David, ed.",
	title = "Art and Cartography: Six Historical Essays",
	year = "1987",
	publisher = "University of Chicago Press",
	address = "Chicago"
}

@book{Soloviev2006,
	author = "Soloviev, Alexander and Roger Lukas",
	title = "The Near-Surface Layer of the Ocean: Structure, Dynamics and Applications",
	editor = "Lawrence A. Mysak and Kevin Hamilton",
	year = "2006",
	publisher = "Springer",
	address = "Dordrecht"
}

@inproceedings{Messing2008,
	author = "Messing, Charles G. and John K. Reed and Sandra D. Brooke and Steve W. Ross",
	title = "Deep-Water Coral Reefs of the United States",
	editor = "Bernhard M. Riegl and Richard E. Dodge",
	year = "2008",
	booktitle = "Coral Reefs of the USA",
	publisher = "Springer",
	address = "Dordrecht",
	pages = "767--92"
}

@incollection{Hodgson1876,
	author = "Hodgson, Shadworth.",
	title = "The Pre-Suppositions of Miracles",
	editor = "Catherine Marshall, Bernard Lightman, Richard England",
	booktitle = "The Papers of the Metaphysical Society, 1869–1880: A Critical Edition",
	volume = "2",
	publisher = "OUP",
	address = "Oxford",
	year ="1876",
	pages = "373--92"
}

@article{Keng2017,
      author = "Keng, Shao-Hsun and Chun-Hung Lin and Peter F. Orazem",
       title = "Expanding College Access in Taiwan, 1978–2014: Effects on Graduate Quality and Income Inequality",
        year = "2017",
     journal = "Journal of Human Capital",
      volume = "11",
	  number = "1, Spring",
       pages = "1--34"
}

@book{Jacobs2011,
	author = "Alan Jacobs",
	title = "The Pleasures of Reading in an Age of Distraction",
	publisher = "Oxford UP",
	year = "2011"
	}

@book{Dorris1999,
	author = "Michael Dorris and Louise Erdrich ",
	title = "The Crown of Columbus",
	publisher = "HarperCollins Publishers",
	year = "1999"
	}

@book{Charon2017,
	author = "Rita Charon and Michael Dorris  and Alan Jacobs and Baron Naomi and Alan Amsden", 
	title = "The Principles and Practice of Narrative Medicine", 
	publisher = "Oxford UP", 
	year = "2017"
}

@bookeditor{Baron2007, 
      author = "Baron, Sabrina Alcorn and M. Kucharik and M. Shashkov and Jacobs Michael and Alan Jose",
       title = "Agent of Change: Print Culture Studies after Elizabeth L. Eisenstein",
   booktitle = "Finite Volumes for Complex Applications VI Problems \& Perspectives",
      editor = "Baron, Sabrina Alcorn and Kucharik, M.",
      series = "U of Massachusetts P / Library of Congress, Center for the Book",
        year = "2007"
}

@book{Milton1998,
	author = "John Milton",
	title = "The Riverside Milton",
	booktitle = "Book title for this book",
	editor = "Roy Flannagan",
	publisher = "Houghton Mifflin",
	year = "1998"
}

@book{Milkis1994,
	author = "Sidney M. Milkis and Michael Nelson",
	title = "The American Presidency: Origins and Development, 1776–1993",
	edition = "2nd",
	publisher = "CQ Press",
	year = "1994"
}

@incollection{Toorn2017,
	author = "Penny van Toorn and Daniel Justice",
	title = "Aboriginal Writing",
	booktitle = "The Cambridge Companion to Canadian Literature",
	editor = "Eva-Marie Kröller",
	publisher = "Cambridge UP",
	year = "2017",
	pages = "26–58"
}

@article{Baron2013,
      author = "Naomi S. Baron",
       title = "Redefining Reading: The Impact of Digital Communication Media",
        year = "Jan. 2013",
     journal = "PMLA",
      volume = "128",
      number = "1",
	  pages = "193–200"
}

@article{Kincaid2001,
	author = "Jamaica Kincaid",
	title = "In History",
	year = "spring 2001",
	journal = "Callaloo",
	volume = "24",
	number = "2",
	pages = "620–26",
}

@article{Kafka2007,
	author = "Ben Kafka and Barbara Adams",
	title = "The Demon of Writing: Paperwork, Public Safety, and the Reign of Terror",
	year = "2007",
	journal = "Representations",
	number = "98",
	pages = "1–24",
}

@article{Belton2008, 
	author = "John Belton and Ken Borsuk",
	title = "Painting by the Numbers: The Digital Intermediate",
	year = "spring 2008",
	journal = "Film Quarterly",
	volume = "61",
	number = "3",
	pages = "58-65"
}

@article{Helmling2006,
	author = "Steven Helmling and John Belton and Ken Borsuk and Barbara Adams",
	title = "A Martyr to Happiness: Why Adorno Matters",
	year = "2006",
	journal = "Kenyon Review",
	volume = "28",
	number = "4",
	pages = "156-72"
}

@article{Quirk2012,
	author = "Tom Quirk",
	title = "The Flawed Greatness of Huckleberry Finn",
	year = "fall 2012",
	journal = "American Literary Realism",
	volume = "45",
	number = "1",
	pages = "38--48",
	note  = "JSTOR, https://doi.org/10.5406/amerlitereal.45.1.0038"
}

@article{Goldman2010,
	author = "Anne Goldman",
	title = "Questions of Transport: Reading Primo Levi Reading Dante",
	year = "spring 2010",
	journal = "The Georgia Review",
	volume = "64",
	number = "1",
	pages = "69--88",
	note = "JSTOR, www.jstor.org/stable/41403188"
}

\bmsection*{Data and Code Availability}
The \texttt{LibNEGF.jl} library developed in this work, along with the benchmarking scripts used to generate the numerical experiments, are openly available in the GitHub repository at \url{https://github.com/SimLabQuantumMaterials/LibNEGF.jl}.

\appendix

\bmsection{Microbenchmarking}\label{app:microbenchmarking}

This section presents a precise quantitative and qualitative description of the execution time of GEMM, $t_{GEMM}$, and the ratios $r_{LU}$ and $r_{GETRS}$, in our \texttt{LibNEGF.jl} Julia implementation. Throughout this appendix, we take all the matrices to be square with dimension $N$; we have used $b_{s}$, instead of $N$, in the cost models in sect.s \ref{sec:LDU_and_RGF} and \ref{sec:DDRGF}. 

\bmsubsection{Measuring the performance of GEMM in Julia\label{app:roofline}}

The installation of Julia that we have used for all the numerical experiments in this work calls OpenBLAS, which in turn uses AVX-512 fused-multiply add instructions. We have used complex-double precision throughout the executions in this work, but our implementation is generic in the data precision. All our numerical experiments run on a single core in a node of the JUWELS Cluster Module, more specifically, on a single core of an Intel Xeon Platinum 8168 (Skylake-SP) node, which has two AVX-512 FMA units per core ($32$ flops/cycle). With these specifications in mind, we are bound by a Theoretical Peak Performance (TPP---assuming a frequency of $f = 2.7$ GHz) of $\pi_{\text{TPP}} = 86.4$ GFLOPs/s. With a DRAM of type DDR4 and 2666 MHz, we have a Peak Memory Bandwidth (PMB) of $\beta = 21.32$ GB/s.\\
We will assess now the performance of the GEMM called from within our implementation. In order to analyze the attainable performance, we need to first quantify the arithmetic intensity. The exact work for a single GEMM, assuming we are multiplying two $N \times N$ complex-double matrices, requires:
$$ w = 8N^2(N-2) \approx 8N^3 \ \text{FLOPs}, $$
as a single complex multiply-add requires 8 real FLOPs; we have used $b_s$, instead of $N$, in the cost models in sect.s \ref{sec:LDU_and_RGF} and \ref{sec:DDRGF}.\\
In the standard GEMM interface ($C = \alpha AB + \beta C$), the terms $q_C, q_B$, and $q_A$ represent the bytes transferred from main memory for matrices $C$, $B$, and $A$, respectively. The explanation of the hierarchical blocking strategy in OpenBLAS \cite{goto2008anatomy} is out of the scope of this work, but in short, it partitions the matrices into sub-blocks that fit perfectly into L1, L2, and L3 caches to maximize data reuse and minimize redundant fetches from RAM. Assuming a simplified model where data is streamed directly without reuse, we approximate the memory traffic as:
$$ q = q_C + q_B + q_A \approx (32 + 16 + 16) N^2 \ \text{Bytes} = 64 N^2 \ \text{Bytes}. $$
\noindent We note that a more accurate expression of $q$, for GEMM as done in OpenBLAS, would imply an analysis of the hierarchical approach where various loads are done from DRAM and throughout the cache hierarchy. With the expression of $q$ above, the Theoretical Arithmetic Intensity takes the form:
$$ a = \frac{w}{q} \approx \frac{8N^3 \ \text{FLOPs}}{64N^2 \ \text{Bytes}} = \frac{N}{8} \ \text{FLOPs/Byte}. $$
\noindent We write then the attainable performance as:
$$ \pi_{att.}(a) = \min\{\pi_{\text{TPP}}, a\beta\}, $$
where the $\min$ function mathematically enforces the roofline model: a kernel's performance is strictly capped by either the memory bandwidth limit ($a\beta$) or the hardware's compute ceiling ($\pi_{\text{TPP}}$). The measured performance is explicitly calculated using the exact operation count:
$$ \pi_{meas.} = \frac{8N^2(N-2) \ \text{FLOPs}}{t_{exec.}}, $$
with $t_{exec.}$ being the execution time.\\
We show, in fig.\ \ref{fig:roofline_assessment_GEMM}, how the measured and the attainable performance behave for GEMM in our implementation.
\begin{figure}[h]
    \centering
        \begin{tikzpicture}[scale=0.8]
        \begin{axis}[
            xlabel={Arithmetic Intensity (FLOPs/Byte)},
            ylabel={Performance (GFLOPs/s)},
            xmode=log,
            ymode=log,
            xmin=0.5, xmax=150,
            ymin=8, ymax=150,
            grid=both,
            major grid style={line width=.2pt,draw=gray!50},
            minor grid style={line width=.1pt,draw=gray!20},
            width=13cm,
            height=8cm,
            legend pos=south east,
            xtick={1, 2, 4, 8, 16, 32, 64, 128},
            xticklabels={
                1 ($N$=8), 
                2 ($N$=16), 
                4 ($N$=32), 
                8 ($N$=64), 
                16 ($N$=128), 
                32 ($N$=256), 
                64 ($N$=512),
                128 ($N$=1024)
            },
            xticklabel style={font=\small, rotate=45, anchor=north east},
            log ticks with fixed point,
        ]
        
        \addplot[
            color=black,
            very thick,
            domain=0.5:150,
            samples=200
        ] {min(86.4, 21.32 * x)};
        \addlegendentry{Attainable Performance}
        
        \addplot[
            color=blue,
            thick,
            dashed,
            domain=0.5:8
        ] {21.32 * x};
        \addlegendentry{Memory Bandwidth Limit}
        
        \addplot[
            color=red,
            thick,
            dashed,
            domain=2:100
        ] {86.4};
        \addlegendentry{Peak Compute ($86.4$ GFLOPs/s)}
        
        \node[
            label={270:{($4.05$, $86.4$)}}, 
            circle, 
            fill=black, 
            inner sep=1.5pt
        ] at (axis cs:4.0525,86.4) {};
        
        \addplot[
            only marks,
            mark=x,
            mark size=4pt,
            very thick,
            color=purple,
            error bars/.cd,
            y dir=both,      
            y explicit,      
            error mark=-     
        ] coordinates {(128, 83.4) +- (0, 0.71)};
        \addlegendentry{Measured Performance}
        
        \node[
            label={[purple]270:{\small ($83.4$)}}, 
            inner sep=1.5pt
        ] at (axis cs:128,83.4) {};

        \addplot[
            only marks,
            mark=x,
            mark size=4pt,
            very thick,
            color=purple,
            error bars/.cd,
            y dir=both,      
            y explicit,      
            error mark=-     
        ] coordinates {(64, 78.64) +- (0, 1.26)};
        \addlegendentry{Measured Performance}
        
        \node[
            label={[purple]270:{\small ($78.64$)}}, 
            inner sep=1.5pt
        ] at (axis cs:64,78.64) {};

        \addplot[
            only marks,
            mark=x,
            mark size=4pt,
            very thick,
            color=purple,
            error bars/.cd,
            y dir=both,      
            y explicit,      
            error mark=-     
        ] coordinates {(32, 67.24) +- (0, 1.0)};
        
        \node[
            label={[purple]270:{\small ($67.24$)}}, 
            inner sep=1.5pt
        ] at (axis cs:32,67.24) {};

        \addplot[
            only marks,
            mark=x,
            mark size=4pt,
            very thick,
            color=purple,
            error bars/.cd,
            y dir=both,      
            y explicit,      
            error mark=-     
        ] coordinates {(16, 60.66) +- (0, 1.39)};
        
        \node[
            label={[purple]270:{\small ($60.66$)}}, 
            inner sep=1.5pt
        ] at (axis cs:16,60.66) {};

        \addplot[
            only marks,
            mark=x,
            mark size=4pt,
            very thick,
            color=purple,
            error bars/.cd,
            y dir=both,      
            y explicit,      
            error mark=-     
        ] coordinates {(8, 42.1) +- (0, 2.22)};
        
        \node[
            label={[purple]270:{\small ($42.1$)}}, 
            inner sep=1.5pt
        ] at (axis cs:8,42.1) {};

        \addplot[
            only marks,
            mark=x,
            mark size=4pt,
            very thick,
            color=purple,
            error bars/.cd,
            y dir=both,      
            y explicit,      
            error mark=-     
        ] coordinates {(4, 28.0) +- (0, 3.4)};
        
        \node[
            label={[purple]270:{\small ($28.0$)}}, 
            inner sep=1.5pt
        ] at (axis cs:4,28.0) {};

        \addplot[
            only marks,
            mark=x,
            mark size=4pt,
            very thick,
            color=purple,
            error bars/.cd,
            y dir=both,      
            y explicit,      
            error mark=-     
        ] coordinates {(2, 14.67) +- (0, 5.34)};
        
        \node[
            label={[purple]270:{\small ($14.67$)}}, 
            inner sep=1.5pt
        ] at (axis cs:2,14.67) {};

        \end{axis}

        \end{tikzpicture}
    \caption{Roofline model assessment of GEMM in our \texttt{LibNEGF.jl} Julia implementation. The cross-marked points are measured performance (i.e., $\pi_{meas.}$) values; error bars have been included on top of each cross. The sample size used for these runs is 1000 for all cases except $N=1024$, with 100 for the latter.}
    \label{fig:roofline_assessment_GEMM}
\end{figure}
The memory-bound region could be made tighter towards the measured performance if we wrote a more precise expression for the arithmetic intensity $a(N)$. In particular, note that a more faithful expression for the memory traffic would give larger values in $q(N)$, leading to smaller values in $a(N)$ and then a tighter upper bound in the memory-bound region. With fig.\ \ref{fig:roofline_assessment_GEMM} we show that our Julia implementation allows us reaching the upper limits of performance from the underlying hardware.

\bmsubsection{Quantifying the $r_{LU}$ and $r_{GETRS}$ ratios\label{app:timings}}
Next to GEMM, all of the recursive methods described in this work rely on two other fundamental BLAS operations: LU and GETRS. We now run benchmarks on these three kernels to expose the dependence of their execution times ratios, $r_{LU}$ and $r_{GETRS}$, on the matrix dimension. These ratios are needed to have a complete description of the cost models of both RGF and DDRGF. The hardware on which we run these benchmarks is again a single core within a node of the JUWELS Cluster Module, with the code implemented via our \texttt{LibNEGF.jl} Julia library.\\
The results presented in fig.\ \ref{fig:r_ratios_vs_N} were produced with sample sizes of 1000 for $N \le 512$, 100 for $N = 1024$, and 10 for $N \ge 2048$; all of the used matrices contained random entries. The standard BLAS algorithms implemented in OpenBLAS perform strictly $O(N^3)$ operations. Therefore, the steep empirical scaling observed in the left panel of Figure A2 for smaller $N$ is not a change in algorithmic complexity, but rather a transition in hardware performance. As $N$ grows, the computation moves from a memory-bound regime into a compute-bound regime where AVX-512 vectorization and cache hierarchies are fully utilized.\\
To understand the asymptotic behavior of the execution time ratios $r_{LU}$ and $r_{GETRS}$, we must explicitly compare their theoretical FLOP complexities. For complex-double data:
\begin{itemize}
    \item \textbf{GEMM} ($C=AB$) requires $\approx 8N^3$ FLOPs.
    \item \textbf{LU Factorization} of an $N \times N$ matrix requires $\approx \frac{8}{3}N^3$ FLOPs.
    \item \textbf{GETRS} (solving $LU X = B$ where $B$ is an $N \times N$ matrix) requires forward and backward substitutions for $N$ right-hand sides, totaling $\approx 8N^3$ FLOPs.
\end{itemize}
\noindent In highly optimized BLAS/LAPACK libraries, block-based versions of LU and GETRS are implemented specifically to cast the bulk of their computations as Level-3 BLAS GEMM calls. Because GEMM acts as the underlying workhorse, the computational efficiency (GFLOPs/s) of LU and GETRS asymptotically approaches the maximal efficiency of GEMM for very large $N$. Consequently, their execution time ratios naturally tend toward the exact ratios of their theoretical FLOP complexities:
$$ r_{LU} \to \frac{\frac{8}{3}N^3}{8N^3} = \frac{1}{3} \approx 0.33, \quad r_{GETRS} \to \frac{8N^3}{8N^3} = 1.0. $$
\noindent In practice, due to the un-blockable overhead of pivoting, panel factorizations, and the difficulty of perfectly saturating the AVX-512 FMA units during triangular solves, the empirical limits are slightly higher. As shown in the right panel of fig.\ \ref{fig:r_ratios_vs_N}, $r_{LU}(N)$ tends to a value between $0.38$ and $0.42$, while $r_{GETRS}(N)$ tends to approximately $1.05$, perfectly aligning with these theoretical and architectural expectations.

\begin{figure}[h]
\centering
\begin{tikzpicture}[scale=0.95]
\begin{axis}[
    title={},
    xlabel={$N$},
    ylabel={time (microseconds)},
    xmin=10, xmax=700,
    ymin=0.3, ymax=20000, 
    ymode = log, xmode = log,
    xtick={16, 32, 64, 128, 256, 512},
    xticklabels={16, 32, 64, 128, 256, 512},
    ytick={2.1, 8.7, 46.4, 257.6, 1859.0, 12720.0},
    yticklabels={2.1, 8.7, 46.4, 257.6, 1859.0, 12720.0},
    legend pos=north west,
    ymajorgrids=true,
    grid style=dashed,
]
\addplot[
    color=blue,
    mark=square,
    error bars/.cd,
    y dir=both,
    y explicit
] coordinates {
    (16,2.08) +- (0,0.76)
    (32,8.72) +- (0,1.06)
    (64,46.417) +- (0,2.44)
    (128,257.5833) +- (0,5.93)
    (256,1859.0) +- (0,27.6)
    (512,12720.0) +- (0,0.2079)
};
\addplot[
    color=red,
    mark=none,
    dashed,
    thick
] coordinates {
    (16,0.388)
    (512,12720.0)
};
\end{axis}
\end{tikzpicture}
\begin{tikzpicture}[scale=0.95]
\begin{axis}[
    title={},
    xlabel={$N$},
    ylabel={ratio},
    xmin=10, xmax=4500,
    ymin=0.25, ymax=13,
    ymode = log, xmode = log,
    xtick={16, 32, 64, 128, 256, 512, 1024, 2048, 4096},
    xticklabels={16, 32, 64, 128, 256, 512, 1024, 2048, 4096},
    ytick={0.38, 1.0, 2.8, 3.13, 10.0},
    yticklabels={0.38, 1.0, 2.8, 3.13, 10.0},
    legend pos=north west,
    ymajorgrids=true,
    grid style=dashed,
]
{\addplot[
    color=green,
    mark=*,
    error bars/.cd,
    y dir=both,
    y explicit
    ] coordinates {
    (16,3.13)  +- (0,1.28)
    (32,3.10)  +- (0,0.42)
    (64,2.95)  +- (0,0.31)
    (128,2.69) +- (0,0.14)
    (256,1.97) +- (0,0.04)
    (512,1.56) +- (0,0.024)
    (1024,1.3) +- (0,0.015)
    (2048,1.09) +- (0,0.04)
    (4096,1.056) +- (0,0.01)
    };\addlegendentry{$r_{GETRS}$}}
{\addplot[
    color=blue,
    mark=square,
    error bars/.cd,
    y dir=both,
    y explicit
    ] coordinates {
    (16,2.8)  +- (0,0.87)
    (32,1.78)  +- (0,0.17)
    (64,1.27)  +- (0,0.084)
    (128,1.014) +- (0,0.027)
    (256,0.75) +- (0,0.014)
    (512,0.59) +- (0,0.014)
    (1024,0.474) +- (0,0.0074)
    (2048,0.42) +- (0,0.012)
    (4096,0.405) +- (0,0.02)
    };\addlegendentry{$r_{LU}$}}
\end{axis}
\end{tikzpicture}
\caption{Execution time of GEMM (left panel) and ratios $r_{LU}$ and $r_{GETRS}$ (right panel), as a function of the dimension $N$, on a single core in a node of the JUWELS Cluster Module. We have added axes labels with the original non-log values. Each point in either panel corresponds to a sample size of 1000 for $N \le 512$, 100 for $N = 1024$, and 10 for $N \ge 2048$; error bars have been added on top of each data point. We have added the line $t_{GEMM}(N) = f_{GEMM}N^{3}$ as a dashed straight line on the left panel, passing by (512,12720.0).}\label{fig:r_ratios_vs_N}
\end{figure}

{\color{blue}

\nocite{*}

\end{document}